\definecolor{halfgray}
{gray}{0.55}
\definecolor{webgreen}
{rgb}{0,0.4,0}
\definecolor{webbrown}
{rgb}{.8,0.1,0.1}
\definecolor{red}
{rgb}{1,0,0}
\newcommand \R {{ \mathbb R}}
\newcommand \Z {{ \mathbb Z}}
\newcommand \N {{ \mathbb N}}
\newcommand \T {{ \mathbb T}}
\newcommand \Q {{ \mathbb Q}}
\newtheorem{theorem}{Theorem}[section]
\newtheorem {lemma} [theorem]{Lemma}
\newtheorem{definition}[theorem]{Definition}
\newtheorem{conjecture}[theorem]{Conjecture}
\newtheorem{question}[theorem]{Question}
\title[Counterexamples to a rigidity conjecture ]%
{Counterexamples to a rigidity conjecture}
  \author{Giovanni Forni}
  \author{Adam Kanigowski}
\address{Department  of Mathematics\\
  University of Maryland \\
  College Park, MD USA}
\email
    {gforni@math.umd.edu}
\email{akanigow@umd.edu}
\keywords {Geometric Rigidity, Diophantine condition,  invariant distributions, toral rotations, horocycle flows, nilflows}
\subjclass[2010]{37C15, 37E20, 37E45}
\date{\today}
\begin{document}

\def\echo#1{\relax}
    
\begin{abstract}
  \begin{sloppypar}
 We discuss several counterexamples to a rigidity conjecture of K.~Khanin \cite{Kha18}, which states that under some quantitative condition on non-existence of periodic orbits,  $C^0$ conjugacy implies $C^1$ (even $C^\infty$) conjugacy.  We construct examples of non-rigid diffeomorphisms on the $2$-torus, which  satisfy the assumptions of Khanin's   (but not of Krikorian's) conjecture.  We also construct examples of flows which are topologically conjugate, but not 
 $C^1$ conjugate in contradiction  to a natural generalization of the conjecture to flows. These latter examples are 
 based on results on solutions  of the cohomological equation and suggest that the structure of the space of invariant 
 distributions has to be taken into account in rigidity questions. 
   \end{sloppypar}
\end{abstract}

\date{\today}

 \maketitle
 \section{Introduction}  Several important results in low dimensional smooth dynamics concern the property of {\it geometric rigidity}
 of the dynamical system, that is, the property that all continuous conjugacies between the system and another system within
 a given class of systems are in fact $C^1$ or even $C^\infty$.

 The prototype of all  results of this kind is the famous theorem by M.~Herman which implies that almost all rotations of the circle are
 $C^\infty$-rigid,  in the sense that all continuous conjugacies with smooth circle diffeomorphisms are in fact $C^\infty$. This result was  later 
 strengthened  by J.~C.~Yoccoz to include all Diophantine rotations. 
 
 As reported in K. Khanin's ICM talk \cite{Kha18}, a generalization of the Herman--Yoccoz result to translations on higher dimensional tori has been conjectured by 
 R.~Krikorian: 
 
 \begin{conjecture} (R.~Krikorian)   Let $T$ be a $C^\infty$ diffeomorphism of $\T^d$. Assume that $T$ is topologically conjugate to a linear translation $T_\omega: {\bf x} \to
 {\bf x} + \omega (\text{\rm mod } 1)$ with a Diophantine rotation vector  $\omega= (\omega_1, \dots, \omega_d)$.  Then the conjugacy between $T$ and $T_{\omega}$ is $C^\infty$ smooth.
  \end{conjecture}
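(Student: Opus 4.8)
The plan is to obtain the conjugacy as the limit of a quadratically convergent (Newton/KAM) iteration, the Diophantine condition on $\omega$ being used to solve the relevant cohomological equation over $T_\omega$ with a bounded loss of derivatives. Normalize so that a lift of the conjugacy has the form $h=\mathrm{id}+u$ with $u\colon\R^d\to\R^d$ continuous and $\Z^d$-periodic, satisfying $h\circ T_\omega=T\circ h$; writing $\phi:=T-\mathrm{id}$ (a $\Z^d$-periodic map) this reads $u\circ T_\omega-u=\phi\circ h-\omega$. Since $T_\omega$ is uniquely ergodic, so is $T$, with invariant measure $\mu=h_*\mathrm{Leb}$ and rotation vector $\int_{\T^d}\phi\,\mathrm{d}\mu=\omega$; in particular the right-hand side above has zero mean against Lebesgue, and more generally the mean obstruction in each linearized step is quadratically small because the rotation vector is pinned and preserved by conjugation. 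Note also that showing $h\in C^\infty$ is equivalent to showing $\mu$ has a $C^\infty$ density bounded away from $0$.

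The first and decisive step is to establish \emph{a priori regularity}: to pass from $h\in C^0$ to $h\in C^{1+\alpha}$ for some $\alpha>0$ (equivalently, to bound the density of $\mu$ above and below with some Hölder modulus). I expect this to be the main obstacle, and it is why the statement is still a conjecture: in dimension one this is Herman's theorem, whose proof exploits the order structure of the circle, invariance of the rotation number, and bounded-distortion estimates for the return maps of $T$ along the denominators $q_n$ of the continued fraction of $\omega$; on $\T^d$ with $d\ge2$ there is no order, the available multidimensional continued fraction algorithms for $\omega$ are poorly behaved, and Anosov--Katok type constructions in nearby settings show that topological conjugacies to rotations can be wildly singular. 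A proof would presumably require either a genuinely new distortion or renormalization mechanism for the approximating iterates $T^{q_n}$, or an additional hypothesis — for instance that $T$ is $C^\infty$-close to $T_\omega$ (so that $h$ is automatically $C^0$-close to the identity and one is in the classical KAM regime), or an a priori $C^1$ bound on $h$.

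Granting the a priori bound $h\in C^{1+\alpha}$, the remaining passage to $C^\infty$ is the now-classical bootstrap of Moser, Zehnder and Herman. Starting from $h_0=h$, at stage $n$ one has $h_n=\mathrm{id}+u_n$ with error $e_n:=\phi\circ h_n-\omega-(u_n\circ T_\omega-u_n)$, smooths it to $\widetilde e_n$, and solves the linearized equation $v_n\circ T_\omega-v_n=-\widetilde e_n$ on the mean-zero part; the Diophantine condition $\|\langle k,\omega\rangle\|\ge c|k|^{-\tau}$ yields $\|v_n\|_{s}\lesssim\|\widetilde e_n\|_{s+\tau+d}$ in Sobolev (or Hölder) scales, and one sets $h_{n+1}=h_n\circ(\mathrm{id}+v_n)$. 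The loss of $\tau+d$ derivatives is absorbed by the smoothing operators, and since the new error is quadratically small in the old one, the scheme converges to a $C^\infty$ conjugacy provided the initial smoothness exceeds the threshold fixed by $\tau$ and $d$; if $C^{1+\alpha}$ falls short of that threshold, one first runs finitely many tame (Nash--Moser) steps in Hölder classes to gain the missing derivatives before entering the quadratically convergent regime. Finally, since any two continuous conjugacies of $T$ to $T_\omega$ differ by a translation (the centralizer of a minimal translation in $\mathrm{Homeo}(\T^d)$ consisting of translations), every continuous conjugacy is upgraded to the $C^\infty$ one produced above.
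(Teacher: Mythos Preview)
The statement you are attempting to prove is recorded in the paper as an open \emph{conjecture}, not a theorem: the authors explicitly write that they ``don't know whether Krikorian's conjecture holds,'' and no proof is offered. So there is no proof in the paper to compare your proposal against.

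As for the proposal itself, you have correctly identified the genuine gap and, to your credit, you say so outright: the passage from a merely continuous conjugacy $h$ to one of class $C^{1+\alpha}$ (or equivalently, the a priori bound on the density of the push-forward measure) is the whole difficulty, and your sketch does not supply it. The KAM/Nash--Moser bootstrap you describe in the final paragraph is indeed standard and would finish the job \emph{given} such a priori regularity, but without it the scheme cannot start: the linearized cohomological equation is posed over $T_\omega$, and to control the error quadratically one needs $Dh$ (or $Dh^{-1}$) to make sense and be bounded, which a $C^0$ hypothesis does not provide. Your remarks about the lack of an order structure, the absence of good multidimensional continued fractions, and the Anosov--Katok phenomenon are exactly the reasons the experts regard this step as the obstacle; they are diagnostic, not a proof. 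In short, your proposal is an accurate outline of why the conjecture is plausible and where it is stuck, but it is not a proof, and the paper does not claim one either.
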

  
  K.~Khanin \cite{Kha18}, proposes a  generalization of the above conjecture to arbitrary manifolds. Indeed, he proposes the following {\it Rigidity conjecture} \cite{Kha18}.
  
  \smallskip
  {\it Let $M$ be a compact Riemannian manifold. Let $T_1$ and $T_2$ be $C^\infty$ diffeomorphisms of $M$.  Assume  that $T_1$ and $T_2$ satisfy the Diophantine
  property, namely there exist $\tau$, $C>0$ such that for every $x \in M$ and all $n\in \N$ we have}
  \begin{equation}
  \label{eq:Khanin_C}
  \text{dist} (T_1^n x, x) \geq  C n^{-\tau} \,, \qquad    \text{dist} (T_2^n x, x) \geq  C n^{-\tau}  \,.
  \end{equation}

  \begin{conjecture} (K.~Khanin, \cite{Kha18}) Suppose $T_1$, $T_2$ are topologically conjugate. Then the conjugacy is $C^\infty$ smooth.
  
  Probably it is enough to require the Diophantine condition for only one of the maps $T_1$ and $T_2$. 
 \end{conjecture}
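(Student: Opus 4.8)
\smallskip
\noindent\emph{Proof proposal.} We do not prove the conjecture --- the aim is to refute it, so the plan is to construct counterexamples. The feature to exploit is that condition \eqref{eq:Khanin_C} only measures how slowly orbits become almost periodic, a quantity that survives operations --- fibred extensions, time changes --- under which $C^1$-rigidity fails. We construct two families, matching the two parts of the abstract.

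\emph{Torus diffeomorphisms.} We want $C^\infty$ diffeomorphisms $T_1,T_2$ of $\T^2$ that are topologically but not $C^1$ conjugate, both satisfy \eqref{eq:Khanin_C}, and neither is topologically conjugate to a linear translation, so that Krikorian's conjecture is not contradicted. The guiding model is a fibred map $T(x,y)=(x+\omega,\,y+\phi(x))$ over a Diophantine $\omega$ (or an appropriate time-$t$ map of a reparametrized linear flow on $\T^2$): here $\text{dist}(T^n(x,y),(x,y))\ge\|n\omega\|\ge Cn^{-\tau}$, so \eqref{eq:Khanin_C} holds automatically and uniformly in the fibre, and such a map is not equicontinuous, hence not conjugate to a translation. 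The topological conjugacy between $T_1$ and $T_2$ is to be realized by an explicit homeomorphism $H(x,y)=(x,\,y+g(x))$ whose transfer function $g$ solves a cohomological equation in the continuous, but not the $C^1$, category; non-$C^1$-rigidity is then the non-existence of a $C^1$ transfer function, detected by the cohomology class of the derivative cocycle. Reconciling this with \eqref{eq:Khanin_C} is the delicate point: over a purely Diophantine base a continuous coboundary with smooth data is already smooth, so one is pushed either into a reparametrization mechanism whose time-$t$ map is nonetheless Diophantine, or into a non-fibred construction.

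\emph{Flows.} Here the plan is to use homogeneous parabolic flows: Diophantine nilflows on a nilmanifold and horocycle flows on compact hyperbolic surfaces. Let $\phi_t$ be such a flow with generator $X$. It has no periodic orbits and satisfies a polynomial non-recurrence bound of the form \eqref{eq:Khanin_C}; moreover every positive smooth time-change of $\phi_t$ has the same orbits and, up to a bounded factor, the same closest-return times, hence also satisfies \eqref{eq:Khanin_C}, and is still not conjugate to a torus translation flow. The analytic input is the fine structure of the space of $X$-invariant distributions and the corresponding loss of regularity in the cohomological equation $Xu=f$ (of Flaminio--Forni type): one can choose $f\in C^\infty$ with $\int f=0$ so that $Xu=f$ has a continuous solution $u$ --- because $f$ is annihilated by the invariant distributions of low Sobolev order --- but no $C^1$ solution --- because it is not annihilated by those of higher order, so the unique solution modulo constants has exactly this regularity. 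Then the time-change $\psi_t$ of $\phi_t$ by $\tau=1+f$ is topologically conjugate to $\phi_t$ through the homeomorphism built from $u$ (of the form $x\mapsto\phi_{u(x)}(x)$), but not $C^1$ conjugate: a $C^1$ conjugacy would, by the straightening argument for time changes, produce a $C^1$ solution of $Xu=f$, a contradiction. Two admissible choices $f_1,f_2$ in the same weak cohomology class give the pair $T_1=\psi^{(1)}_t$, $T_2=\psi^{(2)}_t$.

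\emph{The main obstacle} is, in both families, to reconcile the quantitative non-recurrence condition \eqref{eq:Khanin_C} with non-rigidity: \eqref{eq:Khanin_C} forces orbits to spread polynomially fast, while non-$C^1$-rigidity requires a degeneracy that a homeomorphism absorbs but a diffeomorphism cannot. For the torus this fixes the shape of the construction and demands an independent verification of non-equicontinuity; for the flows the hard step is exhibiting a smooth $f$ with a continuous-but-not-$C^1$ primitive under $X$, which rests on precise estimates for the Sobolev orders of the invariant distributions of the unperturbed flow.
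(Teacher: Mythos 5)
Your flow construction is essentially the paper's: smooth time changes $\phi^f_\R$ of a parabolic flow, with $f$ chosen in the kernel of the low Sobolev order $X$-invariant distributions (so the cohomological equation has a continuous primitive) but not in the kernel of the higher-order ones (so the primitive is not $C^1$), together with a rigidity/straightening result to pass from conjugacy of time changes back to the cohomological equation. One correction: you list Diophantine nilflows among the intended counterexamples, but the paper proves the opposite for them (Theorems~\ref{thm:main2a}, \ref{thm:main2b}) --- Heisenberg nilflows \emph{are} $C^\infty$-rigid under time changes and circle skew products, precisely because their solution operator does not lose a full derivative. The flow counterexamples come only from horocycle flows and generic Kochergin flows. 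Also, the converse step (from conjugacy to cohomology) requires a genuine rigidity input (Ratner's theorem for horocycle time changes, the expansiveness argument and centralizer computation for skew products), which you gesture at as the ``straightening argument'' but do not identify.

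On the torus, your proposal diverges from the paper's and does not close. You insist that $T_1,T_2$ not be topologically conjugate to linear translations (so as not to contradict Krikorian) and are therefore driven to a fibred skew product $T(x,y)=(x+\omega,\,y+\phi(x))$ over a Diophantine $\omega$; you then correctly observe that over a Diophantine base a smooth $\phi_1-\phi_2$ with a continuous coboundary transfer function has a $C^\infty$ one, so your own construction cannot produce the required $C^0$-not-$C^1$ conjugacy, and you leave the matter unresolved (``reparametrization or non-fibred construction''). The paper's resolution is simpler and different in kind: take $T_1=R_\alpha\times R_\beta$ and $T_2=R_\alpha\times g$, with $\alpha$ Diophantine, $\beta$ Liouville, and $g$ an Arnol'd real-analytic circle diffeomorphism whose conjugacy to $R_\beta$ is continuous but not $C^1$. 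Here $T_1$ \emph{is} a translation, but with a non-Diophantine rotation vector, so Krikorian is not touched; condition~\eqref{eq:Khanin_C} is carried entirely by the $\alpha$-factor since $\operatorname{dist}(T_i^n(x,y),(x,y))\ge\|n\alpha\|$; non-rigidity is carried entirely by the $\beta$-factor. The only subtle point --- which your proposal has no analogue of --- is the resonance $(q^\beta_{n_k}+1)\alpha\to 0$ (Lemma~\ref{lemma:full_meas}), which lets one pass to the limit in the conjugacy identity and reduce to the one-dimensional statement that $H$ is not $C^1$ (Lemma~\ref{lemma:notC1}). In short: you correctly diagnosed the obstruction in the fibred-over-Diophantine-base setup, but the paper sidesteps it by decoupling the Diophantine return condition from the non-rigidity through a product construction with a Liouville second factor.
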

 
 While we don't know whether Krikorian's conjecture holds, it is a rather simple task to exhibit a counterexample to the above conjecture
 as stated, even for translations of the $2$-dimensional torus (see \S \ref{sec:T2cex}). This is because the condition \eqref{eq:Khanin_C} is too weak in higher dimensions as it does not exclude Liouvillean factors.  Indeed, we have the following
 
\begin{theorem} 
 \label{thm:counterex}
 There exist $C^{\infty}$ diffeomorphisms $T_1$ and $T_2$ of $\T^2$, which satisfy  condition \eqref{eq:Khanin_C}
 and are $C^0$ conjugate, but not $C^1$ conjugate. 
 \end{theorem}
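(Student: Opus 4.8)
The plan is to take $T_1$ to be a linear translation of $\T^2$ whose first coordinate is a Diophantine rotation --- which by itself forces condition \eqref{eq:Khanin_C} --- and whose second coordinate is a Liouville rotation, and to let $T_2$ differ from $T_1$ only by replacing that Liouville rotation with a Herman-type smooth circle diffeomorphism which is topologically but not $C^1$ conjugate to it. Concretely, fix a Diophantine $\alpha\in\R$, so that there are $C_0,\tau>0$ with $\|n\alpha\|_{\R/\Z}\ge C_0 n^{-\tau}$ for all $n\ge1$. By the classical construction of M.~Herman, the set $\mathcal B\subset\R$ of Liouville numbers $\beta$ admitting an orientation-preserving $C^\infty$ diffeomorphism $f_\beta$ of $\T=\R/\Z$ with rotation number $\beta$ whose unique invariant probability measure $\mu_{f_\beta}$ is singular with respect to Lebesgue measure $\lambda$ on $\T$ is uncountable (in fact residual); choose $\beta\in\mathcal B$ with $1,\alpha,\beta$ linearly independent over $\Q$ and set $f:=f_\beta$, $\mu:=\mu_{f_\beta}$. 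Finally put, on $\T^2=\T\times\T$,
\[ T_1:=R_\alpha\times R_\beta,\qquad T_2:=R_\alpha\times f, \]
with $R_t\colon x\mapsto x+t\ (\mathrm{mod}\ 1)$; these are $C^\infty$ diffeomorphisms of $\T^2$ and $T_1$ is a linear translation.

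First I would check \eqref{eq:Khanin_C}. Equipping $\T^2$ with the flat metric, the first coordinate of $T_i^n(x,y)$ is $x+n\alpha$ for $i=1,2$, so $\mathrm{dist}\bigl(T_i^n(x,y),(x,y)\bigr)\ge\|n\alpha\|_{\R/\Z}\ge C_0 n^{-\tau}$ for every $(x,y)\in\T^2$ and every $n\in\N$; hence both $T_1$ and $T_2$ satisfy \eqref{eq:Khanin_C} with the same constants. Next, the $C^0$ conjugacy and the relevant ergodic structure: since $f$ is $C^2$ with irrational rotation number $\beta$, Denjoy's theorem provides a homeomorphism $h$ of $\T$ with $h\circ R_\beta=f\circ h$, so $H:=\mathrm{id}_\T\times h$ is a homeomorphism of $\T^2$ with $H\circ T_1=T_2\circ H$ and $T_1,T_2$ are $C^0$ conjugate. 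Because $1,\alpha,\beta$ are $\Q$-independent, $T_1$ is a minimal translation of $\T^2$, hence uniquely ergodic with invariant measure $\lambda\times\lambda$; therefore $T_2=HT_1H^{-1}$ is uniquely ergodic with invariant measure $H_*(\lambda\times\lambda)=\lambda\times h_*\lambda$, and since $f$ is uniquely ergodic (being topologically conjugate to a minimal rotation) and $h_*\lambda$ is $f$-invariant, we get $h_*\lambda=\mu$; thus the unique $T_2$-invariant measure is $\lambda\times\mu$.

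It remains to rule out a $C^1$ conjugacy, which is where the singularity of $\mu$ is used. Suppose $\Phi$ were a $C^1$ diffeomorphism of $\T^2$ with $\Phi\circ T_1=T_2\circ\Phi$. Then $\Phi_*(\lambda\times\lambda)$ is $T_2$-invariant, so by unique ergodicity $\Phi_*(\lambda\times\lambda)=\lambda\times\mu$; but $\Phi$ being $C^1$, $\Phi_*(\lambda\times\lambda)$ has the continuous density $|\det D\Phi^{-1}|$ with respect to $\lambda\times\lambda$, so it is absolutely continuous with respect to $\lambda\times\lambda$. However $\mu\perp\lambda$ on $\T$ forces $\lambda\times\mu\perp\lambda\times\lambda$ on $\T^2$: picking $A\subset\T$ with $\lambda(A)=0$ and $\mu(\T\setminus A)=0$ gives $(\lambda\times\lambda)(\T\times A)=0$ while $(\lambda\times\mu)(\T\times A)=1$. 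This contradiction shows $T_1$ and $T_2$ are not $C^1$ conjugate.

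The main obstacle is not any of the above --- it is the single nontrivial input, the existence of a \emph{smooth} circle diffeomorphism with Liouville rotation number whose linearizing conjugacy is not absolutely continuous (equivalently, whose invariant measure is singular). This is exactly the Liouville-side failure of Herman--Yoccoz rigidity in dimension one; a fully self-contained argument would need to reproduce Herman's construction, while here it is quoted as a black box. All the remaining steps --- the verification of \eqref{eq:Khanin_C}, the Denjoy conjugacy, unique ergodicity, and the singular-versus-absolutely-continuous measure comparison --- are routine.
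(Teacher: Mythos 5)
Your argument is correct, and it proves the theorem by a genuinely different route from the paper's.

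The paper's proof makes no use of unique ergodicity. Instead, it fixes any $\beta$ admitting a real-analytic $g$ not $C^1$-conjugate to $R_\beta$, and then \emph{chooses} the Diophantine $\alpha$ so that along a diverging subsequence $(n_k)$ of indices one has $(q^\beta_{n_k}+1)\alpha\to 0$, where $(q^\beta_n)$ are the denominators of $\beta$; the existence of a full-measure set of such $\alpha$ is established by a Borel--Cantelli/Kochen--Stone argument (Lemma~\ref{lemma:full_meas}). Given a putative $C^1$ conjugacy $F=(F_1,F_2)$ between $T_1=R_\alpha\times R_\beta$ and $T_2=R_\alpha\times g$, iterating the conjugation identity $q_{n_k}+1$ times, using $R_\beta^{q_{n_k}}\to\mathrm{id}$ and $(q_{n_k}+1)\alpha\to 0$, and passing to the limit yields $F_2(x,\cdot+\beta)=g(F_2(x,\cdot))$; thus $\tilde H:=F_2(x,\cdot)$ would be a $C^1$ conjugacy of $R_\beta$ to $g$ on a fiber, contradicting the choice of $g$. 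You instead take an arbitrary Diophantine $\alpha$ with $1,\alpha,\beta$ rationally independent, observe that $T_1$ and hence $T_2$ are uniquely ergodic with invariant measures $\lambda\times\lambda$ and $\lambda\times\mu$ respectively, and note that a $C^1$ conjugacy would push $\lambda\times\lambda$ to an absolutely continuous $T_2$-invariant measure, which is incompatible with the singularity of $\lambda\times\mu$. The trade-offs: your route avoids the Borel--Cantelli lemma entirely, works for an arbitrary Diophantine $\alpha$, and in fact excludes any conjugacy pushing Lebesgue to an absolutely continuous measure (so in particular anything $C^1$, or even Lipschitz, or more generally absolutely continuous with bounded Jacobian); the paper's route, while more delicate in the choice of $\alpha$, reduces the two-dimensional rigidity question directly to the one-dimensional failure of rigidity on a circle fiber, which transparently exhibits the ``Liouvillean factor'' as the obstruction and would in principle let one plug in a circle example where rigidity fails without the invariant measure being singular (were such an example used). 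Both approaches ultimately rest on the same one-dimensional input (Arnol'd/Herman), and for that input the two failure modes --- no $C^1$ conjugacy, and singular invariant measure --- coincide, so the two proofs are essentially equally strong here.
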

  While one may hope to rectify the conjecture by introducing an appropriate generalization of  the classical Diophantine condition to ensure absence of Liouvillean factors (at least), we construct examples that cast serious doubts on whether any such general conjecture (only assuming quantitative conditions on returns of orbits) can be true.

It is not just that the classical Diophantine condition is not simply about periodic orbits, or close returns, or even about invariant sets of lower codimension, it is also that the case of toral flows is very special since irrational  linear toral flows have no  {\it invariant distributions} other than the mean. 
 
 In this paper we construct examples of flows, based on the theory of cohomological equations for horocycle flows, translation flows, and nilflows, which suggest that 
 not only invariant distributions have to be taken into account, but in fact it is {\it the structure of the space of invariant distributions that determines
 whether the flow is geometrically rigid}. 
 
 \medskip
 
 The first set of examples is given by time-changes of homogeneous flows (horocycles flows and Heisenberg nilflows)  or translation flows, the second set of examples is similar,  given by skew-product flows over the above mentioned flows with circle fibers.
 
 We remark that if a  flow satisfies the Diophantine condition~\eqref{eq:Khanin_C} (adapted to continuous time $t>1$ instead of discrete time $n\in \N$), it follows that continuous time changes and skew product flows over the given flow also satisfy the condition.
 
 It is well-known that the Krikorian conjecture holds for time-changes and  skew-products of Diophantine toral linear flows with circle fibers, by the theory of cohomological 
 equations. For general manifolds (compact, or at least finite volume) we produce counterexamples to the continuous-time version of Khanin's conjecture, in the case of horocycle flows and Kochergin flows,  as well as results in agreement with  it in the case of nilflows.

 \begin{theorem}
 \label{thm:main1a} Horocycle flows on unit tangent bundles of compact (or finite-volume) hyperbolic surfaces or generic Kochergin
 locally Hamiltonian flows on surfaces of higher genus  are not $C^1$-rigid for time changes (outside of a subspace of finite codimension 
 in the space of pairs of time change functions), in the sense that there exist smooth time-changes which are $C^0$ but not $C^1$ conjugate.
 
In addition, there exist smooth time-changes of horocycle flows with time-$T$ maps which are $C^0$ conjugate but not $C^1$-conjugate.
  \end{theorem}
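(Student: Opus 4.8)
The plan is to translate the rigidity problem for time changes into the solvability theory of the cohomological equation, and then to exploit the gap between \emph{continuous} and $C^1$ solvability created by the presence of a large space of invariant distributions --- a phenomenon absent for linear toral flows, whose only invariant distribution is the volume. Let $U$ generate the horocycle flow $\phi_t$ on $M=\Gamma\backslash\PSL(2,\R)$, and for $\tau\in C^\infty(M)$, $\tau>0$, let $\phi^\tau$ (generator $\tau U$) be the corresponding time change. A conjugacy isotopic to the identity that preserves the oriented orbit foliation has the form $H(x)=\phi_{h(x)}(x)$, and since $U$ does not vanish, $h$ inherits the regularity of $H$; writing $H\circ\phi^\tau_t=\phi_t\circ H$ one finds that $H$ conjugates $\phi^\tau$ to $\phi$ if and only if $Uh=\tau^{-1}-1$. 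Hence it suffices to exhibit $g\in C^\infty(M)$ which is a $C^0$-coboundary but not a $C^1$-coboundary for $U$; taking $\tau=(1+g)^{-1}$ (legitimate after rescaling $g$ so $\|g\|_{C^0}$ is small, which also makes $x\mapsto\phi_{h(x)}(x)$ a homeomorphism) then yields a smooth time change that is $C^0$- but not $C^1$-conjugate to the horocycle flow. In the negative direction one uses that a $C^1$-conjugacy would force a $C^1$ solution of $Uh=\tau^{-1}-1$, the additive constant being pinned down by minimality.

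Both halves of the construction of $g$ rest on the Flaminio--Forni analysis of the horocycle cohomological equation. For the \emph{positive} part, $U$ has an infinite-dimensional space of invariant distributions organised by the unitary components of $L^2(M)$, and only those attached to the principal and complementary series (the ones of positive ``exponent'') obstruct boundedness of the ergodic integrals $\int_0^T g\circ\phi_t\,dt$. One therefore takes $g$ smooth with $\int_M g\,d\mathrm{vol}=0$, supported --- in the representation-theoretic sense --- inside a single discrete-series component $H$ of minimal weight, with $\mathcal D_H(g)\neq0$ for the invariant distribution of $H$; the ergodic integrals of $g$ are then uniformly bounded, so by the Gottschalk--Hedlund criterion $g$ is a continuous coboundary. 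For the \emph{negative} part, $\mathcal D_H(g)\neq0$ puts $g$ outside the range of $U$ on $C^\infty(M)$, and the sharp regularity estimates --- the loss of derivatives in solving $Uh=g$ being matched to the Sobolev order of the obstructing distribution --- force the essentially unique continuous solution $h$ to have regularity just short of $C^1$, so $h\notin C^1$. This is the main obstacle: vanishing of an invariant distribution of positive order is \emph{not} necessary for $C^1$-solvability, so one cannot merely invoke $\mathcal D_H(g)\neq0$; one must instead control the modulus of continuity of $h$ directly --- through its explicit description inside the module $H$, or through the deviation asymptotics of the ergodic integrals of $g$ and of $Xg$ along the geodesic direction --- and verify that it is genuinely sub-Lipschitz.

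The conditions imposed on $g$ --- zero mean together with vanishing of the finitely many invariant distributions of order below that of $\mathcal D_H$, equivalently boundedness of the ergodic integrals of $g$ and of $Xg$ --- cut out a closed subspace of finite codimension in $C^\infty(M)$, while $\mathcal D_H(g)\neq0$ is open and dense; this gives the assertion ``outside a subspace of finite codimension in the space of pairs of time-change functions''. For the time-$T$ map statement, fix such a $\tau$: the $C^0$-conjugacy $H$ between $\phi^\tau$ and $\phi$ conjugates the time-$T$ maps $\phi^\tau_T$ and $\phi_T$, which satisfy \eqref{eq:Khanin_C} since $\phi_t$ does on a compact quotient and time changes preserve the condition. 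Were $\phi^\tau_T$ and $\phi_T$ $C^1$-conjugate, then --- the smooth flow containing $\phi^\tau_T$ being unique up to rescaling, by rigidity of the centralizer of a horocycle-type time-$T$ map --- the conjugacy would intertwine the flows, contradicting the previous paragraph. Finally, for generic Kochergin locally Hamiltonian flows on higher-genus surfaces the argument is structurally identical, with Flaminio--Forni replaced by Forni's theory of the cohomological equation for area-preserving flows and its refinements near the degenerate singularities, where the loss of regularity --- hence the continuous-but-not-$C^1$ phenomenon --- is only more pronounced; here too one obtains the dichotomy away from a finite-codimension subspace.
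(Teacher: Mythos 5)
Your overall strategy matches the paper's: reduce $C^1$-conjugacy of time changes to $C^1$-solvability of a cohomological equation, and then exploit the gap between continuous and $C^1$ solvability created by invariant distributions of intermediate Sobolev order. However, there are several genuine gaps.

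First, and most importantly, your negative direction is incomplete. You write that ``a $C^1$-conjugacy would force a $C^1$ solution of $Uh=\tau^{-1}-1$,'' but this only follows for conjugacies of the very special form $H(x)=\phi_{h(x)}(x)$. An arbitrary $C^1$ (or measurable) conjugacy between two time changes preserves the orbit foliation but a priori need not preserve each orbit, and could incorporate a nontrivial centralizer element. The paper handles this with Ratner's rigidity theorem (Theorem~\ref{thm:Ratner_rig}), which shows that any measurable conjugacy of horocycle time changes is, up to an element of the normalizer of $\Gamma$, given by a transfer function; Lemma~\ref{lemma:cocycle_rigidity} then upgrades the measurable transfer function to a continuous one. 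For Kochergin flows the corresponding ingredient is Lemma~\ref{lemma:Koch_centr}, which uses Nielsen--Thurston to show the $C^0$ centralizer is generically trivial. Without some such input you have not ruled out $C^1$-conjugacies of a different shape, and your time-$T$ argument (``the smooth flow containing $\phi^\tau_T$ being unique up to rescaling, by rigidity of the centralizer'') is precisely a disguised appeal to Ratner's theorem, not something that follows by general principles.

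Second, your choice of representation is wrong, and the error traces to a false assertion about the spectrum of obstructions. You claim that ``only those [invariant distributions] attached to the principal and complementary series (the ones of positive `exponent') obstruct boundedness of the ergodic integrals,'' and accordingly pick $g$ supported on a discrete-series component ``of minimal weight.'' But the discrete-series component with $n=1$ (Casimir parameter $\mu=0$, Sobolev order $1$) carries the invariant distributions dual to harmonic $1$-forms, and by the Dolgopyat--Sarig results on windings (\cite{DS17}, cited in Lemma~\ref{lemma:cocycle_rigidity}) a non-zero harmonic form is \emph{not} a coboundary with measurable primitive: its ergodic integrals grow logarithmically. To get uniformly bounded ergodic integrals and hence a continuous transfer function via Gottschalk--Hedlund, one needs a discrete-series component with $n\geq 2$, where the obstructing distribution has Sobolev order $n\geq 2$. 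This is exactly what Lemma~\ref{lemma:CE_Horo} arranges.

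Third, you correctly notice that one cannot ``merely invoke $\mathcal D_H(g)\neq 0$'' to conclude $h\notin C^1$, but then you propose to ``control the modulus of continuity of $h$ directly,'' leaving this unexecuted. The paper instead invokes the converse statements of \cite{FlaFo03} (Theorem 1.3, or Lemma 4.9): if an invariant distribution of Sobolev order $S$ does not vanish on $f$, then the cohomological equation has no solution in $W^{S-1}(M)$. Taking $n=2$ yields $S=2$, hence no $W^1$ (and therefore no $C^1$) solution. So the difficulty you flag is real but is resolved by a known converse theorem, not by a modulus-of-continuity estimate.

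Finally, your finite-codimension accounting is off: vanishing of the mean and of ``the finitely many invariant distributions of order below that of $\mathcal D_H$'' is \emph{not} a finite-codimension condition in $C^\infty(M)$ --- the principal and complementary series contribute countably many obstructions of order $\leq 1$. The finite codimension in the theorem refers to the codimension of the $C^1$-conjugate locus \emph{inside} the (already countable-codimension) locus of $C^0$-conjugate pairs; the additional obstructions there are the finitely many discrete-series distributions of order in $[1,2)$.
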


 \begin{theorem}
 \label{thm:main1b} Skew product flows  over horocycle flows on unit tangent bundles of compact hyperbolic surfaces or generic Kochergin
 locally Hamiltonian flows on surfaces of higher genus, defined by a cocycle with values in the circle, are in general not $C^1$-rigid (outside of a subspace of finite codimension 
 in the space of pairs of skew-product flows).  
  \end{theorem}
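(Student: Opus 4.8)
The plan is to translate $C^{k}$-conjugacy of these circle skew products into a cohomological equation over the base flow and then to feed in the description of its solutions. Write the base flow as $h_{t}$ --- the horocycle flow on $M=\Gamma\backslash\PSL(2,\R)$ for $\Gamma$ cocompact, or a Kochergin locally Hamiltonian flow on a higher-genus surface --- with infinitesimal generator $X$, and present a circle extension as $T^{\phi}_{t}(x,\theta)=\bigl(h_{t}x,\ \theta+\textstyle\int_{0}^{t}\phi(h_{s}x)\,ds\bigr)$ with $\phi\in C^{\infty}(M)$. The first, elementary, step is the observation that the fibrewise map $H(x,\theta)=(x,\theta+u(x))$ conjugates $T^{\phi_{1}}$ to $T^{\phi_{2}}$ exactly when $\phi_{1}-\phi_{2}=Xu$; hence $T^{\phi_{1}}$ and $T^{\phi_{2}}$ are conjugate by such an $H$ of class $C^{k}$ precisely when $\phi_{1}-\phi_{2}$ is a smooth coboundary with transfer function of class $C^{k}$.

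Second, I would show that, up to a finite-dimensional family of exceptional symmetries, these are the only conjugacies. Any topological conjugacy $H$ between $T^{\phi_{1}}$ and $T^{\phi_{2}}$ conjugates the commuting circle actions $R_{s}(x,\theta)=(x,\theta+s)$ to circle actions commuting with $T^{\phi_{2}}$; for a generic cocycle, the centralizer of $T^{\phi_{2}}$ reduces to $\{T^{\phi_{2}}_{t}\}\cdot\{R_{s}\}$, so $H$ must carry $R_{s}$ to $R_{\pm s}$ and is therefore fibrewise. It then descends to a topological self-conjugacy of $h_{t}$, which by Ratner's classification of the centralizer of the horocycle flow (and the corresponding softer structure for Kochergin flows) is a time shift and does not change the cohomology class of $\phi_{i}$. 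Consequently a $C^{1}$ conjugacy exists if and only if $\phi_{1}-\phi_{2}$ \emph{or} $\phi_{1}+\phi_{2}$ is a smooth $C^{1}$-coboundary, possibly after a twist by one of the finitely many nontrivial circle bundles over $M$.

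The decisive step rests on the cohomological-equation theory for the base flow and on the structure of its space of invariant distributions. For the horocycle flow, Flaminio--Forni's analysis shows that this space is infinite dimensional, containing invariant distributions of unbounded Sobolev order that arise from the discrete series components of $L^{2}(M)$, and --- crucially --- that the corresponding Green operator carries a genuine loss of regularity that is not uniform along the spectrum. I would use this to produce a smooth $f$ on $M$ with $\int_{M}f\,d\mathrm{vol}=0$ such that $Xu=f$ has a \emph{continuous} solution $u$ --- obtained from the explicit renormalization of the ergodic integrals of $f$ and automatically smooth along the flow direction --- but \emph{no} solution of class $C^{1}$, the accumulation of the loss of regularity preventing the transfer function from being Lipschitz. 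For the Kochergin examples the analogous input is the cohomological-equation theory for locally Hamiltonian flows with sufficiently degenerate saddles, where smooth coboundaries with continuous but non-Lipschitz transfer functions again exist.

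Finally, fixing any smooth $\phi_{2}$ and setting $\phi_{1}=\phi_{2}+f$ gives a pair of circle skew products that are $C^{0}$ but not $C^{1}$ conjugate. Inside the affine space $\phi_{2}+\{\text{smooth coboundaries with continuous transfer function}\}$ of all $\phi_{1}$ that are $C^{0}$-conjugate to $\phi_{2}$, the sublocus carrying \emph{some} $C^{1}$ conjugacy is, by the second step, the union of $\phi_{2}+\{\text{smooth }C^{1}\text{-coboundaries}\}$ and $-\phi_{2}+\{\text{smooth }C^{1}\text{-coboundaries}\}$ (up to the finite bundle twist); the quantitative part of the cohomological-equation theory shows that this union has finite codimension, and its complement consists exactly of $C^{0}$-but-not-$C^{1}$-conjugate pairs, which gives the theorem, the same bookkeeping handling the Kochergin case. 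The hard part is the third step --- exhibiting a \emph{smooth} right-hand side $f$ whose $X$-primitive is continuous but not $C^{1}$ --- and this is precisely where compact hyperbolic surfaces and degenerate Kochergin flows, with their rich spaces of invariant distributions, are needed, in contrast to linear toral flows for which the mean is the only invariant distribution and no such $f$ exists; a secondary difficulty is the rigidity input of the second step, and it is the finitely many exceptional symmetries there --- the fibre flip and the bundle twists --- that are responsible for the ``finite codimension'' caveat rather than a cleaner statement.
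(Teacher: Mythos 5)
Your outline matches the paper's strategy at a high level --- reduce $C^k$-conjugacy of the circle extensions to the regularity of a transfer function for the cohomological equation over the base, then feed in Flaminio--Forni for the horocycle case and the Kontsevich--Zorich/Sobolev-order analysis for the Kochergin case to produce smooth $f$ with $Xu=f$ solvable in $C^0$ but not in $W^1$ --- but your Step 2 takes a genuinely different route, and that route has a gap as written.

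The paper establishes that a conjugacy of the skew products must descend to an automorphism of the base by an \emph{expansiveness} argument about $\phi_\R$ (Lemma \ref{lemma:conj_skew_shifts2}): using uniform continuity of the map $h_M$ and the expansiveness constant, one shows directly that $h_M(x,\theta)$ and $h_M(x,\theta')$ lie on the same orbit of the centralizer $\mathcal Z_X$ of the base generator, then averages over the fiber to extract the automorphism $\bar h$. You instead argue via the centralizer of the \emph{skew-product} flow $T^{\phi_2}$: you assert that for a generic cocycle this centralizer reduces to $\{T^{\phi_2}_t\}\cdot\{R_s\}$, conclude that $H R_s H^{-1}$ must again be the fiber rotation, and deduce that $H$ is fibrewise. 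That assertion is not a soft fact --- it is essentially equivalent to the descent property you are trying to prove, and it can fail (e.g.\ when $\phi_2$ is itself cohomologous to a constant the extension is a product and the centralizer is enormous), so a nontrivial genericity argument is needed and you have not supplied it. The paper's expansiveness lemma is precisely the self-contained replacement for this claim, after which the centralizer input needed is only the known, finite, continuous centralizer of the \emph{base} flow (Ratner for horocycle, the mapping-class argument of Lemma \ref{lemma:Koch_centr} for Kochergin flows).

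Two smaller points: the ``twist by nontrivial circle bundles'' is extraneous here because the skew products are all defined on the fixed trivial bundle $M\times\T$; and the ``finite codimension'' in the statement comes from the finite dimension of the space of invariant distributions of Sobolev order in $(k,k+1]$ (discrete-series parameter for horocycle, genus for Kochergin), not from the finitely many exceptional symmetries --- those only turn one affine subspace into a finite union of such subspaces of the same codimension. You also do not need degenerate saddles for the Kochergin input; the paper works with functions compactly supported away from $\Sigma$ and the obstruction count is governed by the Kontsevich--Zorich exponents.
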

 
 In the opposite direction we prove the following partial results:

   \begin{theorem}
  \label{thm:main2a}
 Heisenberg nilflows are $C^\infty$-rigid within the class of smooth time-changes, in the sense that if a $C^\infty$ time-change of a Diophantine 
 Heisenberg nilflow is $C^0$ conjugate to the Heisenberg nilflow, then the conjugacy is $C^\infty$. 
 \end{theorem}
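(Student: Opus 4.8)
The plan is to reduce the existence of the topological conjugacy to that of an a priori only continuous solution of the cohomological equation $X w=\alpha-1$, then to upgrade such a solution to a smooth one, and finally to reconstruct the conjugacy. Throughout, write $\phi^X$ for the Diophantine Heisenberg nilflow on $M=\Gamma\backslash G$ with $G$ the Heisenberg group, let $\alpha\in C^\infty(M)$ be the (positive) time-change function, normalized so that $\int_M\alpha\,d\mu=1$ with $\mu$ the Haar measure, and let $h$ be a homeomorphism of $M$ with $h\circ\phi^{X/\alpha}_t=\phi^X_t\circ h$ for all $t$.

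First I would carry out the reduction. Since $\phi^{X/\alpha}$ and $\phi^X$ have the same orbits, $h$ is a homeomorphism preserving the orbit foliation $\mathcal F$ of the nilflow; moreover the conjugacy relation yields $h(\phi^X_s x)=\phi^X_{A(x,s)}(h(x))$ with $A(x,s)=\int_0^s\alpha(\phi^X_r x)\,dr$, so that $h$ is automatically $C^\infty$ along the $\phi^X$-orbits. The transverse behaviour of $h$ is then controlled using the homogeneous (affine) structure of the holonomy pseudogroup of $\mathcal F$ together with the unique ergodicity of $\phi^{X/\alpha}$: the outcome should be that, after composing $h$ with a right translation $R_{g_0}$ of $M$ (these are smooth diffeomorphisms commuting with $\phi^X$), the resulting conjugacy has the form $x\mapsto\phi^X_{w(x)}(x)$ with $w$ continuous and $C^1$ along the flow. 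Substituting this form into the conjugacy equation and differentiating at $t=0$ then gives $X w=\alpha-1$. I expect this to be the main obstacle: it amounts to showing that a merely topological conjugacy cannot be wild in the directions transverse to the flow, and the homogeneity of the Heisenberg nilflow must be used essentially here --- the same reduction goes through for horocycle and Kochergin flows, but there the next step fails, which is exactly the mechanism behind the counterexamples of Theorem~\ref{thm:main1a}.

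The conceptual heart of the argument is the next step: if $f\in C^\infty(M)$ has zero $\mu$-mean and $X w=f$ for some $w\in L^2(M)$, then $w\in C^\infty(M)$. To prove it I would decompose $L^2(M)=\bigoplus_{n\in\Z}\mathcal H_n$ according to the central character. On $\mathcal H_0=L^2(\T^2)$ the operator $X$ is the constant vector field in the Diophantine frequency direction, and the equation has the unique mean-zero solution with Fourier coefficients $\widehat f_0(k)/2\pi i\langle k,\omega\rangle$; the Diophantine condition and the smoothness of $f_0$ force this solution to be $C^\infty$. For $n\neq0$, in the Schrödinger model each of the $|n|$ copies of the irreducible $\pi_n$ is realized on $L^2(\R)$, and there the operator $X$ is \emph{unitarily} conjugate to $a\,d/ds$ (for a nonzero constant $a$) via multiplication by a unimodular function; after this conjugation the equation reads $a\,\widetilde w_n'=\widetilde f_n$ with $\widetilde w_n\in L^2(\R)$ and $\widetilde f_n$ a Schwartz function. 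An $L^2(\R)$ function with Schwartz derivative lies in $H^1(\R)$ and hence tends to $0$ at $\pm\infty$, which forces $\int_\R\widetilde f_n\,ds=0$ --- that is, the invariant distribution carried by that copy of $\pi_n$ annihilates $f$ --- and then $\widetilde w_n(s)=a^{-1}\int_{-\infty}^s\widetilde f_n$ is itself a Schwartz function. Thus $f$ is annihilated by every invariant distribution of $\phi^X$, so by the theory of the cohomological equation for Diophantine Heisenberg nilflows there is $\widetilde w\in C^\infty(M)$ with $X\widetilde w=f$; since $X(w-\widetilde w)=0$ and $w-\widetilde w\in L^2(M)$, ergodicity gives $w-\widetilde w$ constant, hence $w\in C^\infty(M)$.

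Finally I would apply this regularity statement to $f=\alpha-1$, obtained in the first step; then $x\mapsto\phi^X_{w(x)}(x)$ is a $C^\infty$ diffeomorphism of $M$, and therefore so is $h=R_{g_0}\circ\bigl(x\mapsto\phi^X_{w(x)}(x)\bigr)$, which is the assertion. It is worth stressing that the reason the argument succeeds for Heisenberg nilflows but not for horocycle flows is confined to the middle step: only for the Heisenberg group is $X$ unitarily equivalent, on \emph{every} irreducible component, to the translation vector field on $L^2(\R)$, and it is this that makes every $L^2$ --- a fortiori every continuous --- solution of the cohomological equation automatically smooth.
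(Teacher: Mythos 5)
Your middle step --- that any $L^2$ (hence any $C^0$) solution of $Xw=f$ with $f\in C^\infty(M)$ of zero mean is automatically $C^\infty$ --- is correct and is essentially Lemma~\ref{lemma:CE_Heis} of the paper. Your Schr\"odinger-model argument, conjugating $X$ on each infinite-dimensional irreducible to $a\,d/ds$ by a unimodular multiplier so that an $L^2$ solution with Schwartz derivative is forced to be Schwartz and to annihilate the invariant distribution, is a clean alternative to the paper's route, which instead uses the asymptotic $\Vert t^{-1/2}\int_0^t f_\lambda\circ\phi_s\,ds\Vert_{L^2}\to\vert D_\lambda(f_\lambda)\vert$ from \cite{FoKa20a}. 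This part of your plan would genuinely work.

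The first step, however, is a gap, and it is the decisive one. You want to show that a $C^0$ conjugacy $h$ between $\phi^{X/\alpha}$ and $\phi^X$, after composing with an element of the smooth centralizer, has the special form $x\mapsto\phi^X_{w(x)}(x)$, so that differentiating the conjugacy relation along the flow yields $Xw=\alpha-1$. You appeal to ``the homogeneous structure of the holonomy pseudogroup'' and unique ergodicity, but this is not substantiated, and you yourself flag it as ``the main obstacle''; it amounts to ruling out arbitrary transverse behaviour of a merely topological conjugacy between orbit-equivalent parabolic flows, which is exactly the hard point. Indeed the paper explicitly records that the closely related question --- whether a $C^0$ conjugacy between two nontrivial time changes $\phi^f_\R$ and $\phi^g_\R$ of a Heisenberg nilflow forces $f-g$ to be a coboundary --- is open. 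The paper's proof of Theorem~\ref{thm:main2a} does \emph{not} pass through this step at all. Instead it observes that $\phi^X$ is not weakly mixing (it has the toral linear flow as a factor), so a $C^0$ conjugacy forces $\phi^{X/\alpha}$ not to be weakly mixing; the earlier weak-mixing dichotomy theorem (proved by a quantitative analysis of an eigenfunction equation, using two-sided bounds on $\int_0^{t_n}Zf\circ\phi_s\,ds$ under the Diophantine condition and the intermediate-value estimate) then directly yields that $f$ is \emph{smoothly} cohomologous to a constant, and Lemma~\ref{lemma:time_changes1} closes the argument. So the coboundary is produced by spectral theory rather than by decomposing the conjugacy; that is the essential idea your proposal is missing, and without it the proof does not go through.
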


  \begin{theorem}
  \label{thm:main2b}
  Skew product flows over Diophantine Heisenberg nilflows are $C^\infty$-rigid within the class of skew-product flows, in the sense that if a $C^\infty$ skew-product flow 
  over a Diophantine Heisenberg nilflow, defined by a cocycle with values in the circle,
 is $C^0$ conjugate to a skew-product flow (with circle fibers) over the same Heisenberg nilflow, then the conjugacy is $C^\infty$. 
 \end{theorem}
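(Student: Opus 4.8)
The plan is to reduce the $C^0$-rigidity of skew-product flows over a Diophantine Heisenberg nilflow $\{\phi_t\}$ to the theory of the cohomological equation for Heisenberg nilflows, exactly as in the circle-valued coboundary reductions used to prove the Krikorian conjecture for toral flows. Write the two skew-products on $M \times \T$ as $\Phi^i_t(x,s) = (\phi_t x, s + \int_0^t \beta_i(\phi_\tau x)\,d\tau)$ for $i=1,2$, where $\beta_1,\beta_2 \in C^\infty(M)$ are the infinitesimal cocycle generators (identifying circle cocycles with their real logarithms up to an integer class). Suppose $h\colon M\times\T \to M\times\T$ is a $C^0$ conjugacy: $h \circ \Phi^1_t = \Phi^2_t \circ h$. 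First I would show $h$ must descend to a conjugacy of the base — since $\{\phi_t\}$ is uniquely ergodic (it is a Diophantine nilflow) and the fiber is a compact group, the projection of $h$ to $M$ semiconjugates $\phi_t$ to itself, and by unique ergodicity and minimality this projection is a translation in the nilpotent group (or at least smoothly conjugate to the identity by a homogeneous argument); after absorbing it, we may assume $h(x,s) = (x, s + u(x,s))$. Averaging over the $\T$-action on the fiber (which commutes with $\Phi^i_t$ up to rotation) then forces $u$ to be independent of $s$, so $h(x,s)=(x,s+u(x))$ with $u\colon M\to\T$ continuous.

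**The cohomological reduction.** The conjugacy identity then becomes, for all $t$,
\begin{equation}
u(\phi_t x) - u(x) = \int_0^t \bigl(\beta_1 - \beta_2\bigr)(\phi_\tau x)\,d\tau \pmod 1.
\end{equation}
Lifting to $\R$ and differentiating in $t$ at $t=0$ (in the distributional sense first), $u$ is a continuous solution of the cohomological equation $X u = \beta_1 - \beta_2$, where $X$ is the generating vector field of the nilflow. The key input is the hypoelliptic / cohomological-equation theory for Diophantine Heisenberg nilflows (Flaminio–Forni): the space of invariant distributions is spanned by the finitely many obstructions coming from the base torus rotation together with explicit distributions in each Weil–Peterson–type component, and crucially any \emph{continuous} (even just distributional) solution $u$ of $Xu = g$ with $g$ smooth is automatically $C^\infty$, provided the necessary invariant-distribution obstructions vanish — which they do here since a continuous solution exists. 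Concretely: the mere existence of a continuous solution kills the finitely many invariant-distribution obstructions, and then the tame estimates / a priori Sobolev bounds for the Heisenberg Laplacian upgrade $u$ from $C^0$ to $C^\infty$. Hence $u \in C^\infty(M,\T)$ and $h$ is a $C^\infty$ diffeomorphism.

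**Where the work concentrates.** Two steps carry the real content. The first is the regularity bootstrap for the cohomological equation over a Diophantine Heisenberg nilflow: one must know not only that smooth solutions exist when the obstructions vanish, but that \emph{distributional} solutions are smooth — this is the hypoellipticity-type statement, and it is where the Diophantine hypothesis on the nilflow enters (through the loss of derivatives in the Sobolev estimates for the transfer operator, controlled by the Diophantine exponent). I would invoke the Flaminio–Forni results in the precise form "existence of a distributional solution $+$ smooth data $\Rightarrow$ smooth solution" for Diophantine Heisenberg nilflows; if that exact statement is not on the shelf, the fallback is to run the Fourier decomposition over the central circle, handle the zero central frequency (a Diophantine $\T^2$-rotation, classical) and the nonzero frequencies (irreducible unitaries, where one uses the explicit theory of the cohomological equation in each representation) separately. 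The second, more routine, step is the preliminary normalization: showing the $C^0$ conjugacy respects the fiber structure and strips down to the form $(x,s)\mapsto(x,s+u(x))$; here unique ergodicity of the Diophantine nilflow and a Fubini/averaging argument on the fiber do the job, but one must be a little careful that the base part of $h$ is genuinely a homogeneous map and not merely a topological conjugacy — this is handled by the rigidity of the nilflow's orbit foliation together with the fact that any topological self-conjugacy of a minimal nilflow is affine. I expect the regularity bootstrap to be the main obstacle; the normalization is standard but needs to be written carefully.
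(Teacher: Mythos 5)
Your overall strategy coincides with the paper's: reduce the $C^0$ conjugacy of the skew-product flows to a coboundary identity on the base, then invoke a regularity bootstrap for the cohomological equation over a Diophantine Heisenberg nilflow. The bootstrap you want is exactly what the paper proves (Lemma~\ref{lemma:CE_Heis}): for a Diophantine Heisenberg nilflow, a smooth $f$ with an $L^2$ (a fortiori $C^0$) primitive $u$ actually has $u\in C^\infty$. So the second of your two ``work concentrates'' steps is fine, modulo the small error that the space of invariant distributions is \emph{countably infinite} dimensional (one invariant distribution per nonzero central parameter, plus the toral obstructions), not finitely many; this is irrelevant to the bootstrap but worth correcting.

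The genuine gap is in the first step, the normalization. You write that ``the projection of $h$ to $M$ semiconjugates $\phi_t$ to itself,'' but the composition $p\circ h: M\times\T\to M$ a priori depends on the fiber coordinate $\theta$; there is no reason unique ergodicity alone forces $p\circ h$ to factor through $p$. The paper addresses precisely this point via an \emph{expansiveness} property of the base flow (Definition~\ref{def:expansive} and Lemma~\ref{lemma:conj_skew_shifts2}): by uniform continuity of $h$ and expansiveness, $h_M(x,\theta)$ and $h_M(x,\theta')$ are forced onto the same orbit of the centralizer $\mathcal Z_X$ of the generator. For Heisenberg nilflows $\mathcal Z_X$ contains the center, so after a genuinely nontrivial fiber-averaging argument (carried out in the proof of Lemma~\ref{lemma:conj_skew_shifts2}) one extracts a $C^0$ automorphism $\bar h$ of the base flow, and then Theorem~\ref{thm:Heis_centr} (Keynes--Markley--Sears) identifies $\bar h$ with an element of the normalizer of the lattice, hence smooth and preserving invariant distributions. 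Your appeal to ``rigidity of the orbit foliation'' and ``topological self-conjugacies of minimal nilflows are affine'' gestures at the right ingredients, but ``unique ergodicity $+$ Fubini/averaging'' as stated is not an argument for the fibration-preservation step; the expansiveness mechanism is the missing idea, and without it the reduction to a scalar cohomological equation does not go through.
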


 We remark that, as consequence of the fact that horocycle flow, translation flows, and Heisenberg nilflows are renormalized respectively by the geodesic
 flow, the Teichm\"uller flow (or the Rauzy--Veech induction) and the diagonal flow on $ASL(2, \Z) \backslash ASL(2, \R)$ (see \cite{FlaFo06}), it follows
 that the generic skew-product flows satisfy the analogs of Khanin's condition~\eqref{eq:Khanin_C} for flows. Similar constructions yield maps (instead of flows) 
 with similar properties (see for instance \cite{FFT16}  for results on the cohomological equation for horocycle maps).

In light of our result for skew-product flows over Heisenberg nilflows, and of the results \cite{FlaFo07} on the cohomological equations for general nilflows,
we propose the following conjecture
 \begin{conjecture} 
 Diophantine Heisenberg (or, more generally, step $2$) nilflows are $C^\infty$ geometrically rigid.
 \end{conjecture}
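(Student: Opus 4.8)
The plan is to follow the strategy behind Theorems \ref{thm:main2a} and \ref{thm:main2b} and push it to the general conjugacy problem: reduce the smoothness of a topological conjugacy to solving the \emph{linearized conjugacy equation}, which for a step-$2$ nilflow is a (triangular system of) cohomological equation(s) over the nilflow itself, and then run a KAM-type bootstrap using the classification of invariant distributions of Diophantine step-$2$ nilflows from \cite{FlaFo07} together with the renormalization of Heisenberg nilflows by the diagonal flow on $ASL(2,\Z)\backslash ASL(2,\R)$.

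First I would reduce to the fibered picture. A step-$2$ nilmanifold $M=\Gamma\backslash N$ fibers over a torus, with the nilflow covering a Diophantine linear flow on the base and acting on the fibers (circles, in the Heisenberg case) through the central direction. Given a homeomorphism $h$ conjugating the Diophantine step-$2$ nilflow $\phi$ to a second smooth flow $\psi$, one shows that $h$ respects this structure: by unique ergodicity $h$ pushes Haar to Haar, and the central (fiber) direction is characterized dynamically, e.g.\ as the unique direction along which orbit segments of $\phi$ diverge at exactly linear rate (equivalently by the centralizer/commutator structure), so $h$ must carry fibers to fibers and descend to a homeomorphism $\bar h$ of the base. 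One then checks that $\bar h$ conjugates the Diophantine linear flow to the same linear flow (the parabolic orbit-growth type of $\psi$ must match that of $\phi$); since an irrational linear flow on the base is minimal, its orbit closure in the translation group is the full torus, and any homeomorphism commuting with it is a translation. Hence $\bar h$ is affine, in particular $C^\infty$ — identifying the base structure of $\psi$ is itself part of the work here, but the $C^1$ cohomological input of Step three feeds back into it.

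Second, normalizing by $\bar h$, the remaining nonlinear part of $h$ is a fiber-valued map, and the conjugacy identity linearizes (Herman--Yoccoz style, once an a priori regularity input is available — see below) to a cohomological equation $Xu=f$, resp.\ a finite triangular system graded by the central Fourier mode, where $X$ generates $\phi$ and $f$ is smooth. Here I would invoke the Flaminio--Forni theory: for a Diophantine step-$2$ nilflow the invariant distributions are completely classified and organized by the central characters, and $Xu=f$ is solvable with a \emph{finite, Diophantine-controlled loss of Sobolev regularity} provided the countably many distributional obstructions annihilate $f$. These obstructions must vanish because $h$ is a genuine homeomorphism and $\psi$ a genuine flow: a surviving obstruction would force polynomial deviations of orbit segments of $h^{-1}\psi h$ incompatible with those of $\phi$, contradicting that $h$ is a conjugacy. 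Feeding the resulting tame estimates into the standard iteration then upgrades any fixed positive-regularity conjugacy to $C^\infty$.

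The main obstacle, exactly as in the classical circle theory, is the low-regularity end: passing from $h\in C^0$ (or merely a homeomorphism) to the threshold regularity, say $C^1$ or Hölder, at which the tame estimates of \cite{FlaFo07} apply. I expect to obtain this via renormalization. The Diophantine condition means the $g_t$-orbit, under the diagonal renormalization flow on $ASL(2,\Z)\backslash ASL(2,\R)$, of the point representing $\phi$ returns to a fixed compact set with controlled, at worst logarithmic, gaps; pulling $h$ back along long pieces of renormalized orbits contracts its nonlinear part at a definite rate, producing an a priori Hölder and then $C^1$ bound and closing the scheme. The restriction to step $2$ is used precisely here: by \cite{FlaFo07} the space of invariant distributions in higher step is much larger and less tamely structured, the loss-of-regularity estimates degrade, and — in line with the horocycle and Kochergin counterexamples of Theorems \ref{thm:main1a}--\ref{thm:main1b}, where it is exactly the structure of the space of invariant distributions that obstructs rigidity — one does not expect the argument, or the rigidity itself, to persist beyond step $2$.
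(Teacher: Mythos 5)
This is stated in the paper as a \emph{conjecture}, not a theorem: the paper offers no proof of it, and in fact explicitly acknowledges that a complete argument is currently out of reach. What the paper actually establishes is much weaker, namely $C^\infty$-rigidity for Diophantine Heisenberg nilflows \emph{within} the restricted classes of smooth time-changes (Theorem \ref{thm:main2a}) and of skew-product flows with circle fibers over a fixed base (Theorem \ref{thm:main2b}). Both are proved by reducing a $C^0$ conjugacy to a single scalar cohomological equation over the unperturbed nilflow (via Ratner/expansiveness-type arguments for the conjugacy, and the weak-mixing dichotomy for time changes), and then invoking Lemma \ref{lemma:CE_Heis} to promote $L^2$ solutions to $C^\infty$. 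No KAM scheme is used, and no claim about general perturbations is made.

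Your proposal is a plausible \emph{strategy outline}, and it is consistent in spirit with the paper's motivating remarks (tameness of the solution operator in step $2$, existence of a renormalization for Heisenberg nilflows). But it is not a proof, and the gaps are precisely the ones the paper flags as open. First, your reduction to a fibered picture presupposes that the second flow $\psi$ already sits over a torus with a circle fibration; for an arbitrary smooth flow that is merely topologically conjugate to a nilflow, establishing this structure (and that the conjugacy respects it) is itself a substantial part of the problem, not a preliminary normalization. Second, and decisively, the passage from a $C^0$ conjugacy to any positive a priori regularity --- the step you describe as ``pulling $h$ back along long pieces of renormalized orbits contracts its nonlinear part at a definite rate'' --- is asserted rather than argued, and it is exactly here that no mechanism is currently known. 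The renormalization on $ASL(2,\Z)\backslash ASL(2,\R)$ renormalizes the nilflow, but it does not, by any known argument, produce Denjoy-type bounded-distortion control of an arbitrary topological conjugacy; there is no analogue of the one-dimensional cross-ratio/distortion estimates in this setting. Third, the claim that the distributional obstructions must vanish ``because $h$ is a genuine homeomorphism'' needs a Gottschalk--Hedlund type argument on the linearized equation, which in the paper's restricted settings is available because the conjugacy is already known to come from a transfer function (Lemmas \ref{lemma:conj_skew_shifts2} and \ref{lemma:CE_Heis}); for the full nonlinear problem no such reduction is established. In short: you have correctly identified the expected shape of an eventual proof and the role the paper's ingredients should play in it, but the statement remains, as the paper labels it, a conjecture, and the core low-regularity step is missing from both your sketch and the literature.
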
 
 
 This conjecture has been proposed also by B.~Fayad in personal communications and lectures, and it is further supported by the results of D.~Damjanovic, B.~Fayad and
M.~Saprykina~\cite{DFS} on the rigidity of higher rank step $2$ nilpotent actions. 

A generalization of this conjecture to the higher step nilpotent case is problematic.  

Indeed, as remarked by B.~Fayad (see also \cite{FlaFo07})  in the higher step case the solution operator of the cohomological equation is not 
tame (in the sense of R.~Hamilton) and the lack of tameness is sufficient to make a KAM approach to the conjugacy problem difficult to envision. In addition, no renormalization scheme  is known for higher step nilflows. Even if the conjecture holds, in the higher step case it appears to be completely out of reach.

 \bigskip 
 A toy model for the geometric rigidity problem is perhaps given by the analogous question for the linearization of the conjugacy problem, which in turn can be reduced 
 (at least in the case of unipotent flows and nilflows) to the (scalar) cohomological equation. 
 
 Following A.~Katok \cite{Kat03},  a smooth dynamical system is called  {\it $C^k$-effective } if all measurable (or $C^0$) solutions of the (scalar) cohomological equation 
 are of class $C^k$.  The question of under what condition a diffeomorphism or a flow is $C^1$ or $C^\infty$-effective offers perhaps a guide about what to expect for the harder, 
 non-linear, geometric rigidity problem. 
 
 From this point of view, we immediately see that the case of linear flows on the torus is (at least conjecturally in general) very special.  A conjecture of Katok
 (see \cite{Kat03}, Conjecture 3.6)  states that  only diffeomorphisms smoothly conjugated to Diophantine translations on tori are {\it $C^\infty$- rigid (or cohomology-free)},  in the sense that the cohomological equation has a solution for every smooth functions up to an additive constant.  A similar conjecture can be formulated for flows. 
 
 Katok's conjecture for flows 
 is known up only to dimension $3$ \cite{Fo08}, \cite{Ko09}, \cite{Ma09} but has been proven in general for {\it homogeneous} flows and affine maps \cite{FFRH16}.  Indeed the latter paper proves that all non-toral homogenous dynamical systems have an infinite dimensional space of invariant distributions. 
 
 As suggested by the examples given below, whether a smooth dynamical system is geometrically rigid is related to the structure of the space of its invariant
 distributions. In the examples we understand,  invariant distributions of higher regularity (of order $1$) carry the non-trivial  (polynomial) deviation of ergodic averages, hence obstruct the existence of  continuous and even measurable solutions of the cohomological equation. 
 Thus, $C^k$-cocycle effectiveness, and perhaps $C^k$-rigidity, is
 related to the absence of invariant distributions of lower regularity which may obstruct the existence of $C^k$ solutions for $k\geq 1$. 
 
 Currently, the relation between invariant distribution and deviation of ergodic averages is understood via renormalization methods in a few
 classes of examples. 
 
 It is unclear how to generalize these results to systems with no known renormalization, other than by a scaling approach \cite{FlaFo14}, \cite{FFT16}, 
 by methods based on the intrinsic dynamical features of the system.
 
 \bigskip
 The paper is organized as follows. In \S \ref{sec:T2cex} we give a counterexample to Khanin's rigidity conjecture, which underlines that the Diophantine
 condition on tori is not simply a condition about periodic orbits.
 In \S  \ref{sec:time_skew} we relate conjugacies of time changes and skew-product flows and solutions of the cohomological equation for the base flow. 
 In \S \ref{sec:cohom} we derive converse effectiveness results on cohomological equations  for horocycle flows and translation flows, and an effectiveness result 
 for Heisenberg nilflows. Finally in \S \ref{sec:proofs} we give precise statements and proofs of our main results.

 \section{A counterexample on the $2$-torus}
 \label{sec:T2cex}
 In this section we prove Theorem ~\ref{thm:counterex}. Let $R_\beta:\T\to \T$ denote the rotation by $\beta\in \R$. For irrational $\beta$ let $(q^\beta_n)$ denote its sequence of denominators. Then we have $R_\beta^{q^\beta_n}\to \text{\rm Id}$ uniformly on $\T$. Let now $\beta$ be any (Liouville) irrational number satisfying the following property:
\begin{itemize}
\label{eq:beta}
\item  there exists a $C^\infty$, or even real-analytic, circle diffeomorphism $g:\T\ \to \T$ topologically conjugate to $R_\beta$ such that the conjugacy is not of class $C^1$.
\end{itemize}
Notice that the existence of such $\beta \in \R\setminus \Q$ follows by Arnol'd  \cite{Ar61} result on existence of real- analytic circle diffeomorphisms with irrational rotation number for which the
conjugacy to a rotation is not absolutely continuous (while it is continuous by Denjoy theorem).

For simplicity of notation, we will denote $(q_n):=(q^\beta_n)$. Then we have the following:

\begin{lemma} 
\label{lemma:notC1}

Let $\alpha$ be any Diophantine number such that there exists a diverging sequence $(n_k) \subset \N$ for which 
$$
(q_{n_k}+1)\alpha\to 0.
$$
Then
$T_1:= R_\alpha\times R_\beta$ is topologically conjugate to $T_2:= R_\alpha \times g$, but not $C^1$-conjugate.
\end{lemma}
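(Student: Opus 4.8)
The topological conjugacy is immediate: fix a homeomorphism $h\colon\T\to\T$ with $g=h\circ R_\beta\circ h^{-1}$ (such an $h$ exists by the choice of $\beta$, and is not $C^1$); then $H:=\mathrm{Id}\times h$ is a homeomorphism of $\T^2$ and a one-line computation gives $H\circ T_1=T_2\circ H$. I will also record a standard fact used below: a homeomorphism of $\T$ commuting with an irrational rotation $R_\gamma$ is a rotation (lift it to $\Psi\colon\R\to\R$ and observe that $\Psi(t)-t$ is simultaneously $1$-periodic and $\gamma$-periodic, hence constant). In particular the conjugacy from $R_\beta$ to $g$ is unique up to a rotation, and since $R_c\circ h\in C^1$ if and only if $h\in C^1$, \emph{no} conjugacy from $R_\beta$ to $g$ is $C^1$.

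For the nontrivial direction I would argue by contradiction: assume $\Phi$ is a conjugating homeomorphism, $\Phi\circ T_1=T_2\circ\Phi$ (only continuity of $\Phi$ is needed until the last line). The key step is to use the arithmetic hypothesis on $\alpha$ to extract, from iterates of $T_1$ and $T_2$, maps trivial in the first coordinate. Since $(q_n)$ are the denominators of $\beta$ we have $R_\beta^{q_{n_k}}\to\mathrm{Id}$ uniformly, hence also $g^{q_{n_k}}=h\circ R_\beta^{q_{n_k}}\circ h^{-1}\to\mathrm{Id}$ uniformly; combined with $R_\alpha^{q_{n_k}+1}\to\mathrm{Id}$, which is exactly the assumption $(q_{n_k}+1)\alpha\to 0$, this gives
\[
T_1^{q_{n_k}+1}\longrightarrow \mathrm{Id}\times R_\beta,\qquad T_2^{q_{n_k}+1}\longrightarrow \mathrm{Id}\times g
\]
uniformly on $\T^2$. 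Inserting these into $\Phi\circ T_1^{q_{n_k}+1}=T_2^{q_{n_k}+1}\circ\Phi$ and letting $k\to\infty$ (using uniform continuity of $\Phi$) yields a second intertwining relation $\Phi\circ(\mathrm{Id}\times R_\beta)=(\mathrm{Id}\times g)\circ\Phi$.

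The two intertwining relations then force $\Phi$ to split as a product. Writing $\Phi=(\phi_1,\phi_2)$, the relation with $\mathrm{Id}\times R_\beta$ gives $\phi_1(x,y+\beta)=\phi_1(x,y)$, so by minimality of $R_\beta$ the map $\phi_1$ is independent of $y$, while $\phi_2(x,y+\beta)=g(\phi_2(x,y))$. Feeding $\phi_1=\phi_1(x)$ into $\Phi\circ T_1=T_2\circ\Phi$ gives $\phi_1(x+\alpha)=\phi_1(x)+\alpha$ and $\phi_2(x+\alpha,y+\beta)=g(\phi_2(x,y))=\phi_2(x,y+\beta)$; the last identity says $\phi_2(\cdot,y)$ is $R_\alpha$-invariant, hence independent of $x$ by minimality of $R_\alpha$. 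Thus $\Phi=\phi_1\times\phi_2$, where $\phi_1$ is a circle homeomorphism commuting with $R_\alpha$ (a rotation, by the recorded fact) and $\phi_2$ is a circle homeomorphism with $\phi_2\circ R_\beta=g\circ\phi_2$. If $\Phi$ were $C^1$, then $\phi_2$ would be a $C^1$ conjugacy from $R_\beta$ to $g$, contradicting the last sentence of the first paragraph; in fact this shows every $C^0$ conjugacy between $T_1$ and $T_2$ has the form $R_c\times(h\circ R_{c'})$, so none is $C^1$.

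The main obstacle is the single genuinely nonroutine idea: realizing that one should test the conjugacy against the iterates $T_i^{q_{n_k}+1}$ and that the hypothesis $(q_{n_k}+1)\alpha\to 0$, combined with the convergence $R_\beta^{q_{n_k}}\to\mathrm{Id}$ coming from $(q_n)$ being the denominators of $\beta$, makes these iterates converge to maps trivial in the first factor; everything afterwards — the product structure of $\Phi$ and the contradiction — is a routine consequence of minimality of irrational rotations. For the lemma not to be vacuous (and hence for Theorem~\ref{thm:counterex}) one separately needs a Diophantine $\alpha$ with $(q_{n_k}+1)\alpha\to 0$ along some sequence $n_k$, which is an elementary construction: choose the partial quotients of $\alpha$ bounded while, along a sufficiently sparse sequence $n_k$, forcing $q_{n_k}\alpha$ to lie within $o(1)$ of $-\alpha$ modulo $1$.
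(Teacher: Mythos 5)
Your proof is correct and rests on the same key step as the paper's: iterate the conjugacy relation through the powers $q_{n_k}+1$ and pass to the limit, using $(q_{n_k}+1)\alpha\to 0$ and $q_{n_k}\beta\to 0$, to obtain the auxiliary intertwining $\Phi\circ(\mathrm{Id}\times R_\beta)=(\mathrm{Id}\times g)\circ\Phi$. Where you go further is in deriving from the two intertwining relations and the minimality of $R_\alpha$, $R_\beta$ that any topological conjugacy splits as a product $\phi_1\times\phi_2$, with $\phi_1$ a rotation and $\phi_2$ a circle homeomorphism conjugating $R_\beta$ to $g$; the paper instead works only with the second component $F_2$, fixes $x$, and declares $F_2(x,\cdot)$ to be a $C^1$ conjugacy between $R_\beta$ and $g$ without verifying that this restriction is a homeomorphism of $\T$. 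That verification is in fact automatic (any continuous $\psi$ with $\psi\circ R_\beta=g\circ\psi$ has the form $h\circ R_c$, so has degree one and is bijective), so the paper's shorter argument is sound, but your route makes the point explicit and as a bonus yields the classification of \emph{all} topological conjugacies between $T_1$ and $T_2$ as $R_c\times(h\circ R_{c'})$, which directly shows none of them is $C^1$.
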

\begin{proof} Notice that topological conjugacy follows immediately by the assumptions: if $H: \T \to \T$ is a homeomorphism conjugating $R_\beta$ and $g$, then 
$\text{ \rm Id} \times H$ is a homeomorphism conjugating $T_1= R_\alpha\times R_\beta$ and  $T_2= R_\alpha\times g$.  

To prove that there is no differentiable conjugacy, we argue by contradiction. Let us assume that there exists a $C^1$ diffeomorphism $F=(F_1,F_2):\T^2\to \T^2$ which conjugates the two above maps, i. e. 
$$
F\circ (R_{\alpha}\times R_\beta) =(R_\alpha\times g)\circ F\,.
$$

By looking only at the second coordinate, the above conjugacy identity then implies that
$$
F_2(x+\alpha,y+\beta)= g(F_2(x,y)) \,, \quad \text{ for all }  (x,y)\in \T^2\,.
$$
By iterating the above identity we derive that, for all $k\in \N$ and for all $(x,y)\in \T^2$,
$$
F_2(x+(q_{n_k}+1)\alpha,y+(q_{n_k}+1)\beta)=g^{q_{n_k}+1}(F_2(x,y)).
$$
Since by assumption $q_{n_k}\beta\to 0$, and so $R^{q_{n_k}}_\beta\to \text{\rm Id}$, and  $H\circ R_\beta \circ H^{-1}=g$, 
we have that 
$$
g^{q_{n_k}+1}=H\circ R^{q_{n_k}+1}_\beta \circ H^{-1}=H\circ R^{q_{n_k}}_\beta \circ H^{-1}\circ H\circ R_\beta \circ H^{-1}\to g\,.
$$
By taking the limit as $n_k\to \infty$ in the above conjugation identity, we derive that  
$$
F_2(x,y+\beta)=g(F_2(x,y)) \,, \quad \text{ for all }  (x,y)\in \T^2\,,
$$
but then, if  for any fixed $x\in \T$ we let $\tilde{H}(\cdot):=F_2(x,\cdot)$, we get the conjugation identity
$$
\tilde{H}\circ R_\beta=g\circ\tilde{H}.
$$
Moreover, $\tilde H$ is of class $C^1$ (since $F$ was assumed to be of class $C^1$). This however contradicts the assumption on the diffeomorphism 
$g: \T \to \T$ that its conjugacy $H$ to the rotation $R_\beta$ is not of class $C^1$. The proof of the lemma is finished.
\end{proof}

Finally, it is enough to show that the set of Diophantine numbers satisfying $(q_{n_k}+1)\alpha\to 0$ for some subsequence $(n_k)\subset \N$ is non-empty:
\begin{lemma} 
\label{lemma:full_meas}

For any diverging sequence ${\bf r}:=(r_n)$  of positive integers, the set
$$
A({\bf r}):=\{\alpha \in \T  \;:\; r_{n_k}\alpha\to 0\text{ for some diverging sequence } (n_k)\subset \N\},
$$
has  full Lebesgue measure.
\end{lemma}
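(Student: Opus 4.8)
The plan is to rewrite $A({\bf r})$ as a countable intersection of $\limsup$-sets and then show that each of these sets has full Lebesgue measure by an elementary equidistribution argument. For $\varepsilon\in(0,1/2)$ put $B_n(\varepsilon):=\{\alpha\in\T:\ \|r_n\alpha\|<\varepsilon\}$, where $\|\cdot\|$ denotes the distance to the nearest integer, and $G_N(\varepsilon):=\bigcup_{n\ge N}B_n(\varepsilon)$, so that $\limsup_n B_n(\varepsilon)=\bigcap_N G_N(\varepsilon)$; all these sets are open, hence Borel. First I would observe that $\alpha\in A({\bf r})$ exactly when, for every $j\in\N$, one has $\alpha\in B_n(1/j)$ for infinitely many $n$: the forward implication is immediate, and the converse follows by a diagonal extraction producing a diverging index sequence $(n_k)$ with $\|r_{n_k}\alpha\|\to 0$. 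Hence $A({\bf r})=\bigcap_{j\ge1}\limsup_n B_n(1/j)$, and since a countable intersection of full-measure sets has full measure, it suffices to prove $|G_N(\varepsilon)|=1$ for every $N$ and every fixed $\varepsilon\in(0,1/2)$ (the $G_N(\varepsilon)$ being nested, this yields $|\bigcap_N G_N(\varepsilon)|=1$).

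The heart of the matter is this last claim. In $[0,1)$ the set $B_n(\varepsilon)$ is the union of $r_n$ arcs of length $2\varepsilon/r_n$ centred at the points $k/r_n$; it is $1/r_n$-periodic, so, since $r_n\to\infty$, it meets every arc $I$ in measure $2\varepsilon|I|-O(1/r_n)$ — in other words $B_n(\varepsilon)$ ``equidistributes'' with density $2\varepsilon$ as $n\to\infty$. Suppose $|G_N(\varepsilon)|<1$. By the Lebesgue density theorem pick a density point $\alpha_0$ of the positive-measure set $G_N(\varepsilon)^c$, fix $\eta\in(0,\tfrac{2\varepsilon}{1+2\varepsilon})$, choose a short arc $I\ni\alpha_0$ with $|G_N(\varepsilon)^c\cap I|\ge(1-\eta)|I|$, and then choose $n\ge N$ with $r_n$ so large (relative to $|I|$ and $\eta$) that $|B_n(\varepsilon)\cap I|\ge 2\varepsilon(1-\eta)|I|$. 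Since $G_N(\varepsilon)^c\subseteq B_n(\varepsilon)^c$, the sets $B_n(\varepsilon)\cap I$ and $G_N(\varepsilon)^c\cap I$ are disjoint subsets of $I$, so $|I|\ge(1-\eta)(1+2\varepsilon)|I|$, contradicting the choice of $\eta$. Therefore $|G_N(\varepsilon)|=1$ for all $N$, and applying this with $\varepsilon=1/j$ for all $j\ge3$ shows, via the first paragraph, that $A({\bf r})$ has full measure. (If $({\bf r})$ is only assumed unbounded rather than divergent, one first passes to a subsequence tending to $\infty$, which only shrinks $A({\bf r})$.)

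I expect the only non-routine step to be the claim $|G_N(\varepsilon)|=1$: a single term gives merely $|G_N(\varepsilon)|\ge 2\varepsilon$, and the natural instinct is to invoke a quasi-independent Borel--Cantelli lemma together with $\gcd$-estimates for $|B_m(\varepsilon)\cap B_n(\varepsilon)|$. I would avoid that machinery: the periodicity of $B_n(\varepsilon)$ on the vanishing scale $1/r_n$ already forces $B_n(\varepsilon)$ to equidistribute, and that alone suffices for the density-point argument above (one could equivalently phrase it as weak-$*$ convergence of the indicators $\mathbf 1_{B_n(\varepsilon)}$ to the constant $2\varepsilon$, tested against $\mathbf 1_{G_N(\varepsilon)^c}$). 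The one subtlety is the order of the quantifiers: $\alpha_0$ and $I$ must be fixed before $n$, so that ``$r_n$ large'' can be arranged relative to the length of $I$.
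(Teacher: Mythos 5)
Your proof is correct, but it is structured quite differently from the paper's. The paper passes to a rapidly growing subsequence of $(r_n)$, introduces a sequence $\epsilon_n\to 0$ calibrated so that the sets $A_n=\{\alpha:\|r_n\alpha\|_\Z\le\epsilon_n\}$ are pairwise quasi-independent (after a $\gcd$-type estimate $\mathrm{Leb}(A_n\cap A_m)\le\mathrm{Leb}(A_m)(\mathrm{Leb}(A_n)+r_n/r_m)$), and then invokes the Kochen--Stone inequality to get $\mathrm{Leb}(\limsup A_n)\ge 1$; since $\epsilon_n\to 0$, the $\limsup$ set is contained in $A({\bf r})$. You instead decouple the problem: you write $A({\bf r})=\bigcap_{j}\limsup_n B_n(1/j)$ exactly, with $\varepsilon=1/j$ fixed at each stage, and observe that it already suffices to prove that each \emph{tail union} $G_N(\varepsilon)=\bigcup_{n\ge N}B_n(\varepsilon)$ has full measure; the intersections over $N$ and over $j$ are then automatic. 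The claim $|G_N(\varepsilon)|=1$ you prove by a Lebesgue density-point argument exploiting only that $B_n(\varepsilon)$ is $1/r_n$-periodic with relative density $2\varepsilon$, so that it intersects any fixed short arc in measure $\approx 2\varepsilon|I|$ once $r_n$ is large. This is genuinely more elementary than the paper's route: it avoids Borel--Cantelli entirely, needs no careful construction of $(\epsilon_n)$, and is insensitive to arithmetic correlations between the $r_n$ (the $\gcd$ issues the paper's pairwise-intersection estimate is designed to control never arise because you only ever use a single $B_n$ at a time). The paper's approach, by contrast, is the more standard ``quasi-independent second Borel--Cantelli'' template and would also yield quantitative information if one wanted it, but at the cost of the extra bookkeeping. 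One small caveat on quantifier order, which you already flag: the arc $I$ (hence $|I|$) and $\eta$ must be fixed before $n$ is chosen, precisely so that ``$r_n$ large relative to $1/|I|$'' is available; your write-up respects this.
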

\begin{proof}  By replacing the sequence ${\bf r}$ by a subsequence, it is not restrictive to assume that the sequence $(r_{n+1}/r_n)$ is divergent (and increasing).
Let $(\epsilon_n) \subset (0,1)$ be a sequence decreasing to $0$ in $\R$ such that 
and 
$$
\frac{r_{n+1}}{r_n}  \epsilon_n   \geq 1\,, \quad   \text{ for all }  n\in \N\,,
$$
and such that the series $\sum \epsilon_n$ is divergent. For all $n\in \N$, the set
$$
A_n=\{\alpha \in \T :   \| r_{n}\alpha\|_\Z \leq \epsilon_n\}
$$
is equal to the union of $r_n$ disjoint intervals of length $2 \epsilon_n/r_n$, hence in particular has Lebesgue measure $2\epsilon_n$. 
It follows that, for every $m>n$  and for every maximal interval $I_n \subset  A_n$, since $2r_m \epsilon_n/r_n \geq 2$, then
$$
\text{\rm Leb} (I_n \cap A_m) \leq  2 \epsilon_m   \text{\rm Leb} (I_n) +  \frac{\epsilon_m}{r_m}  =   \text{\rm Leb}(A_m)\left(  \text{\rm Leb} (I_n) + \frac{1}{r_m} \right)   \,,
$$
hence the sets $A_m$ and $A_n$ are pairwise independent, in the sense that
$$
\text{\rm Leb} (A_n \cap A_m) \leq     \text{\rm Leb} (A_m)\left(  \text{\rm Leb}(A_n)  +  \frac{r_n} {r_m}  \right) \,.
$$
By  replacing the sequence ${\bf r}$ by a subsequence, it is then possible to choose the sequence $(\epsilon_n)$ with the property that
$$
 \frac{   \sum_{k=1}^n  \frac{1}{r_k}    \sum_{h=1}^{k-1}  r_h \text{\rm Leb}( A_h ) }{ (\sum_{k=1}^n  \text{\rm Leb}( A_k ) )^2  }   \to  0.
$$
 The results then follows from the Kochen-Stone inequality, since
$$
\text{\rm Leb}( A({\bf r}  ) ) = \text{\rm Leb}(\liminf_{n\to \infty} A_n)  \geq  \limsup_{n\to \infty} \frac{ (\sum_{k=1}^n  \text{\rm Leb}( A_k ) )^2  }{ \sum_{k,h=1}^n  \text{\rm Leb}( A_k \cap A_h ) } \geq 1\,.
$$
\end{proof}

\begin{proof} [Proof of Theorem \ref{thm:counterex}]
Let $\beta \in \R\setminus \Q$ be a Liouvillean irrational number such that there exists a real-analytic diffeomorphism of the circle $g$, of rotation number 
$\rho(g)=\beta$,  which is not absolutely continuous conjugate to the rotation $R_\beta$.  Let $(q_n^\beta)$ denothe the sequence of the the denominators
of the continued fraction expansion of $\beta$, and let  ${\bf r} = (r_n)$  denote the sequence
$$
r_n :=    q_n^\beta +1 \,, \quad \text{ for all } n\in \N\,. 
$$
Since $\bf r$ is divergent, by Lemma~\ref{lemma:full_meas} there exists a Diophantine number $\alpha\in \R\setminus \Q$
(of any exponent $\tau >1$) such that there exists a sequence $(n_k) \subset \N$ with
$$
(q^\beta_{n_k} +1) \alpha \to  0 \,.
$$
Let then $T_1:= R_\alpha \times R_\beta$  and $T_2= R_\alpha \times g$ on $\T^2$.  By Lemma \ref{lemma:notC1} the smooth (real-analytic) diffeomorphisms
$T_1$ and $T_2$ are topologically conjugate, but not $C^1$  conjugate (in fact, the conjugacy is not absolutely continuous). 

Finaly, since $\alpha$ is Diophantine (of exponent $\tau>1$), there exists a constant $C_\alpha>0$ such that, for all  $(x,y) \in \T^2$ and all $n\in \N$, 
$$
  \label{eq:Khanin_CC}
 \min \{   \text{dist}_{\T^2}  (T_1^n (x,y), (x,y) ) ,   \text{dist}_{\T^2}  (T_2^n (x,y), (x,y) )\}    \geq   \text{dist}_{\T}  (R_\alpha^n x, x) \geq    C_\alpha n^{-\tau}   \,.
$$
so that the condition on close returns in formula~\eqref{eq:Khanin_C}  is verified.  We have thus constructed a counterexample to Khanin's conjecture and our argument
is complete.
 \end{proof}
 
 \section{Time changes and Skew-product flows}
 \label{sec:time_skew}
 
 In this section we introduce time-changes and skew product flows and examine the relation between conjugacies and cohomology, focusing in particular on 
 time changes of horocycle flows and  nilflows and on skew product flows with circle fibers over horocycle flows, nilflows and Kochergin flows.
 
 \subsection{Time changes} 
 
Let $\phi_\R$ denote a smooth flow on a compact manifold $M$  and  let $f \in C^0(M)$ be a strictly positive function. Let $\phi^f_\R$ be the corresponding 
time change of a $\phi_\R$ on $M$ . We recall that the flow $\phi^f_\R$ is defined as follows:
$$
 \phi^f_t(x) :=\phi_{w(x,t)}(x),  \quad \text{ for all } (x,t) \in M \times \R\,,
$$
with $w: M\times \R \to \R$ the unique function satisfying the identity
$$
  \int_0^{w(x,t)} f\left( \phi_u (x) \right) \, d u =t \,, \quad \text{ for all }  (x,t)\in M \times \R\,.
$$
We review well-known results about conjugacy of time changes and cohomology.

 \begin{lemma} 
  \label{lemma:time_changes1} Let $\phi_\R$ be any smooth flow and let $f, g \in C^k (M)$ be positive functions.  
If the cohomological equation
 $$
 u \circ \phi_t - u =  \int_0^t  (f-g) \circ \phi_s ds \,, \quad \text{ for all } t \in \R\,,
 $$
  has a solution $u \in C^l(M)$,  then the time changes $\phi^f_\R$ and  $ \phi^g_\R$ are $C^l$ conjugate. 
 \end{lemma}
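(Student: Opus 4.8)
The plan is to realize the conjugacy explicitly as a displacement along the $\phi_\R$-orbits, the size of the displacement being governed by the solution $u$. Write $w_f$ (resp.\ $w_g$) for the function $w$ appearing in the definition of the time change of $\phi_\R$ associated to $f$ (resp.\ $g$), set $G_g(x,r):=\int_0^r g(\phi_s(x))\,ds$, and let $X$ denote the generator of $\phi_\R$, so that differentiating the hypothesis gives $Xu=f-g$. Since $M$ is compact and $g$ is continuous and strictly positive, $g\ge\delta>0$ for some $\delta$; hence for each fixed $x$ the map $r\mapsto G_g(x,r)$ is a strictly increasing $C^k$ bijection of $\R$ onto itself with $\partial_r G_g(x,r)=g(\phi_r(x))\ge\delta$, and by the inverse function theorem (applied to $(x,r)\mapsto(x,G_g(x,r))$) its inverse $(x,\eta)\mapsto G_g(x,\cdot)^{-1}(\eta)$ is again of class $C^k$. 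Composing with $u\in C^l(M)$ then yields a function $v$ of class $C^{\min(k,l)}$ --- hence of class $C^l$ when $l\le k$, in particular when $f,g$ are smooth --- uniquely determined by
$$
\int_0^{v(x)} g(\phi_s(x))\,ds = u(x)\,,\qquad x\in M\,,
$$
and the proposed conjugacy is $H(x):=\phi_{v(x)}(x)$. This choice is essentially forced: for a map of the form $\phi_v$, the infinitesimal conjugacy equation reads $g(\phi_{v(x)}(x))(1+Xv(x))=f(x)$, and since $X\bigl(G_g(\cdot,v(\cdot))\bigr)(x)=g(\phi_{v(x)}(x))(1+Xv(x))-g(x)$, that equation becomes $X\bigl(G_g(\cdot,v(\cdot))\bigr)=f-g=Xu$, which the relation $G_g(x,v(x))=u(x)$ solves.

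Next I would verify the conjugacy identity $H\circ\phi^f_t=\phi^g_t\circ H$ for all $t\in\R$. Fixing $x$ and $t$ and setting $a:=w_f(x,t)$, so that $\int_0^a f(\phi_s(x))\,ds=t$, the group law $\phi_r\circ\phi_{r'}=\phi_{r+r'}$ rewrites the two sides as $\phi_{a+v(\phi_a(x))}(x)$ and $\phi_{v(x)+w_g(\phi_{v(x)}(x),t)}(x)$ respectively. Using the cocycle identity $G_g(\phi_a(x),\cdot)=G_g(x,a+\cdot)-G_g(x,a)$ to rewrite $v(\phi_a(x))$, the defining integral relation of $w_g$ to rewrite $w_g(\phi_{v(x)}(x),t)$, and then the integrated cohomological equation in the form $u(\phi_a(x))-u(x)=\int_0^a(f-g)(\phi_s(x))\,ds=t-G_g(x,a)$, one finds that each side equals $\phi_{\xi}(x)$ with $\xi$ characterized by $G_g(x,\xi)=u(x)+t$. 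This cancellation is the crux of the argument; the rest is bookkeeping.

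It remains to see that $H$ is a $C^l$ diffeomorphism, which I would do by exhibiting its inverse. Since $-u\in C^l(M)$ and $X(-u)=g-f$, the same construction with $f$ and $g$ interchanged gives $H'(x):=\phi_{v'(x)}(x)$, where $\int_0^{v'(x)}f(\phi_s(x))\,ds=-u(x)$; by the previous step (applied with $f\leftrightarrow g$), $H'$ conjugates $\phi^g_\R$ to $\phi^f_\R$. A short computation using only the defining relations for $v,v'$ and the cohomological equation yields $v'(x)+v(\phi_{v'(x)}(x))=0$ and, symmetrically, $v(x)+v'(\phi_{v(x)}(x))=0$, whence $H\circ H'=H'\circ H=\mathrm{id}_M$. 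Therefore $H$ is a $C^l$ diffeomorphism of $M$ conjugating $\phi^f_\R$ and $\phi^g_\R$, which proves the lemma. (Geometrically, on the $\phi_\R$-orbit of any point $x_0$, in the natural time parameter $s$, $H$ acts as the increasing bijection $s\mapsto G_g(x_0,\cdot)^{-1}\!\bigl(u(x_0)+\int_0^s f(\phi_\sigma(x_0))\,d\sigma\bigr)$ of $\R$; on a periodic orbit of $\phi_\R$-period $p$ the cohomological equation forces $\int_0^p f(\phi_s(x_0))\,ds=\int_0^p g(\phi_s(x_0))\,ds$, so this descends to a circle diffeomorphism.)

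The only genuine difficulty is computational: pinning down the right displacement $v$, i.e.\ disentangling how the two reparametrization cocycles $w_f$ and $w_g$ combine with the primitive $u$. Once the relation $G_g(x,v(x))=u(x)$ has been identified, the verification requires no analytic input beyond the elementary fact that each $G_g(x,\cdot)$ is a bijection of $\R$ onto itself.
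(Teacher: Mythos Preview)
Your proof is correct and complete. The paper does not actually prove this lemma: it simply cites \cite{AFRU21}, Lemma~2.1, for the measurable case and remarks that the regularity of the conjugacy follows from that argument. What you have written is precisely the standard explicit construction underlying that reference --- defining the conjugacy as a displacement $H(x)=\phi_{v(x)}(x)$ along $\phi_\R$-orbits with $v$ determined implicitly by $\int_0^{v(x)} g(\phi_s(x))\,ds = u(x)$ --- and your verification of the identity $H\circ\phi^f_t=\phi^g_t\circ H$ via the cocycle relation for $G_g$ and the integrated cohomological equation is clean and accurate.

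Two minor remarks. First, your observation that the conjugacy is only $C^{\min(k,l)}$, hence $C^l$ only under the (unstated) assumption $l\le k$, is a genuine refinement of the lemma as phrased; in all the paper's applications $f,g\in C^\infty$, so this never bites. Second, your construction of the inverse $H'$ by swapping $f\leftrightarrow g$ and $u\leftrightarrow -u$, together with the verification $v'(x)+v(\phi_{v'(x)}(x))=0$, is a tidy way to show $H$ is a diffeomorphism without appealing to local invertibility arguments; this is slightly more self-contained than what one usually sees.
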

 We refer to \cite{AFRU21}, Lemma 2.1,  for a proof of this classical fundamental result in the measurable case.  It follows immediately
 from the argument that the conjugacy is as regular as the solution of the cohomological equation. 

 \smallskip 
Whether any conjugacy of time changes is necessarily given by cohomology is  a deeper question, especially for measurable conjugacies. 

A classical result of M.~Ratner \cite{Rat87}, revisited in \cite{FlaFo19}  (see also \cite{KLU20} for a completely different approach) answers the above question in the affirmative for
time-changes of a horocycle flow.

We recall that a {\it classical horocycle flow}  on the unit tangent bundle of a hyperbolic surface can be smoothly identified  with the homogeneous flow generated by a unipotent vector field in the Lie algebra of the Lie group $PSL(2, \R)$ on the homogeneous space $\Gamma \backslash PSL(2, \R)$, quotient of the group over a lattice $\Gamma <PSL(2,\R)$.

\begin{theorem} (\cite{Rat82}, Th. 2 and Th.3)  \label{thm:Ratner_rig}  Let $\phi_\R$ be the classical horocycle flow on a compact quotient $M:= \Gamma \backslash PSL(2, \R)$ 
and let $f$ and $g$ be positive integrable smooth functions with the same mean on $M$.  If the time changes $\phi^f_\R$ and $\phi^g_\R$ are measurably conjugate, then there exist a map $\psi_C: M\to M$, projection of the inner automorphism defined by a group element $C \in PSL(2, \R)$  with $C\Gamma C^{-1} =\Gamma$, and a measurable 
function $u:M\to \R$ such that
$$
u \circ \phi_t  - u  = \int_0^t  (f \circ \psi_C  - g) \circ \phi_s ds \,.
$$
In addition, if  for a given $T>0$ the time $T$-maps $\phi^f_T$ and $\phi^g_T$ of the time changes $\phi^f_\R$ and $\phi^g_\R$  are isomorphic, then the time changes  are isomorphic as flows and the above conclusion still holds.

\end{theorem}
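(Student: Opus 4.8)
The plan is to follow Ratner's strategy, whose engine is her \emph{$H$-property} (quantitative shearing: typical nearby orbits diverge by a slowly varying reparametrization whose drift lies in the geodesic/centralizer direction). Two preliminary remarks set the stage. First, since $f$ and $g$ are bounded away from $0$ and $+\infty$, the time changes $\phi^f_\R$ and $\phi^g_\R$ have exactly the orbits of the horocycle flow $\phi_\R$; second, a smooth time change of a flow with the $H$-property again has the $H$-property, so both $\phi^f_\R$ and $\phi^g_\R$ are uniquely ergodic and carry the same shearing as $\phi_\R$. Consequently a measurable isomorphism $\Phi$ with $\Phi\circ\phi^f_t=\phi^g_t\circ\Phi$ automatically sends horocycle orbits to horocycle orbits, hence induces a measurable reassignment $\sigma$ of the (ergodic) orbit space. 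The proof then has three steps: (i) use the $H$-property to show $\sigma$ is \emph{algebraic}, namely $\Phi(x)=\phi_{u(x)}(\psi_C(x))$ for a.e.\ $x$, where $\psi_C$ is right translation by some $C\in PSL(2,\R)$ normalizing $\Gamma$ (and, it will turn out, normalizing the horocycle subgroup) and $u\colon M\to\R$ is measurable; (ii) substitute this into the intertwining relation to extract the cohomological equation; (iii) reduce the time-$T$-map assertion to the flow assertion.

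For step (i), I would fix a Lusin set $K\subset M$ of measure close to $1$ on which $\Phi$ and $\Phi^{-1}$ are uniformly continuous, together with a density point, and consider a positive-measure family of nearby pairs $(x,y)\in K\times K$ with $y$ off the horocycle orbit of $x$. Applying the $H$-property of $\phi^f_\R$, on a subset of $[0,N]$ of nearly full density (with $N\to\infty$ as $d(x,y)\to0$) the $\phi^f$-orbit of $y$ shadows that of $x$ after a small, slowly varying reparametrization with drift in the centralizer direction; pushing this forward by $\Phi$ and comparing with the shadowing produced by the $H$-property of $\phi^g_\R$ applied directly to $(\Phi x,\Phi y)$, the two reparametrizations are forced to be compatible, and in the limit $d(x,y)\to0$, using Fubini and a.e.\ differentiability, this rigidity promotes to the claimed pointwise form of $\Phi$ --- the drift direction pinning $C$ to the normalizer of the horocycle subgroup, and $C\Gamma C^{-1}=\Gamma$ being forced by well-definedness of $\psi_C$ on $M$. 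This step is the technical heart of the theorem and the one I expect to be the main obstacle: establishing and exploiting the $H$-property for a genuine time change (not the homogeneous model) is delicate, and converting an almost-everywhere, large-measure statement about pairs of points into a clean pointwise description of $\Phi$ requires careful bookkeeping of the reparametrization cocycles. (An alternative route, following \cite{FlaFo19}, replaces the $H$-property by the classification of invariant distributions for the horocycle flow and its renormalization by the geodesic flow; the bottleneck is analogous.)

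For step (ii), substitute $\Phi(x)=\phi_{u(x)}(\psi_C(x))$ into $\Phi\circ\phi^f_t=\phi^g_t\circ\Phi$, expand both sides using the defining identity $\int_0^{w^h(x,t)}h(\phi_s x)\,ds=t$ for the cocycles $w^f,w^g$ of the two time changes, and use that $\psi_C$ intertwines $\phi_\R$ with a possibly rescaled copy of itself, the harmless unipotent part of $C$ being absorbed into $u$ and the equal-mean hypothesis excluding the rescaling; reading off the horocycle parameter then yields $u\circ\phi_t-u=\int_0^t(f\circ\psi_C-g)\circ\phi_s\,ds$ for all $t\in\R$. Indeed $\mu(f)=\mu(g)$, hence $\mu(f\circ\psi_C)=\mu(g)$ since $\psi_C$ preserves Haar, is precisely what gives the right-hand cocycle vanishing asymptotic average, making the equation consistent. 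Finally, for step (iii), if only $\phi^f_T$ and $\phi^g_T$ are isomorphic via some $\Theta$, I would use that the centralizer of $\phi^f_T$ in $\mathrm{Aut}(M,\mu^f)$ is exactly the flow $\{\phi^f_s:s\in\R\}$ (again via the $H$-property: anything commuting with $\phi^f_T$ must, by the shearing, be a flow element); then $\Theta$ conjugates $\{\phi^f_s\}$ into the centralizer $\{\phi^g_r\}$ of $\phi^g_T$ through a measurable additive homomorphism of $\R$ fixing $T$, necessarily the identity, so $\Theta$ conjugates the flows and step (ii) applies.
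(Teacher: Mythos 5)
This theorem is stated in the paper purely as a citation of Ratner's work (Theorems 2 and 3 of \cite{Rat82}, with the reparametrization and time-$T$-map content coming from \cite{Rat87}); the paper supplies no proof of its own, so there is no internal argument to compare against. Your sketch is nonetheless a fair high-level reconstruction of Ratner's actual strategy: the $H$-property (quantitative shearing of nearby horocycle orbits) is indeed the engine; one shows it survives passage to a smooth time change, uses it to force a measurable isomorphism to respect horocycle orbits and take the algebraic form $x\mapsto\phi_{u(x)}(\psi_C(x))$, and substitutes into the intertwining relation to extract the cohomological equation. You also correctly flag the hard technical steps (establishing the $H$-property for a genuine time change, and converting Lusin-set, almost-everywhere information about pairs of points into a pointwise formula for $\Phi$).

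One inaccuracy in step (iii): the measurable centralizer of $\phi^f_T$ is not exactly the one-parameter flow $\{\phi^f_s\}$ but the flow together with a finite group of algebraic automorphisms --- precisely the content of Ratner's Corollary 2 of \cite{Rat82}, quoted in the paper a few lines below this theorem, which says the essential centralizer is finite. The reduction of the time-$T$-map statement to the flow statement still works (one argues that $\Theta$ carries the connected component of the identity in the centralizer, namely the flow, onto the corresponding connected component), but the finite part is exactly where the element $C\in PSL(2,\R)$ with $C\Gamma C^{-1}=\Gamma$ enters the conclusion, so it cannot simply be discarded. It is also not merely parenthetical, as you suggest, that $C$ should normalize the horocycle subgroup: if $\psi_C$ did not preserve the horocycle orbit foliation, the expression $f\circ\psi_C$ would not define a cocycle over the horocycle flow and the cohomological equation in the statement would be meaningless. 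None of this changes the overall shape of your argument, which does match Ratner's.
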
 

 A result analogous to Ratner's above theorem can be derived for general nilflows as a corollary of the mixing result for time changes proved in
 \cite{AFRU21}.  
 
 We recall the a {\it nilflow} is a homogeneous flow generated by an element of a Lie algebra of a nilpotent Lie group $N$ on the homogenous
 space $M=\Gamma \backslash N$,  quotient of the group over a lattice $\Gamma <N$, which is called a {\it nilmanifold}. All finite volume nilmanifolds are compact.
 
 \begin{theorem}  \cite{AFRU21} For every uniquely ergodic nilflow $\phi_\R$ on any compact nilmanifold $M$ there exists a dense subspace $\mathcal P \subset C^\infty(M)$ 
 (trigonometric polynomials) such that for any positive $f\in \mathcal P$ either $f$ is measurably cohomologous to a constant or the  time change $\phi^f_\R$ is mixing.
 In particular, the time change $\phi^f_\R$  is measurably conjugated to the nilflow $\phi_\R$  if and only if $f$ is measurably cohomologous to a constant.
  \end{theorem}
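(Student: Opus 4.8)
The statement combines a mixing dichotomy for time changes by trigonometric polynomials --- the substance, and the content of \cite{AFRU21} --- with the ``in particular'' corollary on measurable conjugacy, which follows softly from it; the plan is to deduce the corollary first and then sketch the dichotomy.

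For the corollary, normalize $\int_M f\,d\mathrm{vol}=1$. If $f$ is measurably cohomologous to a constant, then that constant is necessarily the mean $\bar f=1$, so $f-1$ is a measurable coboundary, and by the measurable version of Lemma~\ref{lemma:time_changes1} (see \cite{AFRU21}, Lemma~2.1) the time change $\phi^f_\R$ is measurably conjugate to $\phi^{1}_\R=\phi_\R$. Conversely, suppose $\phi^f_\R$ is measurably conjugate to $\phi_\R$. A uniquely ergodic nilflow projects to a minimal (irrational) linear flow on its maximal torus factor, hence has nonconstant eigenfunctions (pullbacks of nontrivial characters) and is not weakly mixing, in particular not mixing; since mixing is a conjugacy invariant, $\phi^f_\R$ is not mixing either, and the dichotomy forces $f$ to be measurably cohomologous to a constant.

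For the dichotomy, let $\mathcal P\subset C^\infty(M)$ be as in \cite{AFRU21}: a dense subspace, built from the Kirillov decomposition of $L^2(M)$, for which membership of a function in the range of the Lie derivative $X$ generating $\phi_\R$ is controlled by a \emph{finite} family of $\phi_\R$-invariant distributions in the sense of \cite{FlaFo07}. Fix a positive $f\in\mathcal P$ with $\bar f=1$. The mechanism for mixing is transverse shearing. Choose $Y\in\mathfrak n$ transverse to $X$; using the cocycle identity $\int_0^{w(x,t)}f(\phi_u x)\,du=t$ together with $\mathrm{Ad}(\exp sX)Y=\sum_{j\ge 0}\tfrac{s^j}{j!}\,\mathrm{ad}(X)^jY$, a polynomial in $s$ of degree less than the nilpotency step of $N$, one computes the transverse derivative $\partial_\tau|_{\tau=0}\,w(\exp(\tau Y)x,t)$ and finds that it is a polynomial-in-$t$ combination of ``higher Birkhoff integrals'' $\int_0^t p(s)\,\big(\mathrm{ad}(X)^jY\,f\big)(\phi_s x)\,ds$. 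By the renormalization scheme for nilflows and its deviation estimates (\cite{FlaFo06}, \cite{FlaFo07}, and \cite{AFRU21}), for a.e.\ $x$ each such integral is asymptotic to $c_j\,\mathcal D_j(f)\,t^{\gamma_j}$ up to strictly lower-order terms, for explicit exponents $\gamma_j$ and $\phi_\R$-invariant distributions $\mathcal D_j$. Hence, unless every $\mathcal D_j$ annihilates $f-\bar f$, the transverse stretch tends to infinity at a definite polynomial rate, and the ``mixing via stretching of transversals'' argument (as for horocycle flows, going back to Marcus and Ratner, and carried out for nilflows in \cite{AFRU21}) shows that $\phi^f_t$-images of short arcs in the $Y$-direction equidistribute by unique ergodicity of $\phi_\R$; this gives $\int (F\circ\phi^f_t)\,G\,d\mu_f\to\big(\int F\,d\mu_f\big)\big(\int G\,d\mu_f\big)$, i.e.\ $\phi^f_\R$ is mixing. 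If instead every $\mathcal D_j$ kills $f-\bar f$, then by the choice of $\mathcal P$ and \cite{FlaFo07} the function $f-\bar f$ is a smooth, hence measurable, coboundary.

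The main obstacle is the shearing estimate in the higher-step case: one must organize the whole hierarchy of invariant distributions and scaling exponents of \cite{FlaFo07} so as to isolate a single dominant term with the right exponent $\gamma_j>1$, and then show that the many lower-order Birkhoff integrals produced by the noncommutativity of $N$ are genuinely negligible --- uniformly on a set of large measure with controlled geometry, since the pointwise asymptotics must be effective enough to feed the equidistribution of transversals. Controlling those error terms, rather than the bookkeeping that identifies the leading distribution, is where the real difficulty lies; the positivity of $f$ (so that $\phi^f_\R$ is a bona fide flow) and the restriction to trigonometric polynomials (which turns ``cohomologous to a constant'' into a closed, finite-codimension condition) are what make the dichotomy come out clean.
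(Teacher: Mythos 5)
The paper does not prove this statement; it cites the dichotomy as a result of \cite{AFRU21} and treats the ``in particular'' clause as an immediate corollary, which is exactly how you have organized your argument. Your derivation of the corollary is the natural (and essentially only) one: normalize $\bar f=1$, use the measurable version of Lemma~\ref{lemma:time_changes1} in one direction, and in the other direction use that a uniquely ergodic nilflow has a nontrivial toral Kronecker factor, hence is never weakly mixing, so conjugacy of $\phi^f_\R$ to $\phi_\R$ rules out mixing and the dichotomy forces $f$ to be measurably cohomologous to a constant. One small point worth making explicit: to see that the constant must equal the mean when the transfer function is only measurable (not a priori integrable), one should invoke Luzin plus unique ergodicity of $\phi_\R$, so that boundedness of $\int_0^t(f-c)\circ\phi_s\,ds$ on a large-measure set forces $c=\bar f$; your phrasing glosses over this.

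Your sketch of the dichotomy itself is a fair summary of the \cite{AFRU21} shearing strategy (transverse stretch controlled by higher Birkhoff integrals, deviation asymptotics governed by the invariant distributions of \cite{FlaFo07}, then Marcus--Ratner-type equidistribution of sheared arcs), and you correctly identify that the restriction to the dense subspace $\mathcal P$ of trigonometric polynomials is what makes ``not cohomologous to a constant'' detectable by finitely many invariant distributions in each relevant component. Since the paper under review simply cites this theorem without reproducing the proof, there is nothing in it to compare your sketch against beyond attribution; your account is consistent with the cited source and with the brief remarks the authors make about it.
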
 
 
 For general smooth time changes and only in the Heisenberg case we can prove the following weak mixing result:
 
  \begin{theorem} 
   For almost every Heisenberg  nilflow $\phi_\R$ on a Heisenberg nilmanifold $M$ and for any positive function $f\in C^\infty(M)$  either 
   the time change $\phi^f_\R$ is weakly mixing or   $f$ is smoothly cohomologous to constant.  In particular, 
 time change $\phi^f_\R$  is measurably conjugated to the nilflow $\phi_\R$  if and only if $f$ is smoothly cohomologous to constant.
  \end{theorem}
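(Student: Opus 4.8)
\medskip
\noindent\textbf{Proof strategy.}
The ``in particular'' clause reduces immediately to the dichotomy. If $f$ is smoothly cohomologous to a constant, that constant must be $\bar f:=\int_M f\,d\mu$ (with $\mu$ the normalized Haar measure, invariant under $\phi_\R$), so $f-\bar f=Xu$ with $u\in C^\infty(M)$ and $X$ the generator of $\phi_\R$; Lemma~\ref{lemma:time_changes1}, applied with $g\equiv\bar f$, then makes $\phi^f_\R$ $C^\infty$-conjugate to the constant reparametrization $\phi^{\bar f}_\R$, which is again a Heisenberg nilflow, so $\phi^f_\R$ is in particular measurably conjugate to $\phi_\R$. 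Conversely, if $\phi^f_\R$ is measurably conjugate to $\phi_\R$, it has the same point spectrum, which is nontrivial because a Heisenberg nilflow factors onto an irrational linear flow on $\T^2$; hence $\phi^f_\R$ is not weakly mixing, and the dichotomy forces $f$ to be smoothly cohomologous to a constant. So it suffices to prove the dichotomy.

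A time change of a uniquely ergodic flow is uniquely ergodic, and for a.e.\ (Diophantine) Heisenberg nilflow the nilflow is uniquely ergodic; hence $\phi^f_\R$ is ergodic, and ``not weakly mixing'' yields a nonconstant measurable $h\colon M\to S^1$ with $h\circ\phi^f_t=e^{2\pi i s t}h$, $s\neq0$, where $|h|=1$ by ergodicity. Unwinding the time change as in the proof of Lemma~\ref{lemma:time_changes1}, this is equivalent to
\[
h\circ\phi_\tau=\exp\!\Big(2\pi i s\!\int_0^\tau f\circ\phi_u\,du\Big)\,h\qquad\text{for all }\tau\in\R\text{ and a.e.\ }x .
\]
Writing $h=e^{2\pi i\varphi}$ with $\varphi\colon M\to\R/\Z$ measurable, $\bar f=\int_M f\,d\mu$, $v=f-\bar f$ and $B_\tau(x)=\int_0^\tau v\circ\phi_u\,du$, this says exactly that the circle extension $\Phi_\tau(x,\theta)=(\phi_\tau x,\,\theta+s\bar f\tau+sB_\tau(x))$ of $\phi_\R$ on $M\times(\R/\Z)$ carries a measurable invariant graph, namely that of $-\varphi$; in other words, the cocycle $s\bar f\tau+sB_\tau$ is a measurable $\R/\Z$-coboundary over $\phi_\R$. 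The remaining task is to deduce from this that $v$ is a smooth coboundary for $\phi_\R$.

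I would split according to whether $\sup_{x\in M,\ \tau\in\R}|B_\tau(x)|$ is finite. If it is, then $v$ is a bounded coboundary, so by a Gottschalk--Hedlund argument, valid since $\phi_\R$ is minimal, one has $v=Xw$ with $w\in C^0(M)$; the Diophantine condition on $\phi_\R$ together with the hypoelliptic, effectiveness-type results for the cohomological equation of Heisenberg nilflows developed in \S\ref{sec:cohom} (the Flaminio--Forni theory, which in this step-$2$ case comes with tame a priori estimates) then upgrade $w$ to $C^\infty$, so $v$ is a smooth coboundary and we are done. If instead $\sup_{x,\tau}|B_\tau(x)|=\infty$, the goal is to contradict the existence of the measurable section $\varphi$, and this is where the structure of the space of invariant distributions enters: for smooth $v$, unboundedness of $B_\tau$ forces some $\phi_\R$-invariant distribution of positive order, living in a nonzero central isotypic component, not to vanish on $v$, and the renormalization of Heisenberg nilflows by the diagonal flow on $ASL(2,\Z)\backslash ASL(2,\R)$ produces scaling times $\tau_j\to\infty$ along which $c_j^{-1}B_{\tau_j}$, with $c_j\to\infty$, converges to a nontrivial self-similar limit; in particular the distributions of $sB_{\tau_j}\bmod 1$ under $\mu$ do not concentrate near $0$. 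This is incompatible with the section relation $\varphi\circ\phi_{\tau_j}-\varphi=s\bar f\tau_j+sB_{\tau_j}\pmod 1$, which ties the a priori fixed, bounded random variable $\varphi\circ\phi_{\tau_j}-\varphi$ to the spreading family $sB_{\tau_j}$; the resulting contradiction shows the unbounded case cannot occur, which completes the dichotomy.

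\smallskip
\noindent\textbf{Main obstacle.} The delicate point is the unbounded case: converting ``a positive-order invariant distribution does not vanish on $v$'' into a genuinely quantitative non-concentration statement for the ergodic integrals $B_{\tau_j}$ that is robust enough to defeat an arbitrary measurable invariant section, not merely a continuous one. This requires a sharp, renormalization-based form of the principle that invariant distributions govern the deviation of ergodic averages for Heisenberg nilflows, together with control of the interaction of the infinitely many nonzero central frequencies present in a general smooth $f$ (selecting a dominant one and bounding the tails via smoothness and the tame estimates). It is precisely for this reason that the statement is restricted to \emph{almost every}, i.e.\ Diophantine, Heisenberg nilflow: the Diophantine condition is used both for the self-similar description of $B_\tau$ in the unbounded case and for the regularity bootstrap $C^0\text{-coboundary}\Rightarrow C^\infty\text{-coboundary}$ in the bounded case. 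The renormalization input and the precise description of the invariant distributions are supplied by \S\ref{sec:cohom} and the Flaminio--Forni theory it builds on, and the overall scheme parallels the one behind Theorems~\ref{thm:main1b} and~\ref{thm:main2b}.
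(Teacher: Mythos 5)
Your reduction of the ``in particular'' clause to the dichotomy is fine, and your treatment of the bounded case (Gottschalk--Hedlund plus the Flaminio--Forni regularity theory under the Diophantine assumption) is consistent with what the paper uses elsewhere. But the unbounded case is where the proof actually lives, and your plan for it has a genuine gap that you yourself flag as the ``main obstacle.'' The difficulty is not merely quantitative: even if one establishes that $sB_{\tau_j}\bmod 1$ fails to concentrate near $0$ (or even equidistributes), this does not by itself contradict the section relation $\varphi\circ\phi_{\tau_j}-\varphi\equiv s\bar f\tau_j+sB_{\tau_j}\pmod 1$ for a merely measurable $\varphi$, because the left-hand side is a priori just a bounded $\R/\Z$-valued random variable whose distribution you have no independent control over. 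To turn ``$B_{\tau_j}$ spreads'' into a contradiction you would need a rigidity-type sequence along which $\phi_{\tau_j}$ converges to the identity (so that $\varphi\circ\phi_{\tau_j}-\varphi\to 0$ in measure); Heisenberg nilflows are uniquely ergodic and aperiodic and have no such returns, so that route is closed.

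The paper avoids this obstacle by exploiting the central direction of the Heisenberg group, which your proposal never uses. Writing the eigenfunction relation as $u\circ\phi_t = u\,\exp\!\bigl(i\lambda\int_0^t f\circ\phi_s\,ds\bigr)$ and comparing it at $x$ and at $\phi^Z_z(x)$ (the central flow commutes with $\phi_\R$), one gets a formula for $u\circ\phi^Z_z\circ\phi_t - u\circ\phi_t$ in which the $L^2$-translation continuity of $u$ gives a bound that is \emph{uniform in $t$} as $z\to 0$, while the right-hand phase difference is, by the intermediate value theorem, essentially $z\int_0^t Zf\circ\phi_s\,ds$. The reduction is therefore to the hypothesis that $Zf$ is \emph{not} a coboundary; then some invariant distribution $D$ has $D(Zf)\neq 0$, the ergodic integrals of $Zf$ grow like $t^{1/2}$ on a set of uniformly positive measure along a renormalization sequence $t_n$, and choosing $z_n\sim t_n^{-1/2}$ makes the phase difference bounded away from $0$ (and from $\pi$) on that set---contradicting the fact that the exponential must tend to $1$ in measure. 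This balance between the scale $z$ in the central direction and the deviation exponent is the essential new idea; without it, the contradiction you are after does not close, so as written the proposal does not constitute a proof of the dichotomy.
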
 
 \begin{proof}
 We assume that $\phi^f_\R$ is not weakly mixing and derive that $f$ is smoothly cohomologous to a constant. Let $u\in L^2(M)$ denote a square
 integrable eigenfunction for $\phi^f_\R$ of eigenvalue $\imath \lambda \not =0$.  Since $\phi^f_\R$ is by definition generated by the vector field
 $f^{-1} X$, the eigenvalue equation can be written as $Xu = \imath \lambda f u$ in terms of the generator $X$ of the Heisenberg nilflow. 
 After integration,  for all $t\in \R$ we have the identity
 $$
u \circ \phi_t  =    u \exp ( \imath \lambda \int_0^t  f\circ \phi_s ds ) 
 $$
 Let $\phi^Z_\R$ denote the flow of the central vector field (which commutes with $\phi_\R$). We also have, for all $z \in \R$, 
 $$
 u \circ \phi^Z_z \circ  \phi_t  -  u \circ \phi_t   =    u \left[ \exp ( \imath \lambda \int_0^t  f \circ \phi^Z _z \circ \phi_s ds )   -  \exp ( \imath \lambda \int_0^t  f  \circ \phi_s ds )    \right]
 $$
By continuity in mean, since $u\in L^2(M)$ we have, uniformly with respect to $t\in \R$, 
 $$
 \lim_{z \to 0}  \Vert u \circ \phi^Z_z \circ  \phi_t  -  u \circ \phi_t  \Vert_{L^2(M)}   = \lim_{z \to 0}  \Vert u \circ \phi^Z_z   -  u   \Vert_{L^2(M)}   =0\,,
 $$
hence the following equation holds in measure:
\begin{equation}
\label{eq:meas_lim} 
 \lim_{z \to 0} \lim_{t\to +\infty}  \exp \left [ \imath \lambda (\int_0^t  f \circ \phi^Z _z\circ \phi_s ds   -  \int_0^t  f  \circ \phi_s ds) \right] \,= \, 1.
\end{equation}
By the intermediate value theorem we have, for all $t>0$, $x\in M$ and $z \in [-1, 1]$, 
\begin{equation}
\label{eq:int_value}
\begin{aligned}
z &\left\vert  \int_0^t  Zf  \circ \phi_s(x) ds \right\vert  -\frac{z^2}{2}  \Vert \int_0^t  Z^2 f \circ\phi_s ds \Vert_{C^0(M)}  \\   &  \qquad \leq  \left\vert \int_0^t  f \circ \phi^Z _z \circ \phi_s(x) ds -  \int_0^t  f \circ \phi_s (x) ds \right\vert  \leq    z  \Vert \int_0^t  Z f \circ\phi_s ds \Vert_{C^0(M)}  \,.
\end{aligned}
\end{equation}
  By the results on the cohomological equation of Heisenberg nilflows, under a Diophantine condition on the nilflow,  the function $f$ is cohomologous to a 
 constant if and only if the function $Zf$ is a coboundary, hence we can assume that $Zf$ is not a coboundary.  Under this assumption, there exists an
 $X$-invariant distribution $D \in W^{-r} (M)$ (for all $r>1/2$) such that $D(Zf) \not =0$, and there exists a constant $C>0$ such that,  on the one  hand, for all $t>1$,
 $$
\Vert   \int_0^t  Z  f  \circ \phi_s ds \Vert_{L^2(M)}  \geq  C \vert D(Zf) \vert  \,  t^{1/2} \,.
 $$
 on the other hand, there exists a diverging sequence $\{t_n\}$ (which depends only on the returns of the orbit of the nilflow under renormalization to a compact set
 in the moduli space) and, for $r > 5/2$, there exists a constant $C_{r}>0$ such that 
 \begin{equation}
 \label{eq:unif_bounds} 
\max_{k\in \{1,2\}}  \Vert   \int_0^{t_n}   Z^k  f  \circ \phi_s ds  \Vert_{C^0(M)}  \leq  C_r \Vert f \Vert_{W^r(M)}    \,  t_n^{1/2}  \,.
 \end{equation}
 It follows that, there exists a constant $C'>0$ and, for each $n\in \N$, there exists a set $E_n\subset M$ such that  $\text{vol} (E_n) \geq  C' \vert D(Zf) \vert^2 / \Vert f \Vert^2_{W^r(M)} $ and, in addition,
 $$
  \vert  \int_0^{t_n}  (Z  f  \circ \phi_s)(x)  ds \vert   \geq  C' \vert D(Zf) \vert  \, t_n^{1/2} \,,  \quad \text{ for all } x \in E_n\,.
 $$
 Finally,  by the above intermediate value formula \eqref{eq:int_value} and by the upper bounds \eqref{eq:unif_bounds} (in the uniform norm)  on ergodic integrals, it follows that 
 there exist constants  $c_f$, $c'_f >0$ such that, for the for the sequence $z_n= c_f  \lambda^{-1}  t_n^{-1/2}$ we have, for all $n\in \N$, 
 $$
  \frac{c'_f}{\lambda}   \leq    \left\vert \int_0^{t_n}  (f \circ \phi^Z _{z_n} \circ \phi_s)(x) ds -  \int_0^{t_n}   (f \circ \phi_s)(x) ds \right \vert  \leq  \frac{1}{2\lambda} \,, \quad \text{ for all } x \in E_n\,,
 $$
 in contradiction with the limit in measure of formula \eqref{eq:meas_lim}. The argument is therefore complete.
  \end{proof}

 \subsection{Skew product flows}  
 
 We prove basic results about conjugacy of skew product flows and cohomology which we could not find in the literature.

 Let $\phi_\R$ denote a smooth flow on a compact manifold $M$ and let $f \in C^0(M)$ be a  strictly positive function. 
 
 We define the skew-product flow $\Phi^f_\R$ on $M\times \T$  as follows
 $$
 \Phi^f_t (x, \theta)  =  \left(\phi_t (x) ,  \theta + \int_0^t  (f\circ \phi_s)(x) ds \right)\,,   \quad \text {for all }  (x, \theta, t) \in M\times \T \times \R\,.
 $$

 \begin{lemma} 
  \label{lemma:conj_skew_shifts1} Let $\phi_\R$ be any flow on $M$ and let $f, g \in C^k (M)$ be positive functions.  
If the cohomological equation
 $$
 u \circ \phi_t - u =  \int_0^t  (f-g) \circ \phi_s ds \,, \quad \text{ for all } t \in \R\,,
 $$
  has a solution $u \in C^l(M)$,  then the  flows $\Phi^f_\R$ and  $ \Phi^g_\R$ are $C^l$ conjugate. 
 \end{lemma}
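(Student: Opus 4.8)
The plan is to exhibit the conjugacy explicitly in terms of the solution $u$ of the cohomological equation, in the same spirit as the proof of Lemma~\ref{lemma:time_changes1} for time changes. Define $\mathcal H\colon M\times\T\to M\times\T$ by
$$
\mathcal H(x,\theta):=\bigl(x,\,\theta-u(x)\bigr)\,,
$$
where the fibre coordinate is taken modulo $1$; this is well defined precisely because $u$ is real-valued, so the translation $\theta\mapsto\theta-u(x)$ descends to $\T=\R/\Z$. Since $u\in C^l(M)$, the map $\mathcal H$ is of class $C^l$ on $M\times\T$, and it is invertible with inverse $(x,\theta)\mapsto\bigl(x,\,\theta+u(x)\bigr)$, again of class $C^l$; hence $\mathcal H$ is a $C^l$ diffeomorphism of $M\times\T$.

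Next I would verify the conjugacy identity $\mathcal H\circ\Phi^f_t=\Phi^g_t\circ\mathcal H$ for every $t\in\R$ by direct substitution into the definition of the skew-product flows. On the one hand,
$$
\mathcal H\bigl(\Phi^f_t(x,\theta)\bigr)=\Bigl(\phi_t(x),\,\theta+\int_0^t(f\circ\phi_s)(x)\,ds-u(\phi_t(x))\Bigr)\,,
$$
and on the other hand
$$
\Phi^g_t\bigl(\mathcal H(x,\theta)\bigr)=\Bigl(\phi_t(x),\,\theta-u(x)+\int_0^t(g\circ\phi_s)(x)\,ds\Bigr)\,.
$$
The base components coincide, and the fibre components coincide (modulo $1$) exactly when
$$
u(\phi_t(x))-u(x)=\int_0^t(f-g)\circ\phi_s(x)\,ds\,,
$$
which is precisely the hypothesis, valid as an identity in $\R$ and hence a fortiori modulo $1$. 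This shows that $\mathcal H$ conjugates $\Phi^f_\R$ to $\Phi^g_\R$.

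There is no genuine difficulty in this argument; the only points deserving attention are bookkeeping ones. First, one must check that translating the fibre by the real-valued function $u$ indeed produces a well-defined $C^l$ self-map of $M\times\T$, so that all computations are carried out consistently modulo $1$ in the circle direction. Second, instead of checking the identity for all time-$t$ maps one may equivalently differentiate at $t=0$ and verify that $d\mathcal H$ intertwines the generating vector fields $X+f\,\partial_\theta$ and $X+g\,\partial_\theta$ of $\Phi^f_\R$ and $\Phi^g_\R$ (with $X$ the generator of $\phi_\R$), which amounts to the infinitesimal relation $Xu=f-g$ obtained by differentiating the cohomological equation at $t=0$. Finally, as in Lemma~\ref{lemma:time_changes1}, the regularity of $\mathcal H$ matches exactly that of the solution $u$, so a measurable, $C^0$, $C^k$ or $C^\infty$ solution yields a conjugacy of the corresponding class.
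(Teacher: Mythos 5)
Your proposal is correct and is essentially the paper's own proof: the paper defines $h(x,\theta)=(x,\theta+u(x))$ and verifies $\Phi^f_t\circ h = h\circ\Phi^g_t$, while you work with the inverse map $\mathcal H = h^{-1}$ and verify the equivalent relation $\mathcal H\circ\Phi^f_t=\Phi^g_t\circ\mathcal H$; the computation is the same. Your added remarks on well-definedness modulo $1$ and on the infinitesimal reformulation $Xu=f-g$ are accurate but not a different method.
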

 \begin{proof}
  Let us assume that the cohomological equation as a solution $u\in C^l(M)$. We prove that the flows $\Phi^f_\R$ and  $ \Phi^g_\R$ 
are conjugate by a conjugacy of the regularity of the solution $u$ of the cohomological equation. Let us define the homeomorphism
$h : M\times \T \to  M\times \T$ by the formula
$$
h(x, \theta) =  (x, \theta + u(x) ) \,, \quad \text{ for all } (x,\theta) \in M\times \T\,.
$$
We have, for all $(x, \theta) \in M\times \T$, 
$$
\begin{aligned} 
(\Phi^f_t \circ h) (x, \theta) &= \left( \phi_t(x), \theta + u(x) +  \int_0^t  (f\circ \phi_s)(x) ds ) \right)  \\ &= \left( \phi_t(x), \theta + u \circ \phi_t(x) + 
 \int_0^t  (g\circ \phi_s)(x) ds \right)  = (h \circ \Phi^g_t)(x, \theta)\,,
\end{aligned} 
$$
which proves the statement. 
 \end{proof}
 
 Let $X$ denote the generator of the smooth flow $\phi_\R$ on $M$ and let $\mathcal Z_X$ denote the centralizer of $X$ in the Lie algebra
 $\mathcal V (M)$ of smooth vector field on $M$:
 $$
\mathcal Z_X:=\{  Y \in \mathcal V (M) \vert  [X,Y] =0\}\,.
 $$
 
 \begin{definition}
 \label{def:expansive}  The smooth flow $\phi_\R$  with generator $X$ on $M$ is called expansive if the following holds: there exists 
 $\delta >0$ such that for all $x$, $x' \in M$ such that $x' \not \in  \exp (\mathcal Z_X) x$  (that is, $x'$ does not belong to the orbit of 
 $x$ under the centralizer $\mathcal Z_X$) there exists  $t \in \R\setminus \{0\}$  such that (with respect to a given smooth distance function on $M$) we have
 $$
 \text{dist}_M (\phi_t(x), \phi_t(x') )  \geq  \delta\,.
 $$
 \end{definition}

 \begin{lemma} 
 \label{lemma:conj_skew_shifts2}
  Let the smooth flow $\phi_\R$ be expansive, and let $f, g \in C^k (M)$ be positive functions.  If the flows $\Phi^f_\R$ and  $ \Phi^g_\R$ are $C^l$ conjugate  (with $l\leq k$) and
minimal (or quasi-minimal in the case of Kochergin flows), then there exists a  $C^l$ automorphism $\bar h: M\to M$ of the flow $\phi_\R$ (in the sense that $\bar h \circ \phi_t= \phi_t \circ \bar h$ for all $t\in \R$) 
such that the  cohomological equation
 $$
 u \circ \phi_t - u =  \int_0^t  (f \circ \bar h -g) \circ \phi_s ds \,, \quad \text{ for all } t \in \R\,, 
 $$
  has a solution $u \in C^l(M)$.
 \end{lemma}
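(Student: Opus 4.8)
The plan is to show that a $C^l$ conjugacy of skew products must respect the circle fibration $\pi\colon M\times\T\to M$, up to a $C^l$ automorphism of the base flow, and then to extract the cohomological equation from its fibre component. Write a $C^l$ conjugacy as $H=(A,B)$ with $A=\pi\circ H\colon M\times\T\to M$ and $B\colon M\times\T\to\T$, and set $c^\psi_t(x):=\int_0^t(\psi\circ\phi_s)(x)\,ds$ for a positive function $\psi$. Splitting $H\circ\Phi^g_t=\Phi^f_t\circ H$ into its $M$- and $\T$-components gives
\[
A\bigl(\phi_t x,\theta+c^g_t(x)\bigr)=\phi_t\bigl(A(x,\theta)\bigr),\qquad B\bigl(\phi_t x,\theta+c^g_t(x)\bigr)=B(x,\theta)+c^f_t\bigl(A(x,\theta)\bigr).
\]
Since $H$ is a conjugacy of flows and not merely an orbit equivalence, the first identity says precisely that $A$ is a factor map from $\Phi^g_\R$ onto $\phi_\R$ with the same time parametrisation; this structural fact is what we exploit.

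First I would use expansiveness to localise $A$ on the fibres. As $H$ is a homeomorphism of the compact manifold $M\times\T$ it is uniformly continuous; let $\delta>0$ be the constant of Definition~\ref{def:expansive}, and pick $\eta>0$ with $|\theta-\theta'|<\eta\Rightarrow\text{dist}_M(A(p,\theta),A(p,\theta'))<\delta$ for all $p$. From the first displayed identity, $\phi_t(A(x,\theta))$ and $\phi_t(A(x,\theta'))$ are the values of $A$ at two points with the same $M$-coordinate and $\T$-coordinates differing by the constant $\theta-\theta'$; so if $|\theta-\theta'|<\eta$ then $\text{dist}_M(\phi_t A(x,\theta),\phi_t A(x,\theta'))<\delta$ for all $t\in\R$, whence expansiveness forces $A(x,\theta')\in\exp(\mathcal Z_X)A(x,\theta)$. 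Chaining over a fine partition of the circle, $A(x,\theta)$ and $A(x,\theta')$ lie on the same $\exp(\mathcal Z_X)$-orbit for all $\theta,\theta'$, i.e. $A$ sends each fibre $\{x\}\times\T$ into a single $\mathcal Z_X$-orbit. The decisive step is to upgrade this to genuine $\theta$-independence $A(x,\theta)=:\bar h(x)$: one argues that the loop $\theta\mapsto A(x,\theta)$ — contained in one $\exp(\mathcal Z_X)$-orbit, and the $\pi$-image of the Jordan curve $H(\{x\}\times\T)$ whose $\T$-winding is $\pm1$ while its $M$-part is null-homotopic along that orbit — must be constant, because expansiveness prevents the orbit from recurring to itself at the $\delta$-scale over all of $\R$, so the ``slippage'' of $A(x,\cdot)$ along the orbit has to close up and vanish (in the Heisenberg case the central direction of $\mathcal Z_X$ is handled separately, central orbits being periodic circles, so the central slippage is locally constant hence constant on the connected fibre). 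Once $A$ depends only on $x$, the first displayed identity reads $\bar h\circ\phi_t=\phi_t\circ\bar h$, so $\bar h:=\pi\circ H(\cdot,0)$ is a $C^l$ automorphism of $\phi_\R$ (a $C^l$ diffeomorphism because $H$ and $H^{-1}$ then have the same fibre-preserving form).

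With $A(x,\theta)=\bar h(x)$ we have $H(x,\theta)=(\bar h(x),\Psi(x,\theta))$ with $\Psi$ of class $C^l$. The fibre rotations $R_\psi$ commute with $\Phi^g_\R$, hence $H\circ R_\psi\circ H^{-1}$ commutes with $\Phi^f_\R$; applying the same expansiveness/minimality analysis to $\pi\circ H\circ R_\psi\circ H^{-1}$ (which again satisfies the factor-map identity over $\phi_\R$) shows the only continuous circle of such symmetries is the fibre-rotation group itself, up to an automorphism $\zeta\mapsto\epsilon\zeta$ of $\T$ with $\epsilon\in\{\pm1\}$; therefore $\Psi(x,\cdot)$ is affine, $\Psi(x,\theta)=\epsilon\theta+u(x)$ with $u:=\Psi(\cdot,0)\in C^l(M)$. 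Substituting into the $\T$-component of $H\circ\Phi^g_t=\Phi^f_t\circ H$ and using $f(\phi_s\bar h x)=(f\circ\bar h)(\phi_s x)$ yields
\[
u\circ\phi_t-u=\int_0^t\bigl(f\circ\bar h-\epsilon\,g\bigr)\circ\phi_s\,ds\,,\qquad t\in\R.
\]
If $\epsilon=-1$ the right-hand side equals $\int_0^t(f\circ\bar h+g)\circ\phi_s\,ds\ge(\inf_M f+\inf_M g)\,t\to+\infty$, contradicting $|u\circ\phi_t-u|\le 2\|u\|_{C^0(M)}$; hence $\epsilon=+1$, which is exactly the asserted cohomological equation with $C^l$ solution $u$. (As a by-product one recovers the compatibility condition $\int_M f\circ\bar h\,d\mu=\int_M g\,d\mu$, and the statement is seen to be the precise converse of Lemma~\ref{lemma:conj_skew_shifts1}.)

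The hard part will be the middle step: proving that $A$ is genuinely $\theta$-independent and that $\bar h$ is an honest automorphism rather than merely a conjugacy of $\phi_\R$ to one of its time changes. The obstruction is that $\exp(\mathcal Z_X)$-orbits are dense, so ``on a common orbit and $M$-close'' does not by itself imply ``close along the orbit'', and controlling this is precisely what Definition~\ref{def:expansive} is designed for — it excludes the recurrence that would let the slippage of $A(x,\cdot)$ accumulate without closing up. In the concrete settings this step can instead be carried out via the classification of self-joinings of the base: the two factor maps $A,\pi\colon\Phi^g_\R\to\phi_\R$ produce an ergodic self-joining of $\phi_\R$, which for horocycle flows (Ratner's theorem, the skew-product analogue of Theorem~\ref{thm:Ratner_rig}), for Diophantine Heisenberg nilflows, and for generic Kochergin flows is supported on the graph of $\psi_C\circ\phi_{s_0}$ for an algebraic automorphism $\psi_C$ with $C\Gamma C^{-1}=\Gamma$ and a constant $s_0$ (the time constant being forced by $\Phi^g_\R$-invariance and minimality), whence $\bar h=\psi_C\circ\phi_{s_0}$; this route also accommodates the quasi-minimal Kochergin case and explains the appearance of the automorphisms $\psi_C$ exactly as in Ratner's theorem.
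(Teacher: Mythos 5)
Your opening step matches the paper's: uniform continuity of the conjugacy plus expansiveness force the $M$-component $A=h_M$ to send each circle fibre $\{x\}\times\T$ into a single $\exp(\mathcal Z_X)$-orbit. After that the two arguments diverge, and the divergence matters.

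The crucial middle step --- promoting ``$A(x,\cdot)$ lies in one $\mathcal Z_X$-orbit'' to ``$A(x,\cdot)$ is constant'' --- is where your proposal has a genuine gap. The winding-number/``slippage'' heuristic is not a proof: $\exp(\mathcal Z_X)$-orbits are in general dense immersed leaves, so the loop $\theta\mapsto A(x,\theta)$ is a map into a leaf that does not carry a circle topology, and there is no homotopy-theoretic obstruction forcing it to close up, let alone to be constant. ``Expansiveness prevents the orbit from recurring to itself at the $\delta$-scale'' is also false as stated (Definition~\ref{def:expansive} quantifies over pairs of points off a common $\mathcal Z_X$-orbit, not recurrence of a single leaf). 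The paper instead proceeds concretely: it parametrises $h_M(x,\theta)=\exp(\mathbf s(x,\theta)\cdot\mathbf Z)\circ\phi_{r(x,\theta)}(h_M(x,0))$ with continuous coordinate functions $(r,\mathbf s)$, pushes the conjugacy identity through this parametrisation, uses quasi-minimality of $\phi_\R$ to get that $(\mathbf s,t)\mapsto\exp(\mathbf s\cdot\mathbf Z)\phi_t(x_0)$ is an immersion for a.e.\ $x_0$ (which makes the $\theta$-independence of the increments $r\circ\Phi^g_t-r$ and $\mathbf s\circ\Phi^g_t-\mathbf s$ a genuine equation rather than a heuristic), then defines $\bar h$ via the fibre average $(\rho,\sigma)$ of $(r,\mathbf s)$, verifies $\bar h$ is an automorphism, and finally uses minimality of $\Phi^g_\R$ to show $r-\rho$ and $\mathbf s-\sigma$ are constant (hence zero by the averaging). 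Your joinings alternative is a real alternative route for the specific classes (horocycle, Heisenberg, Kochergin) but it imports Ratner-type classification results and so does not prove the lemma as stated for a general expansive $\phi_\R$.

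The final step also differs: you try to show $\Psi(x,\cdot)$ is an affine circle map $\theta\mapsto\epsilon\theta+u(x)$ by a centralizer argument, which again is more than you can justify without first classifying the $C^0$ centralizer of $\Phi^f_\R$ --- and that classification is not available at this level of generality. The paper sidesteps this entirely: it only needs $h_\T(x,\theta)=\theta+U(x,\theta)\bmod\Z$ with $U\in C^l$ (degree one, which follows from injectivity of $h$ once $h_M$ is known to be fibre-constant), and then simply averages the conjugacy identity over $\theta\in\T$ to produce the transfer function $u(x)=\tfrac{1}{2\pi}\int U(x,\theta)\,d\theta$; the integer ambiguity $m$ is killed at $t=0$. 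Your sign argument ($\epsilon=+1$ from positivity of $f,g$) is a nice observation and actually addresses a degree issue the paper leaves implicit, but it is downstream of an unproved affine structure. In short: same starting point, but both the $M$-component and $\T$-component steps in your proposal replace the paper's averaging-plus-minimality argument with heuristics or with stronger classification results; the $\theta$-independence of $h_M$ as you argue it does not go through.
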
 
\begin{proof} Let us assume that there exists a conjugacy $h : M\times \T \to M\times \T$ between $\Phi^f_\R$ and $\Phi^g_\R$, that is, a homeomorphism  
$h = (h_M, h_{\T})  \in C^l(M\times \T)$ such that
$$
(\Phi^f_t \circ h) (x,\theta) = (h \circ \Phi^g_t) (x,\theta)\,, \quad \text{ for all } (x, \theta, t) \in M\times \T \times \R\,,
$$
which implies, for all $(x,\theta, t) \in M\times \T \times \R$, 
$$
\begin{aligned}
(\phi_t \circ h_M)(x, \theta)&=  h_M \left (\phi_t (x), \theta + \int_0^t (g \circ \phi_s) (x) ds  \right) \,; \\
h_\T (x, \theta) + \int_0^t (f\circ \phi_s \circ h_M)(x, \theta) ds &= h_\T \left(\phi_t(x), \theta + \int_0^t (g\circ \phi_s)(x) ds    \right) \,.
\end{aligned}
$$
We claim that since the flow $\phi_\R$ is expansive, for all $\theta \not = \theta' \in \T$ and for all $x\in M$, the points $h_M(x, \theta)$ and
$h_M(x, \theta')$ belong to the same orbit of the centralizer $\mathcal Z_X =\{X, Z_1, \dots, Z_l\} \subset C^1(M, TM)$  of the generator $X$ of the 
flow.

Let $\delta>0$ denote the expansiveness constant of $\phi_\R$. Since the space $M\times \T$ is compact and  the map $h_M: M\times \T \to M$ is continuous, 
it is uniformly continuous. By definition of uniform continuity, with respect to any given distance function $\text{dist}_M $
on $M$ and a translation invariant distance function $ \text{dist}_\T$ on $\T$, there exists $\eta>0$ such that, for all $(y, \theta)$, $(y', \theta') \in M\times \T$ we have
$$
\text{dist}_M (y, y') + \text{dist}_\T (\theta, \theta') <\eta \implies   \text{dist}_M \left (h_M(y, \theta), h_M(y', \theta') \right)  <\delta\,.
$$
By subdividing the arc $[\theta, \theta']$ into finitely many sub-arcs, we can assume that $\text{dist}_\T (\theta, \theta') < \eta$ so that
$$
\begin{aligned}
\text{dist}_M (\phi_t(x), \phi_t(x)) &+ \text{dist}_\T \left(\theta + \int_0^t (g\circ \phi_s)(x) ds , \theta' + \int_0^t (g\circ \phi_s)(x) ds\right)  \\ &= 
\text{dist}_\T (\theta, \theta' ) < \eta
\end{aligned} 
$$
which implies that, for all $t \in \R$, 
$$
 \text{dist}_M \left (h_M\Big(\phi_t(x), \theta + \int_0^t (g\circ \phi_s)(x) ds  \Big), h_M\Big(\phi_t(x), \theta' + \int_0^t (g\circ \phi_s)(x) ds \Big) \right)  <\delta \,.
$$
Finally, by the first of the above conjugation identities we derive that, for all $t\in \R$,
$$
 \text{dist}_M  \left ((\phi_t \circ h_M)(x, \theta),  (\phi_t \circ h_M)(x, \theta') \right)  < \delta
$$
hence, by the expansiveness property of $\phi_\R$, the points  $h_M(x, \theta)$ and $h_M(x, \theta')$ belong to the same orbit of the
centralizer of its generator. 

As a consequence of the claim proved above, there exist continuous functions $r, s_1, \dots s_l: M\times \T \to  \R$ such that 
$$
h_M(x, \theta) =   \exp (  \sum_{i=1}^l  s_i(x, \theta) Z_i) \circ \phi_{ r(x, \theta)}  (h_M(x,0)) \,, \quad \text{ for all }  (x, \theta) \in M\times \R \,.
$$
Let us adopt the notations ${\bf s} = (s_1, \dots, s_l) \in \R^l$ and ${\bf Z} =(Z_1, \dots, Z_l) \in C^1(M, TM)^l$ so that we can write
$$
{\bf s} \cdot {\bf Z} = \sum_{i=1} ^l   s_i Z_i\,.
$$
From the first conjugation identity we derive
$$
(  \exp ( {\bf s} (x, \theta) \cdot {\bf Z} ) \circ \phi_{t+ r(x, \theta)}  \circ h_M)(x, 0)=     \exp ( {\bf s} \circ \Phi^g_t(x, \theta) \cdot {\bf  Z}) \circ \phi_{  r \circ \Phi^g_t (x, \theta) } \circ h_M  (\phi_t (x), 0) \,,
$$
which in turn implies, for all but finitely many $x\in M$, that  the functions
$$
r \circ \Phi^g_t (x, \theta) -  r(x, \theta) \quad \text{and} \quad   {\bf s} \circ \Phi^g_t (x, \theta) -  {\bf s}(x, \theta) 
$$
do not depend on $\theta \in \T$.  In fact, from the above identity we immediately derive
$$
(  \exp ( ({\bf s} - {\bf s} \circ \Phi^g_t) (x, \theta) \cdot {\bf Z} ) \circ \phi_{t+ (r-r \circ \Phi^g_t)(x, \theta) }  \circ h_M)(x, 0)=   h_M  (\phi_t (x), 0) \,,
$$
and by the (quasi)-minimality of the flow $\phi_\R$ the map
$$
({\bf s}, t)   \to   (  \exp ( {\bf s}  \cdot {\bf Z} ) \circ \phi_{t  } ) (x_0)    \in M
$$
is an immersion for all but finitely many point $x_0 \in M$. 

\smallskip
By the definition of $\Phi^g_\R$, it follows that  by defining
 $$
 \rho(x) =  \frac{1}{2\pi}\int_0^{2\pi} r (x, \theta) d\theta \quad\text{and} \quad  \sigma(x) =  \frac{1}{2\pi}\int_0^{2\pi} {\bf s} (x, \theta) d\theta \,, \quad \text{ for all } x\in M\,,
 $$
 we derive the identities
$$
\begin{aligned}
r \circ \Phi^g_t (x, \theta) -  r(x, \theta) = \frac{1}{2\pi}\int_0^{2\pi} [r \circ \Phi^g_t (x, \theta) -  r(x, \theta) ]d\theta =    \rho \circ \phi_t(x) - \rho (x) \,, \\
{\bf s} \circ \Phi^g_t (x, \theta) -  {\bf s} (x, \theta) = \frac{1}{2\pi}\int_0^{2\pi} [{\bf s} \circ \Phi^g_t (x, \theta) -  {\bf s}(x, \theta) ]d\theta =    \sigma \circ \phi_t(x) - \sigma (x) \,.
\end{aligned}
$$
From the first conjugation identity, we have thus derived that
$$
\begin{aligned}
\phi_t  [ \exp ( \sigma(x) \cdot {\bf Z}  ) & \phi_{\rho(x)}  (h_M(x, 0) ) ] \\  &=  \exp ( \sigma(\phi_t(x)) \cdot {\bf Z}  )  \phi_{\rho (\phi_t(x)) } [ h_M  (\phi_t (x), 0)] \,,
\end{aligned} 
$$
that is the map $\bar h : M\to M$ defined as $\bar h(x) =  \exp ( \sigma(x) \cdot {\bf Z}  )  \phi_{\rho(x)} [ h_M(x, 0)]$ is an automorphism of the flow $\phi_\R$, and we can write
$$
h_M(x, 0) =  \exp ( -\sigma(x) \cdot {\bf Z}  )  \phi_{-\rho(x)}   \circ \bar h ( x)  \,, \quad \text{ for all } x \in M\,.
$$
The first conjugation identity finally implies that, for all $(x,t) \in M\times \R$ we have 
$$
\begin{aligned}
\exp \left [({\bf s} (x, \theta) -\sigma(x) ) \cdot {\bf Z} \right ]& \circ  \phi_{r(x, \theta) -\rho(x) } ( \bar h ( x))  \\ =    &\exp \left [ ({\bf s} \circ \Phi^g_t(x, \theta) -\sigma(x) ) \cdot {\bf Z} \right] \circ \phi_{ r \circ \Phi^g_t (x, \theta) - \rho (\phi_t(x))  }( \bar h ( x)) \,,  
\end{aligned}
$$
which by the minimality of $ \Phi^g_\R$ implies that  the functions $r -\rho$ and ${\bf s} -\sigma$  are constant on $M\times \T$. By the definition of
$\rho$ and $\sigma$ as the average of $r$ and ${\bf s}$, respectively, along the fibers of the fibration $M\times \T \to M$, the functions $r-\rho$ and ${\bf s} -\sigma$ have zero average,  hence
$r =\rho$ and ${\bf s}= \sigma$ are functions on $M$ (independent of $\theta \in \T$).  We conclude that, for all $(x, \theta)\in M\times \T$, we have
$$
\begin{aligned}
h_M(x, \theta) &= \exp( {\bf s}(x, \theta) \cdot {\bf Z}  )\circ  \phi_{ r(x, \theta)}  (h_M(x,0))   \\ &=  \exp( ({\bf s}(x, \theta) -\sigma(x)) \cdot {\bf Z}  )\circ  \phi_{ r(x, \theta) -\rho(x)} ( \bar h(x) ) = \bar h (x)  \,.
\end{aligned} 
$$
Since $h: M\times \T \to M\times \T$ is a homeomorphism, there exists  $U: M\times \T \to \R$  of class $C^l$ such that
$$
h_\T (x, \theta) =   \theta +  U (x, \theta)  \,\, (\text{mod. } \Z) \,, \quad \text{ for all }  (x, \theta) \in M\times \T\,.
$$
By the second conjugation identity we have  (mod. $\Z$) 
$$
U (x, \theta) + \int_0^t (f\circ \bar h \circ  \phi_s)(x) ds = \int_0^t (g\circ \phi_s)(x) ds+   U  \left(\phi_t(x), \theta + \int_0^t (g\circ \phi_s)(x) ds    \right) 
$$
Let then  $u\in C^l(M)$ be the real valued function defined as 
$$
u (x) := \frac{1}{2\pi}  \int_0^{2\pi}   U(x, \theta)  d\theta\,, \quad \text{ for all }  x\in M\,.
$$
By integrating the above  conjugation identity and by continuity, we derive that there exists an integer $m\in \Z$ such that, for all  $(x, t) \in M\times \R$,
$$
u (\phi_t(x)) - u(x) = m+  \int_0^t (f\circ \bar h \circ \phi_s)(x) ds - \int_0^t (g\circ \phi_s)(x) ds \,.
$$
however clearly for $t=0$ the above identity yields $m=0$, hence the cohomological equation has the solution $u\in C^l(M)$,
as stated.

\end{proof} 

The above result motivates the following discussion concerning the centralizer, that is, the automorphism group, of parabolic flows. For horocycle flows, the now 
classical work of Ratner \cite{Rat82}, \cite{Rat87} completely describes the measurable essential centralizer, in terms of the normalizer in $PSL(2, \R)$ of the fundamental 
group of the hyperbolic surface. In particular, the essential centralizer is finite.  For general flows with the Ratner property, it was proved in~\cite{KL17} that the essential 
centralizer is discrete at most countable. It is not known whether it can ever be infinite in this case. 

\medskip
We define the  {\it Kochergin flows} on orientable higher genus surfaces, as the class of locally Hamiltonian (area-preserving) smooth flows with (possibly degenerate) 
saddle singularities which are smooth time changes of  a translation flow, with time change function vanishing at the appropriate order at the singular points of the translation
flow.  For uniquely ergodic Kochergin flows on orientable higher genus surfaces we have the following result. 

\begin{lemma} 
\label{lemma:Koch_centr} Any mapping class which preserves a uniquely ergodic measured foliation is periodic. In particular, 
the $C^0$ essential centralizer of a uniquely ergodic Kochergin flow on an orientable higher genus surface has a periodic mapping class,
therefore for the generic Kochergin flow the $C^0$ essential centralizer is trivial. 
\end{lemma}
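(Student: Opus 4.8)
The plan is to split Lemma~\ref{lemma:Koch_centr} into a purely topological fact about mapping classes — where, as in the application, \emph{uniquely ergodic measured foliation} is understood to mean a minimal (arational) foliation carrying a transverse measure that is unique up to scale — and a transfer step to Kochergin flows. For the topological fact, I would let $\psi$ be a mapping class of the (possibly punctured) surface $S$ fixing such a measured foliation $(\mathcal{F},\mu)$, transverse measure included; since $\mathcal{F}$ is arational it fills $S$, so $i(\mathcal{F},c)>0$ for every essential simple closed curve $c$. I would then run the Nielsen--Thurston trichotomy: $\psi$ is periodic, reducible, or pseudo-Anosov, and it remains to rule out the last two.

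For the pseudo-Anosov case I would use that a pseudo-Anosov map acts on the Thurston sphere $\mathcal{PMF}(S)$ of projective measured foliations with north--south dynamics, the only fixed points being the classes of its stable and unstable foliations (themselves uniquely ergodic); since $\psi$ fixes $[\mathcal{F}]$ this forces $\mathcal{F}$ to be one of those two, but then $\psi$ multiplies its transverse measure by the dilatation $\lambda>1$ or by $\lambda^{-1}<1$, contradicting $\psi_*\mu=\mu$. For an infinite-order reducible $\psi$, I would invoke the description of the fixed set in $\mathcal{PMF}(S)$: every fixed projective class has zero geometric intersection number with the canonical reduction multicurve, whereas arationality gives $i(\mathcal{F},c)>0$ for each of its curves $c$. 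Hence $\psi$ is periodic. Equivalently, the stabilizer in $\operatorname{Mod}(S)$ of a uniquely ergodic arational projective measured foliation is virtually cyclic and is infinite only for pseudo-Anosov foliations, and the subgroup fixing the transverse measure is the kernel of the resulting homomorphism to $(\R_{>0},\cdot)$, hence torsion, hence finite.

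For the transfer step, I would observe that an element of the $C^0$ essential centralizer of a uniquely ergodic Kochergin flow $\phi^f_\R$ on $S$ is represented by a homeomorphism that maps orbits to orbits — hence preserves the orbit foliation $\mathcal{F}$, which is arational and uniquely ergodic since a uniquely ergodic Kochergin flow is a time change of a minimal, uniquely ergodic translation flow — and that preserves the invariant measure of $\phi^f_\R$ together with the flow direction, hence the transverse measure of $\mathcal{F}$ obtained by disintegrating that measure along the flow; by the topological fact it therefore induces a periodic mapping class, which is the second assertion. For genericity I would use that there are only finitely many conjugacy classes of nontrivial periodic mapping classes, that such a class is realized by a finite-order diffeomorphism $g$ and can fix a measured foliation only if the foliation descends to the orbifold quotient by $\langle g\rangle$, so the locus of translation structures and directions whose orbit foliation admits such a symmetry has strictly smaller dimension than $\mathcal{MF}(S)$ and is closed, nowhere dense and Lebesgue-null; for a generic Kochergin flow $\mathcal{F}$ is then fixed by no nontrivial periodic mapping class, so every essential-centralizer element induces the trivial mapping class. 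Finally, a homeomorphism commuting with $\phi^f_\R$ and isotopic to the identity preserves each leaf of the arational minimal foliation $\mathcal{F}$ (on $\mathbb{H}^2$ the leaves are quasigeodesics determined by their endpoints, and a lift isotopic to the identity displaces each leaf a bounded amount), so it acts on each orbit by a time shift that depends continuously and $\phi^f_\R$-invariantly on the point, hence constantly by unique ergodicity; thus it equals some $\phi^f_s$ and is trivial in the essential centralizer, which proves the latter is trivial for a generic Kochergin flow.

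The step I expect to be the main obstacle is the last one: identifying the kernel of the homomorphism from the centralizer to the mapping class group with the flow itself relies on the leaf-rigidity of arational minimal foliations under isotopies of the identity together with unique ergodicity, and the dimension count for $g$-invariant foliations must be made uniform over the finitely many conjugacy classes of finite symmetries of $S$ (and, for the singular Kochergin flows, over the finitely many possible permutations of the saddle set); by contrast, the exclusion of the pseudo-Anosov and reducible cases is a routine application of Thurston theory and the uniqueness of the transverse measure.
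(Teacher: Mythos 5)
Your proposal is essentially correct and follows the same architecture as the paper's proof: apply the Nielsen--Thurston classification to rule out pseudo-Anosov and infinite-order reducible classes, get genericity by showing that translation structures admitting a nontrivial finite symmetry form a measure-zero locus in the stratum, and finally identify the kernel of the map from the centralizer to the mapping class group with the flow itself. The sub-arguments differ in flavor rather than in substance: where you rule out the pseudo-Anosov and infinite-order reducible cases via north--south dynamics on Thurston's sphere of projective measured foliations and via vanishing of intersection numbers with the canonical reduction multicurve, the paper uses the absence of dilation for pA and, for the reducible case, a more elementary argument (a fixed reducing curve, rational rotation number of the restriction to it because the singular leaves are preserved, then an iterate is the identity on a closed subset of the curve, hence on a leaf, hence on all of $M$ by quasi-minimality). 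For the identity mapping-class step you use the lifted quasigeodesic argument to show $h$ fixes each leaf and then a continuous flow-invariant time shift that is constant by unique ergodicity; the paper instead passes to a power fixing all singular points and separatrices and invokes quasi-minimality directly. Your version of this step is arguably more explicit, and you correctly flag it as the place where the argument is most delicate. Both treatments of genericity are the same in spirit (a nontrivial finite-order symmetry makes the translation surface a branched cover of a lower-genus quotient, a measure-zero condition), and your remark that the count must be uniform over the finitely many relevant conjugacy classes is a fair point but poses no real obstruction.
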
 
\begin{proof} Let $h:M \to M$  be a homeomorphism which preserves a uniquely ergodic measured foliation,  including
its transverse invariant measure. By Nielsen-Thurston's classification, $h$ must be isotopic to a finite order element of the mapping class group. In fact, it cannot be isotopic to a pseudo-Anosov homeomorphism since it must have no dilation, and it cannot be non-periodic and reducible since the orbit foliation is ergodic. 
Indeed, if $h$ leaves invariant a finite set of closed curves, then
an iterate of $h$ has a fixed loop $\Gamma$ on which the action of $h$ is a homeomorphism of the circle, which preserves a non-atomic invariant measures on the complement of
closed. Since singular leaves are preserved, the restriction of $h$ to the loop $\Gamma$ has rational rotation number.  It follows that an iterate of $h$ is the identity on a closed 
subset of $\Gamma$, hence it is the identity on every corresponding leaf of the orbit foliation, hence on $M$ since the foliation is quasi-minimal.

A periodic and non-reducible mapping class leaves invariant a conformal structure. There is a unique measured foliation $\mathcal F^\perp$, transverse to the orbit foliation $\mathcal F$ of the Kochergin flow such that $(\mathcal F, \mathcal F^\perp)$ is a translation structure with that conformal class.  Such a translation structure is therefore fixed
by the periodic mapping class, and it is therefore a branched cover. The set of such translation structures has measure zero in the moduli space of Abelian differentials.

Finally, let $h$ a homemorphism which preserves the orbit foliation, which mapping class is the identity, that is, $h$ is isotopic to the identity. We can assume, up to considering powers,  that $h$ fixes all singular points and singular leaves,  hence it fixes the foliation. Since it commutes with the flow, by quasi-minimality it is a time $t$-map of the flow itself. 
\end{proof}

It seems harder to describe the essential measurable centralizer of  uniquely ergodic Heisenberg nilflows.  It was proved in \cite{FoKa20b} that non-trivial times changes of Heisenberg nilflows of {\it bounded type} (such that the projected linear flow in $\T^2$ has a rotation vector with slope of bounded type) satisfy the Ratner property, hence by \cite{KL17} they have discrete at most countable essential centralizer.  However, for the continuous centralizer we have 

\begin{theorem}  (\cite{KMS91}, Cor. 4.5)
\label{thm:Heis_centr}
Let $\phi_\R$ be a uniquely ergodic (or Diophantine) Heisenberg nilflow on a Heisenberg manifold $M:= \Gamma\ H_3(\R)$, quotient of the
$3$-dimensional Heisenberg group $H_3(\R)$ by a (co-compact) lattice $\Gamma < H_3(\R)$. For every element $h:M\to M$ of the $C^0$
essential centralizer of $\phi_\R$ there exists an element $g\in H_3(\R)$ in the normalizer of $\Gamma$, that is, such that $g\Gamma g^{-1} =\Gamma$, with 
$$
h (\Gamma x) =  \Gamma g x \,,  \quad \text{ for all }   x \in H_3(\R)\,.
$$
We note that in contrast with the case of the horocycle flow, the normalizer of $\Gamma$ contains a one-parameter subgroup: the center of the Heisenberg group.
\end{theorem}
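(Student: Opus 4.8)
The plan is to follow the scheme of Ratner's rigidity theorem for the horocycle flow (Theorem~\ref{thm:Ratner_rig}): realise the graph of a centraliser element as a homogeneous sub-nilmanifold and then read off its algebraic form, the input playing the role of Ratner's measure rigidity being now the classical theorem (L.~Green; see Auslander--Green--Hahn) that orbit closures of nilflows are sub-nilmanifolds on which the induced flow is uniquely ergodic. Write $M=\Gamma\backslash H_3(\R)$ and $\phi_t(\Gamma x)=\Gamma x\exp(tW)$ for the generator $W$; in the uniquely ergodic case the image of $\phi_\R$ in the torus factor $\T^2=\Gamma[H_3(\R),H_3(\R)]\backslash H_3(\R)$ is a minimal linear flow, so the horizontal component $\bar W$ of $W$ is a non-zero vector of irrational slope. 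Let $h$ be a homeomorphism of $M$ with $h\circ\phi_t=\phi_t\circ h$ for all $t$, and let $G_h=\{(\Gamma x,h(\Gamma x)):x\in H_3(\R)\}\subset M\times M$. Then $G_h$ is closed and invariant under the diagonal nilflow $\phi_t\times\phi_t$ on the nilmanifold $M\times M=(\Gamma\times\Gamma)\backslash\bigl(H_3(\R)\times H_3(\R)\bigr)$, and the restriction of $\phi_t\times\phi_t$ to $G_h$ is conjugate, via $\Gamma x\mapsto(\Gamma x,h(\Gamma x))$, to the uniquely ergodic (hence minimal) flow $\phi_\R$; hence $G_h$ equals the orbit closure of each of its points, and by Green's theorem it is a homogeneous submanifold of $M\times M$.

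Since $G_h$ is a graph over the first factor, the closed connected subgroup producing it as a homogeneous set projects isomorphically onto $H_3(\R)$ in the first coordinate; consequently $G_h$ is described by an automorphism $\psi$ of $H_3(\R)$ with $\psi(\Gamma)=\Gamma$ and a fixed element $g\in H_3(\R)$, namely $h(\Gamma x)=\Gamma\,\psi(x)\,g$. Imposing $h\circ\phi_t=\phi_t\circ h$ on this form gives $\operatorname{Ad}(g^{-1})\,\psi_*W=W$; projecting modulo the centre $\R Z=[\mathfrak h_3,\mathfrak h_3]$, the linear map induced by $\psi_*$ on $\mathfrak h_3/\R Z\cong\R^2$ — which lies in $GL(2,\Z)$ because $\psi(\Gamma)=\Gamma$ — must fix the irrational-slope direction $\bar W$, whence it is the identity. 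Therefore $\psi_*$ is a unipotent shear $X\mapsto X+pZ$, $Y\mapsto Y+qZ$ with $p,q$ in the lattice preserved by $\psi$, and such a shear is the differential of an inner automorphism by some $\gamma_0\in\Gamma$; thus $h(\Gamma x)=\Gamma\gamma_0 x\gamma_0^{-1}g=\Gamma x\,\gamma_0^{-1}g$, i.e.\ $h$ is right translation by $\gamma_0^{-1}g$, and $\operatorname{Ad}(g^{-1})\psi_*W=W$ now reads $\gamma_0^{-1}g\in\exp(\R W\oplus\R Z)$, so that $h=L_{\exp(uZ)}\circ\phi_s$ for some $s,u\in\R$. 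Since central elements act identically on the left and on the right, this gives $h(\Gamma x)=\Gamma g x$ with $g=\exp(uZ)\in Z(H_3(\R))\subset N(\Gamma)$, up to composition with the flow --- which is the assertion, the essential centraliser being understood modulo the time-$t$ maps of $\phi_\R$ exactly as for the horocycle flow; in particular $N(\Gamma)$ contains the full central one-parameter subgroup, in contrast with the horocycle case.

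I expect the main obstacle to be the appeal, in the first step, to the rigidity of orbit closures of nilflows: it is the only genuinely non-formal ingredient, the exact counterpart here of the measure rigidity used for horocycle flows, and everything afterwards is an explicit computation in $H_3(\R)$ whose only delicate points are the two uses of the irrational slope of $\bar W$ — to trivialise $\psi$ modulo the centre and to confine $g$ to the centre modulo the flow. An alternative, avoiding $M\times M$: the maximal equicontinuous factor of $\phi_\R$ is the torus factor $(\T^2,\bar\phi)$, its fibres being precisely the regionally proximal classes, so by functoriality $h$ descends to a homeomorphism of $\T^2$ commuting with the minimal linear flow, hence to a translation $R_{\xi_0}$; writing $h$ in coordinates on the non-trivial central circle bundle $M\to\T^2$ — the fibre maps having degree $+1$ by an Euler-class argument — commutation with $\phi_\R$ collapses into a twisted cohomological equation $u\circ\phi_t-u=t\,\lambda(\xi_0)$, and minimality together with the fact that the point spectrum of the nilflow coincides with that of its torus factor (with continuous eigenfunctions pulled back from $\T^2$) forces $\xi_0$ and $u$ into the discrete family corresponding to $\{L_g:g\in N(\Gamma)\}$ modulo the flow.
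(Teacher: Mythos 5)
The paper does not actually prove Theorem~\ref{thm:Heis_centr}: it is imported verbatim from Keynes--Markley--Sears \cite{KMS91}, Cor.~4.5, where it is obtained via the structure theory of automorphisms of real suspension flows (a Heisenberg nilflow being a suspension of a skew--shift). Your proof is therefore a genuinely different route, and a rather illuminating one: you transport Ratner's graph argument for horocycle rigidity to the nilflow setting, replacing measure rigidity by the Auslander--Green--Hahn (or Ratner) theorem that orbit closures of nilflows on $M\times M$ are homogeneous sub-nilmanifolds, and you then run an explicit $\mathfrak h_3$ computation. The skeleton is sound: minimality of $\phi_\R$ forces $G_h$ to equal an orbit closure, graph-ness forces the corresponding connected subgroup to project isomorphically to the first $H_3(\R)$ factor, so $h$ is affine; commutation yields $\operatorname{Ad}(g^{-1})\psi_*W=W$; and the $GL(2,\Z)$-plus-irrational-slope argument correctly pins $\bar\psi_*$ to the identity (eigenvalue $1$ of an integer matrix has rational eigenspace, which cannot contain the irrational direction $\bar W$ unless the eigenspace is all of $\R^2$).

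One step is stated too strongly and slightly mislocates where the content of the theorem sits. You write that the unipotent shear $\psi_*$ is ``the differential of an inner automorphism by some $\gamma_0\in\Gamma$'' and then absorb $\gamma_0$ into $\Gamma$ on the left; in general the shear parameters $(p,q)$ only lie in the lattice that $\log\Gamma$ projects to \emph{up to the index-two (or finite-index) ambiguity coming from the commutator normalisation}, so $\gamma_0$ need only lie in $N(\Gamma)$, not in $\Gamma$ itself. If one redoes your computation allowing $\gamma_0\in N(\Gamma)$, one gets $h=L_{\gamma_0\exp(uZ)}\circ\phi_s$ with $\gamma_0\exp(uZ)\in N(\Gamma)$; your version, which insists on $\gamma_0\in\Gamma$ and thus lands on a purely central $g$, silently discards the finite part $N(\Gamma)/(\Gamma\cdot Z(H_3))$ of the normaliser. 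The corrected conclusion is exactly the statement of the theorem (which asserts only $g\in N(\Gamma)$, not $g$ central), so the argument is salvaged by being more careful at that single point. Your closing remark that the essential centraliser is understood modulo the time-$t$ maps of the flow is also necessary and correct: the time-$t$ maps are right translations, not left translations by normaliser elements, so the statement would be false if read without that convention. The sketched alternative via the maximal equicontinuous factor is a reasonable second route, but as written it conflates commutation on the base with the untwisted cohomological equation on the circle fibre and would need to be developed further to be convincing.
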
 

In analogy with Ratner's result for horocycle flows we propose the following:
\begin{conjecture} 
\label{conj:Heis_centr}
Let $\phi_\R$ be a uniquely ergodic (or Diophantine) Heisenberg nilflow on a Heisenberg manifold $M:= \Gamma\slash H_3(\R)$, quotient of the
$3$-dimensional Heisenberg group $H_3(\R)$ by a (co-compact) lattice $\Gamma < H_3(\R)$.  The essential measurable centralizer coincides
with essential $C^0$ centralizer.
\end{conjecture}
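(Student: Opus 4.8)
The plan is to use the fibration of the Heisenberg nilmanifold over its maximal equicontinuous factor, reducing the measurable statement to the $C^0$ statement of Theorem~\ref{thm:Heis_centr}. Identify $M=\Gamma\backslash H_3(\R)$ with the principal $\T$-bundle $\pi\colon M\to\T^2$ obtained by quotienting by the centre $Z\cong\R$ of $H_3(\R)$: this bundle has non-zero Euler class, the structure-group (right $Z$-) action along the fibres commutes with $\phi_\R$, and $\phi_\R$ covers the minimal (Diophantine) linear flow $\bar\phi_\R$ on $\T^2$, which is the Kronecker factor of $\phi_\R$. Given $h$ in the essential measurable centralizer, the Kronecker factor being characteristic it descends to a measurable $\bar h\colon\T^2\to\T^2$ commuting with $\bar\phi_\R$; since the measurable centralizer of a minimal group rotation consists only of translations, $\bar h=R_b$ for some $b\in\T^2$. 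Fixing a measurable trivialisation of $\pi$, write $h(x,\theta)=(x+b,s_x(\theta))$ with $s_x$ a measure-preserving transformation of $\T$ depending measurably on $x$.

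The crucial step is to show that each $s_x$ is a rotation. Because $\phi_\R$ commutes with the $Z$-action, it carries fibres to fibres by rotations in the chosen trivialisation; the relation $h\circ\phi_t=\phi_t\circ h$ then reads $s_{\bar\phi_t x}=R_{c_t(x+b)}\circ s_x\circ R_{-c_t(x)}$, where $c_\R$ is the holonomy cocycle of $\pi$ along $\bar\phi_\R$, whose cohomology class is the non-zero Euler class. Hence the class of $s_x$ in the measure-preserving transformations of $\T$ modulo composition with rotations on either side is $\bar\phi_\R$-invariant and measurable, so a.e.\ equal to one class $[\tau_0]$ by ergodicity. An orientation argument --- the fibres carry a canonical orientation from $Z\cong\R$, and the $Z$-action lies in the centralizer of $\phi_\R$ and cannot be conjugated to its inverse by a measure-preserving map commuting with $\phi_\R$ --- excludes the orientation-reversing possibility for $\tau_0$, so unless $\tau_0$ is a rotation its stabilizer under the two-sided rotation action is finite; in that case a Borel section of the action, substituted back, would measurably trivialise a non-zero multiple of $c_\R$, which is impossible since $c_\R$ and its multiples are the defining cocycles of uniquely ergodic Heisenberg nilflows, that is, of ergodic skew products, whereas $\bar\phi_\R\times\mathrm{id}_\T$ is not ergodic. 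Therefore $\tau_0$, and hence every $s_x$, is a rotation, and $h=R_g\circ G_u$ is the composition of a fixed right translation $R_g$ with $g$ projecting to $b$ and a measurable gauge transformation $G_u$, $u\colon\T^2\to\T$.

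Finally, imposing $h\circ\phi_t=\phi_t\circ h$ on the form $h=R_g\circ G_u$, the defect of $R_g$ in commuting with $\phi_\R$ is a central holonomy term linear in $t$, so $u$ solves the cohomological equation $u\circ\bar\phi_t-u=\kappa t$ over $\bar\phi_\R$ with $\kappa$ depending linearly on $b$. Minimality of $\bar\phi_\R$ forces $\kappa$ into the countable group of eigenvalues of $\bar\phi_\R$, whence $u$ is a character and $b$ is confined to a countable algebraic set; in particular $u$ is smooth, so $h$ coincides a.e.\ with a smooth diffeomorphism of $M$ commuting with $\phi_\R$, and, being smooth, this diffeomorphism belongs to the $C^0$ essential centralizer by Theorem~\ref{thm:Heis_centr}. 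Hence so does $h$. I expect the real obstacle to be the fibre-rotation step of the second paragraph: it requires making the measurable structure theory of compact abelian group extensions work uniformly --- plausibly by a Lusin-type regularisation combined with quantitative recurrence of the nilflow, where the Diophantine hypothesis is convenient --- and excluding the orientation-reversing alternative; these technical points are why we presently state the result only as a conjecture.
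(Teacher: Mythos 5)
The statement you are addressing is stated in the paper only as a \emph{conjecture}, with no proof supplied, so there is no paper argument to compare against; the review below rests on the internal coherence of your sketch. Your overall plan --- descend $h$ to the Kronecker factor $\T^2$ using that the base is characteristic, show the fibre maps $s_x$ are rotations using the non-trivial central holonomy, and then reduce what remains to a linear cohomological equation over the base rotation --- is, in my view, the natural one and appears to be essentially correct. Two points in its favour that you do not spell out: (i) once $\tau_0$ is a rotation, the decomposition $h=R_g\circ G_u$ with $R_g$ a genuine right translation of $M$ (always smooth) and $G_u$ a gauge map is what rescues smoothness of $h$ from smoothness of $u$; the passage via a merely measurable trivialisation of the bundle would not by itself do so, since such a trivialisation destroys the smooth structure. (ii) The constraint $\kappa=\alpha g_1-g_2\in\Z+\alpha\Z$ that you extract for the measurable centralizer is exactly the constraint one computes directly for the smooth centralizer, which is what makes the two coincide.

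The gaps you flag at the end are genuine, and one of them is more serious than you acknowledge: your orientation argument is circular. You want to rule out the flip class for $\tau_0$ on the grounds that $h$ ``cannot conjugate the $Z$-action to its inverse,'' but that is exactly what one would conclude only \emph{after} knowing $h$ is a.e.\ smooth, and you give no independent reason. The good news is that the same cohomological device you use in the finite-stabilizer case also disposes of the flip: if $s_x(\theta)=A(x)-\theta$, the commutation relation gives $A\circ\bar\phi_t-A=c_t(\cdot)+c_t(\cdot+b)$, which is cohomologous to $2c_\R$ plus a constant cocycle $\kappa t$; but the skew product over $\bar\phi_\R$ by $2c_\R+\kappa t$ is ergodic for every $\kappa$ (it is the image of the ergodic Heisenberg nilflow with central component shifted by $\kappa/2$ under the degree-two fibre map $(x,\theta)\mapsto(x,2\theta)$), hence not a coboundary. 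The finite-stabilizer case closes similarly: projecting to the second $\T$-factor modulo the finite stabilizer $S$ and multiplying by $n=\lvert\pi_2(S)\rvert$ shows $nc_\R$ would be a measurable coboundary, impossible since the $n$-fold fibre quotient of the nilflow is ergodic. With these repairs the proof should go through; what remains is the measurability bookkeeping for the disintegration $x\mapsto s_x$ (the two-sided rotation orbit space is standard Borel because the acting group $\T^2$ is compact, so the a.e.\ constancy does follow from ergodicity), and the Fubini step converting ``a.e.\ $(x,t)$'' to ``a.e.\ $x$ for one ergodic time-$t_0$ map.''
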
 

 \section{Non-smooth continuous solutions of cohomological equations}
 \label{sec:cohom}
 
 In this section we recall known results about the existence of continuous, non-smooth solutions of cohomological equations for horocycle flow
 and locally Hamiltonian flows on surfaces. In general, for a smooth flow $\phi_\R$ with generator $X$ on a manifold $M$, for any measurable function $f$ on $\R$ 
 the (scalar) cohomological equation
 $$
 u\circ \phi_t -u =  \int_0^t  f\circ \phi_s ds \,,   \quad \text{ for all } t\in \R\,,
 $$
is equivalent to the linear partial differential equation (taken in weak sense)
 $$
 Xu=f \,.
 $$
 Following A. Katok (see \cite{Kat03}, section 3), the flow $\phi_\R$ is called {\it stable} on the function space $W$ if the range of the Lie derivative operator
 $\mathcal L_X$ in $W$ is a closed subspace of $W$, and it is called $W'$-stable on $W$ if every element $f\in W$ which belongs to the range 
 of $\mathcal L_X$ is the image of $u \in W'$.  Whenever $W'$ and $W$ are spaces of smooth functions, and $C^\infty(M) \subset W' \subset W$,
 by the Hahn-Banach theorem, under the hypothesis that $\phi_\R$ is $W'$-stable on $W$, the above cohomological equation  has a solution
 $u \in W'$ for all $f\in W$ in the kernel of all invariant distributions for the flow $\phi_\R$ which belong to the dual space $W^\ast$.
 The  {\it invariant distributions} for the flow $\phi_\R$ are defined as the distributions $D \in \mathcal D' (M)$ (in the sense of L. Schwartz) such that
 $$
 X D =0 \quad  \Longleftrightarrow \quad   D(X u)=0 \,, \quad \text{for all } u \in C^\infty(M)\,.
 $$
 Almost all Kochergin flows \cite{Fo97}, \cite{MMY05}, \cite{Fo07},  classical horocycle flows \cite{FlaFo03} and Heisenberg nilflows \cite{FlaFo06}  are stable 
 on Sobolev spaces of sufficiently high order.

 \subsection{Horocycle flows}  
 
Let $\phi^U_\R$ be the horocycle flow on the unit tangent bundle $M=T_1(S)$ of a compact hyperbolic surface. It is well-known that the horocycle
flow is a unipotent flow on a homogeneous space $\Gamma \backslash PSL(2, \R)$ for a (co-compact) lattice $\Gamma < PSL(2, \R)$.
Let $U$ denote the generator of the horocycle flow.  For all $k\in \N$ let $W^k(M)$ be the $L^2$ Sobolev space of all functions with  square-integrable
derivatives up to order $k$ on the compact manifold $M$.

The rigidity theorem by Ratner was complemented in \cite{FlaFo19} with the following regularity lemma:
\begin{lemma} 
\label{lemma:cocycle_rigidity}
  If a function $f \in W^r(M)$,  with $r>2$, is a
  coboundary for the horocycle flow $\phi_\R$ with measurable primitive $u: M \to \R$ 
then the  primitive $u$ is continuous and belongs the Sobolev space $\bigcap_{t<1} W^t (M) $.
\end{lemma}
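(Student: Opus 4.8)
The plan is to read the hypothesis as the assertion that the measurable function $u$ is a weak (distributional) solution of the transport equation $Uu=f$ on the compact homogeneous space $M=\Gamma\backslash PSL(2,\R)$, where $U$ generates the horocycle flow $\phi^U_\R$, and then to identify $u$, up to an additive constant, with the primitive furnished by the representation-theoretic analysis of the cohomological equation for horocycle flows in \cite{FlaFo03}. The argument then splits into two stages: first promote the merely measurable primitive $u$ to an $L^2$ one; second, quote the theory of \cite{FlaFo03} to place that primitive in $\bigcap_{t<1}W^t(M)$ and to obtain continuity.

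\emph{Stage one: from a measurable to a regular primitive.} The crucial point is that a measurable coboundary must automatically annihilate every $U$-invariant distribution that obstructs the existence of a regular primitive. One cannot justify this by the formal identity $D(f)=D(Uu)=-(U^{*}D)(u)=0$, because $u$ is only measurable while the relevant invariant distributions $D$ lie in Sobolev spaces of negative order, so that the pairing $(U^{*}D)(u)$ is not defined. Instead I would exploit the $\phi^U_\R$-invariance of these distributions together with the deviation-of-ergodic-averages theory of \cite{FlaFo03}. Since $f\in W^r(M)$ with $r>2$, in particular $f\in C^0(M)$ and the asymptotic expansion of ergodic integrals from \cite{FlaFo03} is available; the functions $G_T:=\int_0^T f\circ\phi^U_s\,ds$ lie in $W^r(M)$ and, by the invariance of $D$, $T\mapsto D(G_T)$ is linear. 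On the other hand $G_T=u\circ\phi^U_T-u$ almost everywhere, so, $u$ being finite a.e.\ and $\phi^U_T$ preserving the Haar measure, the family $\{G_T\}_{T>0}$ stays bounded in measure, uniformly in $T$. If $f$ failed to annihilate one of these invariant distributions, the expansion of \cite{FlaFo03} would force $G_T$ to diverge on a set of positive measure as $T\to\infty$, contradicting this boundedness in measure. Hence $f$ lies in the kernel of all the relevant invariant distributions, and \cite{FlaFo03} produces a solution $u_0\in\bigcap_{t<1}W^t(M)$ of $Uu_0=f$. Then $u-u_0$ is a measurable $\phi^U_\R$-invariant function, hence constant by the unique ergodicity of the horocycle flow, so $u=u_0+\mathrm{const}$.

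\emph{Stage two: regularity and continuity.} The membership $u_0\in\bigcap_{t<1}W^t(M)$ is precisely the output of the construction in \cite{FlaFo03}: decomposing $L^2(M)$ into irreducible unitary $PSL(2,\R)$-modules and solving $Uu_0=f$ componentwise by the explicit Green operator, one obtains estimates $\|u_0\|_{W^t(M)}\lesssim_t\|f\|_{W^r(M)}$ for all $t<1$, with constants controlled uniformly across the principal series and in the finitely many complementary-series components. The threshold is genuinely Sobolev order $1$: a primitive of higher regularity would require $f$ to annihilate additional, higher-order invariant distributions of the horocycle flow, which a general (measurable) coboundary need not do. Continuity cannot come from Sobolev embedding, as $\dim M=3$ and $W^t\not\hookrightarrow C^0(M)$ for $t<3/2$; it is instead obtained in \cite{FlaFo03} by estimating the Green operator in the uniform norm, component by component: the sup-norm of each smooth component $u_{0,\mu}$ is bounded by the $W^r$-norm of the corresponding component of $f$ times a polynomial weight in the Casimir parameter of $H_\mu$, and for $r>2$ the resulting series is summable, so $u_0$, hence $u$, is continuous.

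\medskip
I expect the main obstacle to be Stage one — showing that a merely measurable primitive forces the invariant-distribution obstructions to vanish — since the formal integration by parts is unavailable and one must instead combine the flow-invariance of those distributions with the sharp asymptotics of ergodic integrals for the horocycle flow. The representation-theoretic bookkeeping of Stage two, pinning the regularity threshold at exactly $1$ and extracting continuity, is technical but proceeds along the established lines of \cite{FlaFo03} and \cite{FlaFo19}.
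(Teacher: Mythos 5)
Your overall strategy (Luzin-type boundedness in measure for $G_T = u\circ\phi^U_T - u$, combined with deviation-of-ergodic-averages estimates from \cite{FlaFo03} to show that $f$ must annihilate the obstructing invariant distributions, then reconstruct the regular primitive and identify it with $u$ by unique ergodicity) is the right skeleton and is in fact the same skeleton used in the paper. However, there is a genuine gap in Stage one: the polynomial-divergence mechanism from \cite{FlaFo03} only covers the horocycle-invariant distributions supported on the \emph{principal and complementary series} (Sobolev order strictly less than $1$). After those obstructions are eliminated, the theory of \cite{FlaFo03} only tells you that $f$ is cohomologous, with a regular transfer function, to the pullback of a \emph{harmonic $1$-form} on the surface — the obstruction coming from the discrete-series components $H_1$, whose associated invariant distributions have Sobolev order exactly $1$. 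For these, ergodic integrals do \emph{not} diverge polynomially; the growth is of "winding" type (sublinear), so your boundedness-in-measure argument does not produce a contradiction. Ruling out a nonzero harmonic-form component in a coboundary with merely measurable primitive requires a separate ingredient: the paper invokes the temporal/winding distributional limit theorems of Dolgopyat and Sarig \cite{DS17} (Theorems 3.2 and 5.1, or Lemma 5.10) to conclude that a nonzero harmonic form cannot be a measurable coboundary. Without that step your argument does not yet establish that $f$ is supported on the discrete series $H_n$ with $n\geq 2$, which is what ultimately yields both the bounded ergodic integrals (hence continuity via Gottschalk--Hedlund) and the $\bigcap_{t<1}W^t(M)$ regularity.

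A secondary, more cosmetic remark: the paper obtains continuity not by summing component-wise sup-norm estimates for the Green operator but by first establishing a \emph{uniform} bound on ergodic integrals (\cite{FlaFo03}, Lemmas 5.4 and 5.7, valid once $f$ is reduced to the $n\geq 2$ discrete series) and then applying the Gottschalk--Hedlund theorem using minimality of the horocycle flow. Your summability-in-the-Casimir-parameter route to continuity is plausible for $r>2$, but if you keep it you should make the dependence on the Casimir parameter explicit; the Gottschalk--Hedlund route bypasses that bookkeeping.
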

\begin{proof} The proof of an analogous result, which does not include the continuity of the primitive, was given in 
\cite{FlaFo19}, Lemma 2.5. We reproduce the argument for the convenience of the reader.

Let us assume that $f$ is a coboundary with measurable primitive and
  derive that in fact the primitive belongs to the space
  $\bigcap_{t<1} W^t (M)$.  By Luzin's theorem, for
  any $\epsilon >0$, there exists a constant $C_\epsilon>0$ such that
  the following holds. For any $t>0$ there exists a measurable set
  $\mathcal B_{\epsilon, t} \subset M$ of volume
  $\text{vol} (B_{\epsilon, t}) \geq 1-\epsilon$ such that
  \begin{equation}
    \label{eq:meas_cob}
    \vert \int_0^t  f (\phi_s (x) ) ds  \vert \leq C_\epsilon  \,, \quad \text{ for all } \, x \in B_{\epsilon, t}\,.
  \end{equation}
If the function $f$ is not in the kernel of all horocycle-invariant
  distributions supported on irreducible representations of the
  principal and complementary series, it follows from the results of
  the authors~\cite{FlaFo03}, Th. 5, that the $L^2$ norm of ergodic
  integrals $f$ diverge (polynomially) and every weak limit of the
  random variables
$$
\frac{ \int_0^t f (h_u x) du} { \Vert \int_0^t f \left(\phi_s (x) \right) ds
  \Vert_{L^2(M)} } \,,
$$
in the sense of probability distributions, is a (compactly supported)
distribution on the real line {\it not supported at the origin}. The
argument is given in detail in the proof of \cite{FlaFo03}, Cor. 5.6,
which established that the Central Limit Theorem does not hold for
horocycle flows. Refined theorems on limit distributions for horocycle
flows were proved in \cite{BuFo14} but under the slightly stronger
(technical) assumption that $f\in W^r(M)$ with
$r>11/2$.  It follows that in this case property \eqref{eq:meas_cob}
implies that $f$ is in the kernel of all invariant distributions
supported on irreducible representations of the principal and
complementary series.

It follows then from the results of \cite{FlaFo03} (in particular, from 
Theorem 1.1 and 1.2 and from
the formulas for invariant distributions of sections 3.1 and 3.2) that $f$ is
cohomologous to a function given by a harmonic form on the unit
tangent bundle, with a primitive in the space
$\bigcap_{ t<1} W^t(M)$.  It then  follows
from results of D.~Dolgopyat and O.~Sarig \cite{DS17} on the so-called
{\it windings} of the horocycle flow, for instance, from
\cite{DS17}, Theorems 3.2 and 5.1, or Lemma 5.10, that a non-zero harmonic form 
cannot be a co-boundary with measurable primitive. 

We have thus reduced our argument to the case of a function $f \in W^r(M)$ supported
on irreducible representations of the discrete series $H_n$ of parameter $n \geq 2$.
 By decomposition in \cite{FlaFo03}, formula (67), 
 for averages along horocycle orbits, by \cite{FlaFo03}, Lemma 5.4 and Lemma 5.7, since by assumption $\mathcal D(f) =0$ for all horocycle
 invariant distributions of Sobolev order $S_{\mathcal D}\leq 1$,  it follows that there exists a constant $C(M) >0$ such that 
 $$
 \vert  \int_0^T  f \circ \phi ^U _t(x) dt  \vert  \leq C(M) \,, \quad \text{ for all } (x,T) \in M\times \R\,.
 $$
 Since the horocycle flow is minimal (uniquely ergodic), it follows by the Gottschalk--Hedlund theorem that the cohomological equation
 has a solution $u\in C^0(M)$, as claimed.  In addition, by \cite{FlaFo03}, Theorem 1.2,  since $\mathcal D (f)=0$ for all horocycle invariant 
 distributions of Sobolev order $S_{\mathcal D} <2$,  and since by ergodicity the zero-average solution is unique,  the solution $u\in W^t(M)$ for all $t<1$.

\end{proof}

 \begin{lemma}
 \label{lemma:CE_Horo}
  There exist (positive) functions $f \in C^\infty(M)$ (in fact, real analytic) such that the cohomological equation
 $$
 u\circ \phi^U_t - u = \int_0^t  f \circ \phi^U_s ds   \,, \quad \text{ for all } t \in \R\,,
 $$
 has a solution $u\in C^0(M) \setminus W^1(M)$.  In addition, for every $k \in \N$ there exists a function $f_k  \in C^\infty (M)$ 
 (real-analytic) such that the cohomological equation has a solution $u_k \in C^0(M) \cap W^t(M) \setminus W^{k+1}(M)$, for all $t<k+1$. 
 \end{lemma}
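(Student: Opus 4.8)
The plan is to read off the two assertions from the detailed solution theory of the cohomological equation for the horocycle flow developed in \cite{FlaFo03}. Write $M=\Gamma\backslash PSL(2,\R)$ and decompose $L^2(M)$ into irreducible unitary components under the right regular representation; besides the constants and the principal and complementary series, this contains discrete series components $\mathcal H_{2m}$ of every even weight $2m\geq 2$, occurring with positive multiplicity because $S$ has genus at least $2$ (for instance the multiplicity of $\mathcal H_{2m}$ equals the dimension of the space of holomorphic forms of weight $2m$ on $S$, which is positive for every $m\geq 1$). By \cite{FlaFo03} (Theorem 1.1 and the explicit description of invariant distributions in sections 3.1 and 3.2): the horocycle-invariant distributions carried by the principal and complementary series have Sobolev order at most $1$, whereas $\mathcal H_{2m}$ carries an invariant distribution $\mathcal D_{2m}$ of Sobolev order exactly $m$; thus the space of invariant distributions contains distributions of arbitrarily large order. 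Combined with the sharp regularity statements of \cite{FlaFo03} (Theorems 1.1 and 1.2 and the quantitative control of the solution operator inside each irreducible), this yields: if $f\in C^\infty(M)$ lies in $\mathcal H_{2m}$, is not a finite combination of $SO(2)$-isotypic vectors, and satisfies $\mathcal D_{2m}(f)=0$, then $f$ is a coboundary for the horocycle flow and its primitive $u$ (unique up to an additive constant) lies in $W^t(M)$ for every $t<m$, while for a suitable (indeed generic) choice of $f$ one has $u\notin W^m(M)$, the obstruction being carried by the critical-order tail of $u$ in the $SO(2)$-decomposition of $\mathcal H_{2m}$.

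Granting this, I would argue as follows. Fix $k\in\N$ and pick a real-analytic $f_k$ lying in a discrete series component $\mathcal H_{2(k+1)}$ of $L^2(M)$, with $\mathcal D_{2(k+1)}(f_k)=0$, chosen so that the primitive acquires a non-trivial critical tail; being $SO(2)$-analytic and Casimir-eigen, $f_k$ is real-analytic on $M$, and it has zero average. By the previous paragraph with $m=k+1$, $f_k$ is a real-analytic coboundary whose zero-average primitive $u_k$ lies in $W^t(M)$ for all $t<k+1$ but not in $W^{k+1}(M)$. It remains to check $u_k\in C^0(M)$. For $k\geq 1$ this is immediate from the Sobolev embedding $W^{3/2}(M)\hookrightarrow C^0(M)$ (recall $\dim M=3$), since $3/2<k+1$ forces $u_k\in W^{3/2}(M)$. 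For $k=0$ the relevant component is $\mathcal H_2$: there the growth of the ergodic integrals of a weight-$2$ function is governed precisely by the order-$1$ distribution $\mathcal D_2$ — the analogue of the Dolgopyat--Sarig winding functional studied in \cite{DS17} — so $\mathcal D_2(f_0)=0$ makes the ergodic integrals $\int_0^T f_0\circ\phi^U_t$ uniformly bounded in $(x,T)$ (by the estimates of \cite{FlaFo03} on discrete-series averages), and by unique ergodicity and the Gottschalk--Hedlund theorem $f_0$ is a continuous coboundary whose continuous primitive coincides up to a constant with $u_0$. This gives both the first assertion (the case $k=0$: a real-analytic coboundary with primitive in $C^0(M)\cap\bigcap_{t<1}W^t(M)\setminus W^1(M)$) and the general one, and it is consistent with, and sharpens, Lemma~\ref{lemma:cocycle_rigidity}. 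Finally, $f_k$ has zero average, hence is not positive; in the applications to time changes one uses instead the positive functions $1$ and $1+\epsilon f_k$ for $\epsilon>0$ small, for which the cohomological equation in question has right-hand side $\epsilon f_k$ and primitive $\epsilon u_k$.

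The step I expect to cause the most difficulty is the \emph{sharpness}: that $u_k$ genuinely fails to lie in the critical space $W^{k+1}(M)$, rather than merely lying in $\bigcap_{t<k+1}W^t(M)$. This is exactly where the precise Sobolev order of $\mathcal D_{2(k+1)}$ enters — one must show that the primitive of the chosen coboundary in $\mathcal H_{2(k+1)}$ is not itself an $SO(2)$-finite vector but has a tail decaying at exactly the critical rate associated with the order-$(k+1)$ invariant distribution, which rests on the quantitative analysis of the solution operator inside each irreducible in \cite{FlaFo03}, together with checking that $f_k$ does not lie in the (codimension-one, within the space of coboundaries supported on $\mathcal H_{2(k+1)}$) set for which this tail vanishes. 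A secondary delicate point is the case $k=0$, which forces the borderline weight-$2$ component where continuity of the primitive cannot come from Sobolev embedding and must instead be recovered, via Gottschalk--Hedlund, from boundedness of the ergodic integrals — the very quantity whose fluctuations are the windings of the horocycle flow analysed in \cite{DS17} — so that the hypothesis $\mathcal D_2(f_0)=0$ is indispensable there.
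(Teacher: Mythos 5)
Your proposal runs the key step backwards, and this is fatal to the argument. You choose $f$ in a discrete series component $\mathcal H_{2m}$ and impose $\mathcal D_{2m}(f)=0$, then claim that for a ``generic'' such $f$ the primitive $u$ still fails to lie in $W^m(M)$, the failure being ``carried by the critical-order tail of $u$ in the $SO(2)$-decomposition.'' This mechanism does not exist. In a discrete series component $H_n$ the space of horocycle-invariant distributions is one-dimensional, spanned by a single $\mathcal D_n$ of Sobolev order $n$ (this is \cite{FlaFo03}, Theorem~1.1(4)); the invariant distributions are the \emph{complete} obstructions to solving the cohomological equation, and the solution operator has tame estimates (Theorem~1.2 of \cite{FlaFo03}). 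Consequently, if $f\in H_n\cap C^\infty(M)$ satisfies $\mathcal D_n(f)=0$, then $f$ is in the kernel of \emph{every} invariant distribution, and the zero-average primitive $u$ is $C^\infty$. There is no residual critical tail; the codimension-one set you allude to is all of the kernel of $\mathcal D_{2m}$, and on it one gets smooth, not merely $W^{<m}$, primitives.

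The paper's proof does precisely the opposite: it picks $f_n$ supported on $H_n$ ($n\geq 2$) with $\mathcal D_n(f_n)\neq 0$. Since the only invariant distribution carried by $H_n$ has order exactly $n$, the function $f_n$ is automatically annihilated by every invariant distribution of Sobolev order $<n$, so by \cite{FlaFo03}, Theorem~1.2, and uniqueness of the zero-average solution, the primitive $u$ lies in $W^t(M)$ for all $t<n-1$; by the converse Theorem~1.3 (or Lemma~4.9) of \cite{FlaFo03}, $\mathcal D_n(f_n)\neq 0$ forces $u\notin W^{n-1}(M)$; and by Lemma~\ref{lemma:cocycle_rigidity} (Gottschalk--Hedlund applied after the bounded-ergodic-integrals estimate, valid since $n\geq 2$) the primitive is also continuous. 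Taking $n=k+2$ gives exactly $u_k\in C^0(M)\cap W^t(M)\setminus W^{k+1}(M)$ for all $t<k+1$, with the first claim being the case $k=0$ ($n=2$). The surrounding observations in your proposal are fine and largely agree with the paper's setup --- the role of Gottschalk--Hedlund in the borderline case, the Sobolev-embedding shortcut for $k\geq 1$, the positivity fix via $1$ and $1+\epsilon f_k$ --- but they cannot repair the central error: you must take $f$ \emph{not} in the kernel of the unique invariant distribution of the chosen discrete series component, not in its kernel.
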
 
 \begin{proof}  The argument is based on the results of~\cite{FlaFo03} on solutions of the cohomological equation for horocycle flows.
 We follow the notations of \cite{FlaFo03}. 
 
 Let $f_n$ be a function supported on the irreducible components of the discrete series of 
 Casimir parameter  $\mu= - n^2 +n$ ($n \in \N\setminus \{0\}$) such that there exists a horocycle invariant distribution  $\mathcal D_n \in \mathcal I_n$ 
 such that $\mathcal D_n(f) \not=0$.  
 
 By Theorem 1.1 (4) of \cite{FlaFo03} all horocycle invariant distributions $\mathcal D \in \mathcal I_n$ have Sobolev order $S_{\mathcal D} =n$,
 hence by the converse Theorem 1.3 (or Lemma 4.9)  of \cite{FlaFo03}, since $\mathcal D_n(f)\not=0$,  the cohomological equation $U u= f$  has  no solution 
 $u \in W^{n-1}(M)$.  Equivalently, the integrated form of the cohomological equation, given in the statement of the lemma, has no solution $u\in W^{n-1}(M)$.
 
 By Lemma~\ref{lemma:cocycle_rigidity}  in all the above mentioned cases, whenever $n\geq 2$,  there exists a solution $u \in C^0(M)$. In addition, by \cite{FlaFo03}, 
 Theorem 1.2,  since $\mathcal D (f_n)=0$ for all horocycle invariant distributions of Sobolev order $S_{\mathcal D} <n$,  and since by ergodicity the zero-average 
 solution is unique,  the solution $u\in W^t(M)$ for all $t<n-1$.
\end{proof}

  \subsection{Kochergin  flows}  
  
  Let $\phi_\R$ be a (Kochergin)  locally-Hamiltonian flow on an orientable surface of higher genus with saddle-like singularities at a (finite) set $\Sigma$.   
  Let  $X$ denote the generator of the flow and $\omega$ the smooth invariant area form.
  
   Let $\eta_X :=\imath_X \omega$ the unique closed $1$-form whose
  kernel coincides with  the orbit foliation of the flow. 
  The cohomology class $[\eta_X] \in H^1(M, \Sigma), \R)$, called the Katok fundamental class,
  is a complete local invariant of  the smooth conjugacy class of the flow with the space of locally Hamiltonian vector fields which coincide on a 
  neighborhood of the singularity set $\Sigma$. 
  
  Under the hypothesis that the Kochergin flow is quasi-minimal (in the sense that all non-singular semi-orbits are dense in $M$), there exists a translation 
  structure on $M$ (a complex structure and a holomorphic $1$-form on $M$) with horizontal vector field $\bar X$ and a smooth (real-analytic) function 
  $W: M \to \R^+$ which vanishes at finite order at $\Sigma$ such that   $X= W \bar X$. Let $\bar X^\perp$ denote the orthogonal vector field such that $\{\bar X, \bar X^\perp\}$ 
  is a positively oriented frame for the translation structure.
  
  For all $r>0$ let $W^r(M)$ denote the weighted Sobolev space for the translation structure $\{\bar X, \bar X^\perp\}$, introduced in \cite{Fo97}, \cite{Fo07}.
  We recall that the space $W^1(M)$ is equivalent to the standard Sobolev space on the compact surface $M$.
  
  \begin{lemma} 
   \label{lemma:CE_Koch}
  For almost all fundamental classes $[\eta_X] \in H^1(M, \Sigma, \R)$, there exist (positive) functions $f \in C^\infty_0(M\setminus \Sigma)$
  such that the cohomological equation
 $$
 u\circ \phi^X_t - u = \int_0^t  f \circ \phi^X_s ds   \,, \quad \text{ for all } t \in \R\,,
 $$
 has a solution $u\in C^0(M) \setminus W^1(M)$.  In addition, for every $k \in \N$ there exists a function $f_k  \in C^\infty_0 (M\setminus \Sigma)$ 
 such that the cohomological equation has a solution $u_k \in C^0(M) \cap W^k (M) \setminus W^{k+1} (M)$.  In fact,  there exists 
  $\lambda \in (0,1)$ such that     $u_k  \in W^{k+ r}(M)$ for any $r \in (0,1)$ such that $r <  \lambda$. 
  \end{lemma}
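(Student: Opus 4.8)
The plan is to follow the proof of Lemma~\ref{lemma:CE_Horo}, with the Flaminio--Forni theory of the cohomological equation for horocycle flows replaced by the Forni theory for area-preserving flows on higher genus surfaces (\cite{Fo97}, \cite{Fo07}; see also \cite{MMY05}). Recall that, in the notation above, the Kochergin flow $\phi^X_\R$ has generator $X=W\bar X$, with $W$ real-analytic, positive on $M\setminus\Sigma$ and vanishing to finite order along $\Sigma$; equivalently $\phi^X_\R$ is the time-change of the translation flow $\bar\phi_\R$ (generated by $\bar X$) by the function $1/W$. The first step is to reduce everything to $\bar\phi_\R$. For $f\in C^\infty_0(M\setminus\Sigma)$ the equation $Xu=f$ is the same as $\bar X u=g$ with $g:=f/W\in C^\infty_0(M\setminus\Sigma)$, and such a $g$, vanishing to infinite order near $\Sigma$, lies in every weighted Sobolev space $W^s(M)$. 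On functions supported away from $\Sigma$, an $X$-invariant distribution $\mathcal D$ corresponds to the $\bar X$-invariant distribution $v\mapsto \mathcal D(Wv)$ of the same Sobolev order, and $\mathcal D(f)=(W\mathcal D)(g)$; the $X$-invariant distributions supported on $\Sigma$ — for instance the Dirac masses $\delta_p$, $p\in\Sigma$, which solve $XD=0$ since $X$ vanishes at $\Sigma$, and their finitely many invariant transverse derivatives — annihilate every element of $C^\infty_0(M\setminus\Sigma)$ and so play no role. Finally, for a.e.\ $[\eta_X]$ the flow $\bar\phi_\R$, hence $\phi^X_\R$, is uniquely ergodic and minimal off $\Sigma$, and the time reparametrization gives $\int_0^T (f\circ\phi^X_t)(x)\,dt=\int_0^{w(x,T)}(g\circ\bar\phi_u)(x)\,du$, so the Birkhoff integrals of $f$ for $\phi^X_\R$ are bounded once those of $g$ for $\bar\phi_\R$ are.

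The inputs from Forni's theory that I would quote, valid for a.e.\ $[\eta_X]$ (Lebesgue genericity on $H^1(M,\Sigma,\R)$ being equivalent to Masur--Veech genericity of the associated translation structures), are: \emph{(i)} the $\bar X$-invariant distributions not supported on $\Sigma$ form a family filtered by Sobolev order, with only finitely many of Sobolev order below any given bound, and with one of Sobolev order exactly $k+1+\lambda$ immediately above each integer $k+1$, for a fixed $\lambda\in(0,1)$ depending only on the Kontsevich--Zorich Lyapunov spectrum of the translation structure; \emph{(ii) (obstruction)} if an invariant distribution $\mathcal D$ of Sobolev order $S_{\mathcal D}=s_0$ has $\mathcal D(g)\ne0$, then $\bar X u=g$ has no solution $u\in W^{s_0-1}(M)$; \emph{(iii) (existence)} if $g\in W^s(M)$ and $\mathcal D(g)=0$ for all invariant $\mathcal D$ with $S_{\mathcal D}<s$, then $\bar X u=g$ has a solution $u\in W^t(M)$ for every $t<s-1$; and \emph{(iv) (deviations)} if $g$ is annihilated by all $\bar X$-invariant distributions of Sobolev order $\le1$ (those carrying the polynomial deviation of Birkhoff integrals, and in particular $g$ has zero mean for the translation area), then $\sup_{L,x}|\int_0^L (g\circ\bar\phi_u)(x)\,du|<\infty$, by the deviation estimates of \cite{Fo97}, \cite{Fo07} and the Gottschalk--Hedlund criterion.

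With these in hand the construction is routine. Fix $k\in\N$. The $\bar X$-invariant distributions not supported on $\Sigma$ of Sobolev order $\le k+1$ span a finite-dimensional space, and by (i) there is one, $\mathcal D_k$, of Sobolev order $s_0:=k+1+\lambda$, with nothing in the spectrum strictly between $k+1$ and $s_0$. Since, by Forni's explicit description, the basic currents restrict to linearly independent functionals on $C^\infty_0(M\setminus\Sigma)$, finite-dimensional linear algebra provides $g_k\in C^\infty_0(M\setminus\Sigma)$ with $\mathcal D(g_k)=0$ for every invariant $\mathcal D$ of Sobolev order $\le k+1$ and $\mathcal D_k(g_k)\ne0$; put $f_k:=Wg_k\in C^\infty_0(M\setminus\Sigma)$. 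By (iii) the equation $Xu=f_k$ has a solution, which by unique ergodicity is unique up to an additive constant; call it $u_k$. By (iii), $u_k\in W^t(M)$ for every $t<s_0-1=k+\lambda$, i.e.\ $u_k\in W^{k+r}(M)$ for all $r\in(0,\lambda)$; by (ii) no solution lies in $W^{k+\lambda}(M)$, hence $u_k\notin W^{k+1}(M)$ since $W^{k+1}(M)\subseteq W^{k+\lambda}(M)$; and since $g_k$ is annihilated by all $\bar X$-invariant distributions of Sobolev order $\le1$, (iv) together with the reparametrization identity makes $\int_0^T(f_k\circ\phi^X_t)(x)\,dt$ uniformly bounded, whence $u_k\in C^0(M)$ by Gottschalk--Hedlund. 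This is the ``in addition'' part, and $k=0$ gives the first assertion. (The $f_k$ have zero mean, so are not positive, but they are bounded; in the applications of Theorem~\ref{thm:main1a} they appear as differences $f_1-f_2$ of positive time-change functions, e.g.\ $f_2\equiv c$ a large constant and $f_1=c+f_k$.) The main obstacle is the precise structure of the Sobolev-order spectrum of the invariant distributions — the uniform gap $\lambda$ above the integers and the linear independence of the basic currents as functionals on $C^\infty_0(M\setminus\Sigma)$ — which I would import from \cite{Fo97}, \cite{Fo07}, together with the measure-theoretic transfer of genericity to the space of fundamental classes.
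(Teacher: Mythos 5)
Your proposal follows essentially the same strategy as the paper's own proof: reduce the Kochergin equation $Xu=f$ to the translation-flow equation $\bar Xu=W^{-1}f$ for $f\in C^\infty_0(M\setminus\Sigma)$, invoke Forni's classification of $\bar X$-invariant distributions by Sobolev order in terms of the Kontsevich--Zorich exponents, choose $f$ in the joint kernel of all low-order distributions but not of one of strictly higher order, and then use the existence and deviation results together with Gottschalk--Hedlund to upgrade boundedness of Birkhoff integrals to continuity of the transfer function. Your remark that the $f_k$ necessarily have zero mean, so that ``positive'' in the statement should be read as applying to the difference $f_1-f_2$ of positive time-change functions, is correct and addresses a real infelicity in the statement.

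Two points where your proposal diverges from the paper deserve attention. First, you appeal to a black-box ``obstruction'' theorem, namely: if an invariant distribution $\mathcal D$ of Sobolev order $s_0$ satisfies $\mathcal D(g)\neq0$ then $\bar Xu=g$ has no solution in $W^{s_0-1}(M)$. The paper does not rely on such a sharp converse regularity statement for translation flows. Instead it uses only the base case — invariant distributions of Sobolev order $\leq 1$ obstruct $L^2$ solutions, via the deviation theorem \cite{Fo02} — and then an algebraic trick: if $u\in W^1(M)$ solved $\bar Xu=f$, then $U:=\bar X^\perp u\in L^2(M)$ would solve $\bar XU=\bar X^\perp f$, which is impossible because $\mathcal D_{i,0}(\bar X^\perp f)=-\mathcal D_{i,1}(f)\neq0$ and $\mathcal D_{i,0}$ has Sobolev order $\leq 1$. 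You should either locate a precise reference for your input (ii) at the relevant Sobolev orders, or replace it by the paper's $\bar X^\perp$-derivative argument, which only needs the $L^2$ deviation result.

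Second, and more seriously as written, your application of Gottschalk--Hedlund is too quick. A translation flow (or Kochergin flow) on a higher-genus surface is only quasi-minimal: there are singular leaves, the flow is not minimal, and neither the flow nor its first-return IET is continuous in the surface topology. Gottschalk--Hedlund requires a minimal continuous flow on a compact space. The paper makes explicit that \cite{Fo07} only gives boundedness of the solution, and that continuity is then obtained by applying Gottschalk--Hedlund after passing to the Denjoy-type compactification of \cite{MMY05}, Section 2.1, in which the return dynamics is continuous and minimal. This step is necessary for the conclusion ``$u\in C^0(M)$'' and should not be omitted.

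Finally, your ``input (i)'' slightly oversimplifies the Sobolev-order spectrum: by \cite{Fo07}, Th.~B2, the orders are $\mathcal O(\mathcal D_{i,j})=j+1-\lambda_i$ for $i\in\{2,\dots,2g-1\}$, $j\geq0$, so the orders just above an integer $k+1$ are generally $\min(k+1+\lambda_g,\,k+2-\lambda_2)$ and not simply $k+1+\lambda$ for a single $\lambda$. The paper's construction tracks this carefully and obtains $\lambda=1-\lambda_g$; your version works with the appropriate $\lambda$ inserted, but the structure of the spectrum should be stated accurately rather than idealized.
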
 
  \begin{proof} 
  For functions supported away from the singularity set $\Sigma$ (in fact, more generally for functions satisfying a finite order vanishing condition on their jet at $\Sigma$), the cohomological equation for the Kochergin  flow is equivalent to the cohomological equation for  a translation flow. Indeed,  since the cohomological equations
  $$
  X u = f  \quad \text{ and } \quad    \bar X u = W^{-1}  f 
  $$
  are equivalent, the problem is reduced to characterizing solutions of the cohomological equation $\bar X u= W^{-1}f$ for the translation flow 
  and for the function $W^{-1} f \in C^\infty_0 (M\setminus \Sigma)$ (more generally, for $W^{-1} f  \in W^r(M)$) .
  
  \smallskip
  The cohomological equation for translation flows can then be reduced to a cohomological equation for its return map, which is an Interval Exchange Transformation.
  It follows from \cite{MMY12}, \cite{MY16} and \cite{Fo07}  that the space of obstructions  to the existence of a solution $u\in C^0(M)$ has dimension equal to the genus 
  $g$ of the surface (since the Kontsevich--Zorich cocycle is non-uniformly hyperbolic on strata \cite{Fo02}).  In \cite{Fo07} it was proved only that the solution is bounded.
  However, this conclusion is derived from a uniform bound on ergodic integrals (see \cite{Fo07}, Th. 5.10) and continuity follows by applying the Gottshalk--Hedlund 
  theorem with respect to an appropriate topology. Indeed, the translation flow and its return map are not continuous with respect to the topology of the surface or the 
  induced of a transverse interval, but it is continuous and minimal with respect to an appropriate topology given by a Denjoy-type construction, as in \cite{MMY05}, 
  Section 2.1.
  
  Let then $f\in C^\infty_0(M\setminus\Sigma)$ in the kernel of the above mentioned $g$-dimensional obstruction, so that the corresponding cohomological equation has
  a continuous solution.   Let 
  $$
  \lambda_1 =1 \geq \lambda_2 \geq ... \geq \lambda_g>0 >  \lambda_{g+1} \geq \dots \geq \lambda_{2g} =-1\,,
  $$
  denote the Kontsevich--Zorich exponents for the  relevant connected component of a stratum of the moduli space of Abelian differential.
  
  The Sobolev order of all invariant distributions was computed in \cite{Fo07}, Th.~B2: there exists a basis  $\{{\mathcal D}_{i,j}\}$ of the space of invariant distributions vanishing on constant functions such that the Sobolev order ${\mathcal O}(  {\mathcal D}_{i,j} )$ of the distribution ${\mathcal D}_{i,j}$ is given by the formula
  $$
  \mathcal O(  {\mathcal D}_{i,j}) = j+1 - \lambda_i \,, \quad \text{ for all } \,\,i\in \{2, \dots, 2g-1  \}, \,\, j \in \N\cup \{0\}\,. 
  $$
  Let $\bar X^\perp$ denote the translation vector field orthogonal to $\bar X$ with respect to the translation structure. We recall that we have
  $$
  {\mathcal D}_{i,j}  =   (\bar X^\perp)^j  {\mathcal D}_{i,0} \,, \quad \text{ for all }   \,\,i\in \{2, \dots, 2g-1  \}, \,\, j \in \N\cup \{0\}\,. 
  $$
  The set $ \mathcal I^1_{\bar X}:= \{1, {\mathcal D}_{2,0} , \dots , {\mathcal D}_{g,0}\}$ is a basis of the subspace of all $\bar X$-invariant distributions of 
  Sobolev order $\leq 1$, which  form the obstructions to the existence of continuous solutions.   
  
  By the deviation results \cite{Fo02}, the ergodic integrals of  any function $F\in H^1(M)$ which does not belong to the joint kernel of the space $\mathcal I^1_{\bar X}$ 
  have uniform polynomial growth on a subset of $M$ of uniformly positive measure along a subsequence of times. In particular, it follows that the cohomological equation 
  $\bar X U= F$ does not have any solution  $U\in L^2(M)$.  
  
  Let then $f_i\in C_0^\infty(\setminus\Sigma)$ be a function in the kernel of $\mathcal I^1_{\bar X}$, so that
  as pointed above the cohomological equation $\bar X u_i=f_i$ has a continuous solution, but with the property that there exists $i \in \{2, \dots, g\}$ 
  such that
  $$
  \mathcal D_{i, 1} ( f_i ) \not =0 \quad \text{ and } \quad  \mathcal D_{a, 1} ( f_i ) =0\,,  \quad \text{for }  a\not =i  \in \{2, \dots, 2g-1\}\,.
  $$
 If follows that the unique zero-average solution $u_i\in C^0(M)$ of the equation $\bar X u_i=f_i$ does not belong to the Sobolev space
 $W^1(M)$, otherwise $U_i:=\bar X^\perp u_i \in L^2(M)$ would be a solution of the cohomological equation $\bar X U_i = X^\perp f_i$, which has 
 no square-integrable solutions since
 $$
  \mathcal D_{i, 0} ( X^\perp f) =  - (X^\perp\mathcal D_{i, 0}) (f)  =-   \mathcal D_{i, 1} ( f_i ) \not =0\,.
 $$ 
 However, by \cite{Fo07} it follows that  $u_i \in W^t (M)$ for all $t < 1-\lambda_i$.  Hence there exists $f \in C^\infty_0(M\setminus \Sigma)$ such that 
the cohomological equation $\bar X u=f$ has its unique zero-average  solution $u \in W^t(M) \setminus W^1(M)$ for all $t <1-\lambda_g$. 
The statement is thus proved for $k=0$ with $\lambda := 1-\lambda_g >0$.

The statement for all integer $k >0$ can be proved by a similar argument. Let $\mathcal I^{k+1}_{\bar X}$ denote the space of ${\bar X}$-invariant distributions
of Sobolel order $\leq k$. By  \cite{Fo07} whenever $f \in C^\infty_0(M\setminus \Sigma)$ belongs to the joint kernel of the space $\mathcal I^{k+1}_{\bar X}$, the
solution $u$ of the cohomological equation ${\bar X} u=f$ belongs to the Sobolev space $W^{k} (M)$.
 
  Let then $f_{k,i} \in C_0^\infty(\setminus\Sigma)$ be a function in the kernel of $\mathcal I^{k+1}_{\bar X}$, so that
  as pointed above the cohomological equation $\bar X u=f$ has a continuous solution which belongs to $W^k(M)$, but with the property that there exists 
  $i \in \{2, \dots, g\}$  such that
  $$
  \mathcal D_{i, k+1} ( f_{k,i} ) \not =0 \quad \text{ and } \quad  \mathcal D_{a, h} ( f_{k,i} ) =0\,,  \quad \text{for }  (a, h) \not = (i, k+1)  \,.
  $$
  Under these hypotheses by \cite{Fo07} the cohomological equation $Xu_{k,i} = f_{k,i}$ has its unique zero-average solution $u_{k,i} \in W^t(M) \setminus W^{k+1}(M)$
  for  all $t < k+ 1-\lambda_i$.
    
  \end{proof} 
  
  \subsection{Heisenberg nilflows}  
  
  Let $\phi_\R$ be a nilflow on a Heisenberg nilmanifold $M= \Gamma \backslash H_3$, which is a quotient of the $3$-dimensional Heisenberg group
  $H_3$ by a (co-compact) lattice $\Gamma < H_3$.  A Heisenberg nilmanifold is Seifert space, in the sense that it is a circle fibration, over a 
  $2$-dimensional torus. A non-vertical nilflow on $M$ projects onto a linear flow on the $2$-torus. By the classical theory of nilflows, a Heisenberg 
  nilflow is uniquely ergodic if and only if its toral projection is, and so if and only if its toral projection is an irrational linear flow.
  
  A Heisenberg nilflow satisfies a Diophantine condition if and only if its toral projection is a linear toral flow which satisfies a Diophantine condition.
    
  \begin{lemma}
   \label{lemma:CE_Heis}
   For all Diophantine Heisenberg nilflows on a Heisenberg nilmanifold $M$ and for all  $f \in C^\infty(M)$,
  if $u\in L^2(M)$ is a solution the cohomological equation
 $$
 u\circ \phi^X_t - u = \int_0^t  f \circ \phi^X_s ds   \,, \quad \text{ for all } t \in \R\,,
 $$
then $u \in C^\infty(M)$. 
  \end{lemma}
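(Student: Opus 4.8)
**Proof plan for Lemma \ref{lemma:CE_Heis} (hypoellipticity / regularity of $L^2$ solutions for Diophantine Heisenberg nilflows).**

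The plan is to reduce the problem to the representation-theoretic decomposition of $L^2(M)$ and to invoke the a priori Sobolev estimates for the cohomological equation of Heisenberg nilflows established in \cite{FlaFo06}. First I would decompose $L^2(M)$ into irreducible unitary representations of $H_3(\R)$: the infinite-dimensional ones $\mathcal H_n$ (parametrized by a nonzero central character $n\in\Z\setminus\{0\}$, via the Stone--von Neumann theorem, each occurring with multiplicity $|n|$), together with the abelian part, which is $L^2(\T^2)$ pulled back from the toral projection. On the abelian part the equation $Xu=f$ is just the cohomological equation over the linear toral flow, and the Diophantine hypothesis gives $u\in C^\infty(\T^2)$ by the classical Fourier-coefficient estimate $|\hat u(k)| = |\hat f(k)|/|2\pi\langle k,\omega\rangle|$, which decays rapidly because $f$ is smooth and $\omega$ is Diophantine (the zero-mode forces $\hat f(0)=0$, consistent with $f$ having the correct mean). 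So the real content is the infinite-dimensional part.

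On each isotypic component $\mathcal H_n$, the generator $X$ of the nilflow acts (after identifying $\mathcal H_n$ with a sum of copies of $L^2(\R)$) as a first-order differential operator of the form $\partial_\xi + 2\pi i n \xi$ (up to normalization by the coordinates of the toral rotation vector), i.e. a ``quantum harmonic oscillator''-type operator, and the cohomological equation $Xu=f$ restricted to $\mathcal H_n$ is solved fiberwise. The key point, which is exactly the substance of \cite{FlaFo06}, is the \emph{uniform-in-$n$ a priori estimate}: under the Diophantine condition there are constants $C,\sigma>0$ such that, for $f$ in the kernel of all invariant distributions of Sobolev order $\le s$, the solution $u$ on $\mathcal H_n$ satisfies $\|u\|_{W^{s'}(\mathcal H_n)} \le C |n|^{\sigma}\,\|f\|_{W^{s}(\mathcal H_n)}$ with a fixed loss $s-s'$ independent of $n$ (the loss and the power $|n|^\sigma$ absorb the small divisors coming from the resonances of the oscillator, which is where the Diophantine condition enters). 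Then I would run the following bootstrap: since $u\in L^2(M)$ is given, $f=Xu\in C^\infty(M)$ lies in the kernel of every $X$-invariant distribution, because for such a distribution $D$ we have $D(f)=D(Xu)=0$ (here one must note that the $L^2$ solution, once it exists, pairs correctly against invariant distributions, which are continuous on Sobolev spaces of sufficiently high order — $u$ itself need only be $L^2$ to start, and one checks that $D(Xu)=0$ by approximating $u$ in $L^2$ by smooth functions and using that $XD=0$ in the distributional sense). Applying the a priori estimate with $s$ arbitrarily large shows $u\in W^{s'}(M)$ for all $s'$, hence $u\in\bigcap_s W^s(M)=C^\infty(M)$ by Sobolev embedding.

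The main obstacle, and the step requiring care, is the uniformity in the central parameter $n$: the small divisors in the oscillator model on $\mathcal H_n$ degrade as $|n|\to\infty$, and one must show that the Diophantine condition on the projected toral rotation vector controls them with only a polynomial loss $|n|^\sigma$, so that summing $\|u\|_{W^{s'}(\mathcal H_n)}^2$ over $n$ converges once $s$ exceeds $s'+\sigma+1$ (say). This is precisely the estimate carried out in \cite{FlaFo06}, so in the write-up I would state it as a quoted input and reduce the lemma to it; the only genuinely new bookkeeping is (i) checking that an a priori-merely-$L^2$ solution automatically kills all invariant distributions, and (ii) patching the abelian and non-abelian parts together, both of which are routine. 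A secondary technical point is that the basis of invariant distributions in each $\mathcal H_n$ has Sobolev orders tending to infinity with $n$, so for fixed $s$ only finitely many of them are relevant as obstructions; but since $f=Xu$ is a coboundary it kills all of them regardless, so this causes no trouble and in fact no obstruction-vanishing hypothesis on $f$ needs to be imposed — it is automatic.
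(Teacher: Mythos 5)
Your overall architecture matches the paper's proof — decompose $L^2(M)$ into irreducible unitary representations via Stone--von Neumann, handle the abelian/toral part by the classical Fourier estimate under the Diophantine condition, show that $f$ annihilates all $X$-invariant distributions, and then quote the tame estimates from \cite{FlaFo06} to conclude that the (unique up to constants, so coinciding with the given) solution lies in $C^\infty(M)$. But the step you call ``routine bookkeeping'' — deducing from the mere existence of an $L^2$ solution that $D(f)=0$ for every $X$-invariant distribution $D$ — is in fact the crux of the lemma, and the argument you give for it does not work.

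The problem is that the relevant $X$-invariant distributions $D$ are \emph{not} in $L^2$: in the infinite-dimensional isotypic components they have negative Sobolev order (the normalized generator $\mathcal D_\lambda$ has order $-1/2$ up to scaling, which is why $\sqrt t$ appears in the limit theorem). So the pairing $D(Xu)$ cannot be ``integrated by parts'' against an $L^2$ function $u$: if $D \in W^{-s}(M)$ with $s>0$, then $D$ acts on $W^s$, and to transfer $X$ onto $D$ you would need $u\in W^{s+1}$, not just $u\in L^2$. Approximating $u$ by smooth functions $u_n$ in $L^2$ gives $D(Xu_n)=0$ for each $n$, but $Xu_n\to f$ only in $W^{-1}$, which is not a topology in which $D$ is continuous, so you cannot pass to the limit. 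In short, the fact that cohomological obstructions of positive Sobolev order are killed by a solution of \emph{lower} regularity than their order is precisely what needs to be established, not an automatic consequence of distributional duality.

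What the paper actually does here is quantitative: projecting the identity $u\circ\phi^X_t-u=\int_0^t f\circ\phi^X_s\,ds$ onto each $H_\lambda$ gives the uniform bound $\Vert \int_0^t f_\lambda\circ\phi^X_s\,ds\Vert_{L^2}\le 2\Vert u_\lambda\Vert_{L^2}$ for all $t$, and then it invokes the deviation-of-ergodic-averages / limit theorem (\cite{FoKa20a}, Cor. 7.2) asserting that $t^{-1/2}\Vert \int_0^t f_\lambda\circ\phi^X_s\,ds\Vert_{L^2}\to\vert D_\lambda(f_\lambda)\vert$. The boundedness forces $D_\lambda(f_\lambda)=0$. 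This is the genuinely nontrivial input your write-up would need to replace your step (i): it is exactly the mechanism by which low-regularity cohomological triviality propagates to the vanishing of higher-order obstructions, via polynomial lower bounds on ergodic integrals when the obstruction does not vanish. Without it (or some equivalent growth estimate), your proof has a gap precisely at the point you flagged as needing care.
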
 
  
 \begin{proof} By the theory of unitary representations of Heisenberg nilflows, the cohomological equation can be split into a countable number
 of equations for the orthogonal projections of the function $f$ onto the irreducible components of the space $L^2(M)$. 
 
 For the (one-dimensional)  irreducible components, their sum is the subspace of $L^2(M)$ of functions which are pull-back of toral functions, hence the cohomological equation
 for the Heisenberg nilflow reduces to the cohomological equation for a Diophantine linear flow, which has a solution $\bar u \in C^\infty(\T^2)$ by the 
 Diophantine condition.  
 
 All the remaining irreducible unitary representations are infinite dimensional. If the cohomological  equation has a continuous, hence square-integrable
 solution, it follows that $f$ belongs to the kernel of all $X$-invariant distributions. In fact, let $H_\lambda \subset L^2(M)$ denote an irreducible component
 of central parameter $\lambda \in \R$. It is well known that irreducible unitary representations are classified by their central parameters (by the Stone-Von Neumann 
 theorem)  and that for each central parameter the corresponding irreducible representation $H_\lambda$ appears with finite multiplicity as a direct summand of 
 $H_\lambda$.

By orthogonal projection of the cohomological equation on $H_\lambda$ we derive that the cohomological equation
$$
 u_\lambda \circ \phi^X_t - u_\lambda = \int_0^t  f_\lambda \circ \phi^X_s ds   \,, \quad \text{ for all } t \in \R\,,
 $$
 for the orthogonal projection $f_\lambda \in H_\lambda \cap C^\infty(M)$ of the function $f$ onto $H_\lambda$, has a solution
 $u_\lambda \in L^2(M)$, hence in particular
 $$
 \Vert \int_0^t  f_\lambda \circ \phi^X_s ds  \Vert_{L^2(M)}  \leq   2 \Vert u_\lambda \Vert_ {L^2(M)} \quad \text{ for all } t \in \R \,.
 $$
 It follows that $f_\lambda$ belongs to the kernel of the (one-dimensional) space of invariant distributions supported on $H_\lambda$.
 In fact, there exists an appropriately normalized generator $\mathcal D_\lambda$ of the space of $X$-invariant distributions supported on
 $H_\lambda$ such that (see for instance \cite{FoKa20a}, Cor. 7.2) 
 $$
 \Vert  \frac{1}{\sqrt{t}} \int_0^t  f_\lambda \circ \phi^X_s ds  \Vert_{L^2(M)}   \to   \vert D_\lambda (f_\lambda) \vert\,.
 $$
 We have thus proved that, under the hypothesis that the cohomological equation for $f \in C^\infty(M)$ (in fact, $f \in W^r(M)$ with
 $r>1/2$ is sufficient) has a  solution $u\in L^2(M)$, then 
 $$
 D(f) =0  \,, \quad \text{ for  all  $X$-invariant distributions} \,.
 $$
 Finally, it follows from the results of \cite{FlaFo06} on the cohomological equation for Heisenberg nilflows that under the above hypothesis that
 $f \in C^\infty(M)$ belongs to the kernel of all invariant distributions, the solution $u\in L^2(M)$, unique up to additive constants, of the cohomological
 equation in fact belongs to $C^\infty(M)$, as stated.
 \end{proof}  
 
 \section{Main results}
 \label{sec:proofs}
 
 \subsection{Time changes}
 
 We give below more precise statements and proofs of Theorems \ref{thm:main1a} and  \ref{thm:main2a}  for time changes.
 
  \begin{theorem}  Let $\phi^f_\R$ and $\phi^g_\R$ be time-changes of a horocycle flow or of a generic Kochergin flow. 
 The set of pairs of functions $(f, g)$ such that the flows $\phi^f_\R$  and $\phi^g_\R$ are $C^1$ conjugate is a  positive, but 
 finite, codimension subspace of the space of pair of  functions $(f,g)\in C^\infty(M)^2$ such that $\phi^f_\R$ and $\phi^g_\R$ are  $C^0$  conjugate. 
 
 In addition, for all
 $k\in \N$, the set of of pairs of functions $(f, g)$  such that the conjugacy between the flows $\phi^f_\R$ and $\phi^g_\R$  is 
 of class $C^0 \cap  W^{k+1}$  is a  positive, but finite, codimension subspace of the space of  pair functions $(f,g)\in C^\infty(M)^2$ such that the conjugacy 
 between the flows $\phi^f_\R$ and $\phi^g_\R$  is  of class $C^0 \cap W^{k}$.
 
 For horocycle flows, and for any given $T>0$,  an analogous result holds for the  time-$T$ maps $\phi^f_T$ and $\phi^g_T$ of the time-changes 
 $\phi^f_\R$ and $\phi^g_\R$. 
  \end{theorem}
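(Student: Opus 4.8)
\emph{Step 1 (reduction to the cohomological equation, with regularity).} The plan is first to reduce $C^l$-conjugacy of the time changes $\phi^f_\R$ and $\phi^g_\R$ to the existence of a $C^l$ solution of a scalar cohomological equation over the base flow $\phi_\R$. Since horocycle flows and uniquely ergodic Kochergin flows, and hence all of their time changes, are uniquely ergodic, any $C^0$ conjugacy between $\phi^f_\R$ and $\phi^g_\R$ automatically preserves the canonical invariant measures and is therefore a measurable isomorphism. For the horocycle flow, Ratner's rigidity theorem (Theorem~\ref{thm:Ratner_rig}) then produces an element $\psi=\psi_C$ of the essential centralizer, which is finite, such that $f\circ\psi-g$ is a measurable coboundary; for a generic Kochergin flow the same follows because a conjugacy of time changes is a flow equivalence of the common orbit foliation, and by the arguments in the proof of Lemma~\ref{lemma:Koch_centr} (applied to orientation- and transverse-measure-preserving homeomorphisms) such a flow equivalence is, up to a shift along orbits, trivial. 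Conversely, by Lemma~\ref{lemma:time_changes1} a $C^l$ solution of $u\circ\phi_t-u=\int_0^t(f\circ\psi-g)\circ\phi_s\,ds$ yields a $C^l$ conjugacy. Finally, tracking regularity through the structure of the conjugacy --- it is the composition of the smooth map $\psi$, a smooth automorphism of $\phi^g_\R$ (whose essential centralizer is at most countable, and finite for time changes of the horocycle flow, by the Ratner property and \cite{KL17}), and a shift $x\mapsto\phi^g_{\tilde u(x)}(x)$ along $\phi_\R$-orbits --- shows that a $C^l$ conjugacy forces $\tilde u\in C^l$, since in a flow box $(s,x)\mapsto\phi_s(x)$ is a diffeomorphism. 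Thus $\phi^f_\R$ and $\phi^g_\R$ are $C^l$ conjugate iff, for some $\psi$ in the finite essential centralizer, the cohomological equation for $h:=f\circ\psi-g$ has a $C^l$ solution.

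\emph{Step 2 (regularity of the primitive in terms of invariant distributions).} Using that horocycle flows and generic Kochergin flows are stable on the relevant (weighted) Sobolev spaces, together with Lemma~\ref{lemma:cocycle_rigidity} and \cite{FlaFo03}, Th.~1.2, for the horocycle flow and Lemma~\ref{lemma:CE_Koch} and \cite{Fo07}, Th.~B2, for Kochergin flows, the next step is to show: a smooth $h$ is a measurable coboundary (equivalently its primitive lies in $C^0(M)\cap\bigcap_{t<1}W^t(M)$) exactly when $h$ lies in the kernel $\mathcal M_0$ of all invariant distributions of Sobolev order $\le 1$; and, for $k\ge 0$, the equation $Xu=h$ has a solution in $C^0(M)\cap W^{k+1}(M)$ --- automatically $C^\infty$ along the flow, and of class $C^1$ in the first assertion since $C^1\subset W^1$ on $M$ --- exactly when $h$ lies in the kernel $\mathcal M_{k+1}$ of all invariant distributions of Sobolev order $\le k+2$. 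Consequently the set of $C^l$-conjugate pairs is the finite union over $\psi$ of the closed linear subspaces $\{(f,g):f\circ\psi-g\in\mathcal M_l\}$, and each level-$(k{+}1)$ subspace is contained in the corresponding level-$k$ subspace.

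\emph{Step 3 (finite, positive codimension; time-$T$ maps).} For finiteness one counts the invariant distributions of Sobolev order in the bounded window $(k{+}1,k{+}2]$: for the horocycle flow the only invariant distributions of order $>1$ are those supported on discrete-series representations $H_n$ with integer $n\ge 2$, each occurring with finite multiplicity in $L^2(\Gamma\backslash\PSL(2,\R))$, so the order lies in $(k{+}1,k{+}2]$ only for $n=k{+}2$ and the extra distributions form a finite-dimensional space (all principal- and complementary-series invariant distributions and the finitely many harmonic-form ones have order $\le 1$); for a generic Kochergin flow the distributions $\mathcal D_{i,j}$ have order $j+1-\lambda_i$ with $i\in\{2,\dots,2g-1\}$ and $-1<\lambda_i<1$, so order $\le N$ forces $j\le N-1$ and they are finite in number for every $N$. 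Hence $\mathcal M_k/\mathcal M_{k+1}$ is finite-dimensional. Positivity of the codimension follows from Lemma~\ref{lemma:CE_Horo} (horocycle, $f$ real-analytic) and Lemma~\ref{lemma:CE_Koch} (generic Kochergin): there is a smooth $f_k$ whose cohomological equation has a solution in $C^0(M)\cap W^{k}(M)\setminus W^{k+1}(M)$ (resp.\ in $C^0(M)\setminus C^1(M)$ for the first assertion); taking $g\equiv 1$ and $f=1+\varepsilon f_k$ with $\varepsilon>0$ small enough that $f>0$, the pair $(f,g)$ is $C^0\cap W^{k}$-conjugate but not $C^0\cap W^{k+1}$-conjugate, because by unique ergodicity the coboundary primitive is unique up to a constant, by Step~1 a conjugacy of higher regularity would come from a primitive of $\varepsilon f_k\circ\psi$ of that regularity, and the smooth maps $\psi$ of the essential centralizer preserve the Sobolev-order stratification of invariant distributions. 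For the time-$T$ map assertion one invokes the last clause of Theorem~\ref{thm:Ratner_rig}: an isomorphism of the time-$T$ maps forces $\phi^f_\R$ and $\phi^g_\R$ to be isomorphic as flows, with the same cohomological conclusion, and the regularity bookkeeping of Step~1 applies verbatim since a $C^l$ conjugacy of the time-$T$ maps is, up to smooth maps, a shift along $\phi_\R$-orbits.

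\emph{Main obstacle.} I expect the real work to be the regularity bookkeeping of Steps~1--2: showing that $C^l$-conjugacy of the \emph{time changes} (as opposed to mere measurable conjugacy, which already carries $C^\infty$ regularity along the flow direction) genuinely forces the \emph{transverse} regularity of the primitive encoded by the invariant distributions --- i.e.\ matching the geometric notion of $C^l$-conjugacy with the weighted-Sobolev hierarchy, including handling the $C^1$-versus-$W^1$ discrepancy in the first assertion --- and, in the horocycle case, checking that the finitely many sheets arising from the nontrivial essential centralizer do not affect the codimension count.
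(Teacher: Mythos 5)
Your proposal takes essentially the same route as the paper --- reduction to the scalar cohomological equation over the base flow (Ratner's rigidity theorem for horocycle, the orbit-foliation/Nielsen--Thurston argument of Lemma~\ref{lemma:Koch_centr} for Kochergin), followed by Lemma~\ref{lemma:cocycle_rigidity}, Lemma~\ref{lemma:CE_Horo} and Lemma~\ref{lemma:CE_Koch} to locate the obstructions as finitely many invariant distributions of Sobolev order in a bounded window. The paper's written proof is considerably terser than yours and does not explicitly carry out the regularity bookkeeping you do in Steps~1--2 (tracking that a $C^l$ conjugacy of the time changes, which factors through a smooth element of the essential centralizer and a shift along $\phi_\R$-orbits, forces the transfer function $u$ to be $C^l$), nor the explicit codimension count of Step~3, nor the device of taking $g\equiv 1$ and $f = 1+\varepsilon f_k$ to ensure the nontrivial centralizer element $\psi$ does not change the cohomology class; these are gaps in the paper's exposition that your proposal correctly fills. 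Two small caveats worth flagging: (a) for the Kochergin case your flow-box inverse-function argument in Step~1 must be localized away from the singular set $\Sigma$, which is legitimate since $f,g$ are supported in $M\setminus\Sigma$ and the conjugacy preserves singular leaves, but deserves a line; and (b) for time changes (as opposed to the horocycle flow itself) the essential centralizer is guaranteed by \cite{KL17} only to be discrete and at most countable rather than finite, so the union in Step~3 should more cautiously be over the finitely many normalizer elements $\psi_C$ produced by Theorem~\ref{thm:Ratner_rig}, which is what the codimension count actually uses.
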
 
 \begin{proof} 
 For horocycle flows, Ratner's rigidity theorem  \ref{thm:Ratner_rig} states that  if the time-changes $\phi^f_\R$ and $\phi^g_\R$ are measurably isomorphic, or equivalently if for a given $T>0$ the time-$T$ maps $\phi^f_T$ and $\phi^g_T$ are measurably isomorphic, then  there exists 
 an automorphism $\psi:M\to M$ given by an element of the normalizer of the lattice, such that $f\circ \psi -g$ is a measurable coboundary. By Lemma \ref{lemma:cocycle_rigidity}
$f\circ \psi -g$ is a continuous coboundary,  and by Lemma~\ref{lemma:CE_Horo} there exist smooth functions $f, g\in C^\infty(M)$ such that the primitive  of $f -g$, hence the primitive of $f\circ \psi-g$,  is continuous, but does not belong to $W^1$.
 
 \smallskip
 For Kochergin flows, if the time changes $\phi^f_\R$ and $\phi^g_\R$ are $C^0$ conjugate, then the conjugating homeomorphism $h:M\to M$ maps the
 orbit foliation onto itself, hence by Lemma~\ref{lemma:Koch_centr} we can assume that $h$ is isotopic to the identity, hence it fixes the singularities, the
 singular leaves, and so all leaves of the orbit foliation.  It follows that if $\phi^f_\R$ and $\phi^g_\R$ are $C^1$ conjugate, then in particular 
 $f-g$ is a $C^1$ coboundary, but by Lemma~\ref{lemma:CE_Koch} there exist smooth functions $f, g\in C^\infty(M)$ such that the primitive  of $f -g$ is continuous, but 
 does not belong to $W^1$.
 
\smallskip
 By a similar argument, we can derive from \ref{lemma:CE_Horo}  and Lemma~\ref{lemma:CE_Koch}  that, for all $k\in \N$ there exist pairs $(f, g)$
  of smooth functions such that $\Phi^f_\R$ and $\Phi^g_\R$ are $C^0$ conjugate, and the conjugacy belongs to the Sobolev space $W^k$, but not to
  its subspace $W^{k+1}$. In fact, the space of smooth pairs $(f,g)$ such that the conjugacy is of class $W^{k+1}$ has positive, but finite, codimension
  in the space of pairs for which skew-product flows are $C^0$ conjugated and the conjugacy is of class $W^k$. 
 
 \end{proof} 
 
  \begin{theorem} 
  Let $\phi^f_\R$  be smooth time changes of a given Diophantine Heisenberg nilflow $\phi_\R$. If the flows $\phi^f_\R$ and $\phi_\R$ are $C^0$ conjugate, then 
  they are $C^\infty$ conjugate.
   \end{theorem}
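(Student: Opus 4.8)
The plan is to reduce the hypothesised topological conjugacy to a cohomological equation over the nilflow $\phi_\R$ itself, and then to appeal to the regularity theory for that equation. So let $h$ be a $C^0$ conjugacy, with $h\circ\phi^f_t=\phi_t\circ h$ for all $t\in\R$. First I would note that a Diophantine Heisenberg nilflow projects onto a minimal linear flow on the torus $\T^2$, hence has non-trivial point spectrum and in particular is not weakly mixing; since weak mixing is an invariant of measurable conjugacy and $h$ intertwines the (unique) invariant measures of the two flows, $\phi^f_\R$ is not weakly mixing either. Invoking the weak mixing dichotomy for Heisenberg nilflows proved above — which, under the Diophantine hypothesis, rests on the deviation bounds for ergodic integrals coming from renormalisation together with the cohomological regularity statement of Lemma~\ref{lemma:CE_Heis} — I would conclude that $f$ is smoothly cohomologous to a constant: there exist $c>0$ and $v\in C^\infty(M)$ with $f=c+Xv$, and necessarily $c=\int_M f\,d\mu$ for the Haar measure $\mu$ on $M$.

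Next I would feed this into Lemma~\ref{lemma:time_changes1} with $g\equiv c$. Since the cohomological equation
$$
u\circ\phi_t-u=\int_0^t (f-c)\circ\phi_s\,ds=v\circ\phi_t-v,\qquad t\in\R,
$$
is solved by $u=v\in C^\infty(M)$, the time change $\phi^f_\R$ is $C^\infty$ conjugate to the constant time change $\phi^c_\R$, which is just the Heisenberg nilflow generated by $c^{-1}X$. Composing this smooth conjugacy with $h$, I see that $\phi^c_\R$ is $C^0$ conjugate to $\phi_\R$, so it remains to check that a constant time change of a Diophantine Heisenberg nilflow that is topologically conjugate to it is in fact smoothly conjugate to it.

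For this last point I would pass to the maximal equicontinuous factors: the linear flow with rotation vector $\omega/c$ would be $C^0$ conjugate to the linear flow with rotation vector $\omega$ on $\T^2$, and since any self-homeomorphism of $\T^2$ is homotopic to a matrix $A\in\mathrm{GL}(2,\Z)$ whose linear part must carry one rotation vector to the other, this forces $A\omega=c\,\omega$ (up to sign). Then the algebraic automorphism of $\T^2$ induced by $A$ already conjugates the two linear flows — so $c$ must be $1$ or a unit in a real quadratic order — and, lifting $A$ to an algebraic automorphism $\widetilde A$ of the Heisenberg nilmanifold $M$ that respects the central fibration, $\widetilde A$ intertwines $c^{-1}X$ with $X$; composing $\widetilde A$ with the conjugacy of the previous step produces a $C^\infty$ conjugacy between $\phi^f_\R$ and $\phi_\R$.

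I expect the main obstacle to be the very first step: converting a purely topological conjugacy of the time change into the algebraic conclusion that $f$ is smoothly cohomologous to a constant. This is exactly the weak mixing dichotomy, and its proof carries the hard analytic content — one needs the matching upper and lower bounds on ergodic integrals of the Heisenberg nilflow, and the fact (Lemma~\ref{lemma:CE_Heis}) that an $L^2$ solution of the cohomological equation over a Diophantine Heisenberg nilflow is automatically $C^\infty$. By contrast, identifying and disposing of the normalising constant $c$ in the last step is routine, using only the classical rigidity of Diophantine toral translations.
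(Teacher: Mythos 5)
Your proof is essentially the same as the paper's: invoke the weak-mixing dichotomy for smooth time changes of Diophantine Heisenberg nilflows to deduce that $f$ is smoothly cohomologous to a constant, and then convert that smooth coboundary relation into a smooth conjugacy via Lemma~\ref{lemma:time_changes1}. The paper disposes of the matter in one sentence at this point, implicitly taking the normalizing constant to be $1$, and stops. You go one step further by observing that the dichotomy only gives $f = c + Xv$ with $c = \int_M f\,d\mu$ not \emph{a priori} equal to $1$, so that one must still rule out (or absorb) the constant rescaling $\phi^c_\R = \phi_{\cdot/c}$. This is a legitimate point that the paper's terse proof does not address, and your resolution via the point spectrum of the toral factor is the right idea: equality of the spectra $\omega\Z^2$ and $(\omega/c)\Z^2$ forces $c=1$ unless the slope of $\omega$ is a quadratic irrational, in which case $c$ could be a unit and the conjugacy is realised by an algebraic automorphism. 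Two small cautions. First, the claim that an arbitrary $A\in\mathrm{GL}(2,\Z)$ lifts to an automorphism $\widetilde A$ of the Heisenberg nilmanifold is not automatic --- the lift must preserve the lattice $\Gamma$, and on the center it acts by $\det A$ --- so that step deserves a line of justification, though it is standard. Second, you attribute the weak-mixing dichotomy to ``deviation bounds together with Lemma~\ref{lemma:CE_Heis},'' but the paper's proof of the dichotomy does not actually use Lemma~\ref{lemma:CE_Heis}; it uses the eigenfunction equation, an intermediate-value argument, and the matching upper/lower deviation bounds on ergodic integrals from renormalization. These are minor, and the overall structure of your argument matches the paper's and is, if anything, more complete on the normalization issue.
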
 
  \begin{proof} 
 Since the flows $\phi^f_\R$ and $\phi_\R$ are topologically conjugate, it follows that the time-change $\phi^f_\R$ is not weakly mixing, hence by  Lemma
 the time-change is trivial, and in fact $f$  is smoothly cohomologous to a constant, hence $\phi^f_\R$ is smoothly conjugate to $\phi_\R$.
   \end{proof}
 
 We remark that we don't know whether  if non-trivial time changes $\phi^f_\R$ and $\phi^g_\R$ are $C^0$ conjugate, then $f-g$ is a coboundary,
 hence we have no rigidity statement, for time changes of Heisenberg nilflows, even within the class of time changes.  This motivates the following question:
 \begin{question}Assume that for any pair of smooth functions  $f,g$ on $M$, the corresponding time changes $\phi^f_\R$ and $\phi^g_\R$ of a (Diophantine) Heisenberg 
 nilflow  $\phi_\R$ are $C^{0}$ conjugate. Is $f-g$ a (smooth) coboundary?
 \end{question}

 \subsection{Skew-product flows}
Similar statements hold for skew-product flows and make precise Theorems \ref{thm:main1b} and  \ref{thm:main2b}.

 \begin{theorem}  Let $\Phi^f_\R$ and $\Phi^g_\R$ be skew-product flows over the horocycle flows or a generic Kochergin flow. 
 The set of pairs of functions $(f, g)$ such that the flows $\Phi^f_\R$  and $\Phi^g_\R$ are $C^1$ conjugate is a  positive, but 
 finite, codimension subspace of the space of pair of  functions $(f,g)\in C^\infty(M)^2$ such that $\Phi^f_\R$ and $\Phi^g_\R$ are $C^0$ conjugate. 
 
 In addition, for all
 $k\in \N$, the set of of pairs of functions $(f, g)$  such that the conjugacy between the flows $\Phi^f_\R$ and $\Phi^g_\R$  is 
 of class $C^0 \cap  W^{k+1}$  is a  positive, but finite, codimension subspace of the space of  pair functions $(f,g)\in C^\infty(M)^2$ such that the conjugacy 
 between the flows $\Phi^f_\R$ and $\Phi^g_\R$  is  of class $C^0 \cap W^{k}$.
  \end{theorem}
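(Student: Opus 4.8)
The plan is to rerun the proof of the time-change theorem just established, with the cohomology--conjugacy dictionary furnished by Lemmas~\ref{lemma:conj_skew_shifts1} and~\ref{lemma:conj_skew_shifts2} taking the place of Lemma~\ref{lemma:time_changes1} and Ratner's rigidity theorem. First I would check that the base flows satisfy the hypotheses of Lemma~\ref{lemma:conj_skew_shifts2}. For the horocycle flow on $M=\Gamma\backslash PSL(2,\R)$ one verifies that $\mathcal Z_U=\R U$: writing a vector field commuting with $U$ in the frame $\{U,A,V\}$ of $\mathfrak{sl}(2,\R)$, the structure relations and minimality force its coefficients to be constant and then to vanish off the $U$-direction. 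One also verifies expansiveness in the sense of Definition~\ref{def:expansive}: two points not on a common horocycle orbit differ, after sliding along the orbit, by a group element with a nonzero $A$- or $V$-component, and conjugation by $u_{-t}$ sends such an element polynomially to infinity, uniformly after descending to the compact quotient. For a generic Kochergin flow on a surface of genus $\ge 2$ the centralizer is likewise reduced to the flow direction, and I would either verify expansiveness directly from the geometry of the translation surface (parallel non-singular orbits are eventually separated at the saddles, and every such orbit is dense) or else bypass Definition~\ref{def:expansive} by observing that a topological conjugacy of skew products projects to a homeomorphism of $M$ preserving the orbit foliation and invoking Lemma~\ref{lemma:Koch_centr}.

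With the hypotheses verified, Lemmas~\ref{lemma:conj_skew_shifts1} and~\ref{lemma:conj_skew_shifts2} give the dictionary I need: for pairs $(f,g)$ with both skew products (quasi-)minimal, $\Phi^f_\R$ and $\Phi^g_\R$ are conjugate by a homeomorphism of class $C^0\cap W^l$ if and only if, for one of finitely many automorphisms $\bar h$ of the base flow, the cohomological equation $Xu=f\circ\bar h-g$ admits a solution $u\in C^0(M)\cap W^l(M)$. For the horocycle flow the relevant $\bar h$ come, by Ratner's description of the essential centralizer \cite{Rat82}, from the finite group $N(\Gamma)/\Gamma$ of real-analytic automorphisms induced by the normalizer of the lattice; composing $\bar h$ with a time-$s$ map of the flow replaces $f\circ\bar h$ by a function differing from it by a smooth coboundary, hence does not affect the solvability class, so one may take $\bar h$ in this finite group. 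For a generic Kochergin flow, by Lemma~\ref{lemma:Koch_centr}, $\bar h$ may be taken to be the identity. The (few) degenerate pairs for which one skew product fails to be (quasi-)minimal --- e.g.\ when $f$ is a coboundary --- are treated directly.

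It then remains to analyse the decreasing chain $\mathcal K^{(l)}:=\{\,F\in C^\infty(M)\ :\ Xu=F\ \text{has a solution}\ u\in C^0(M)\cap W^l(M)\,\}$. By the results recalled in \S\ref{sec:cohom} --- \cite{FlaFo03} for horocycle flows, \cite{Fo07} with the deviation estimates of \cite{Fo02} for Kochergin flows --- membership of a smooth $F$ in $\mathcal K^{(l)}$ is governed by the vanishing of $F$ on the $X$-invariant distributions of Sobolev order $\le l+1$, so $\mathcal K^{(l+1)}\subseteq\mathcal K^{(l)}$ and the quotient $\mathcal K^{(l)}/\mathcal K^{(l+1)}$ has dimension at most the (finite) number of invariant distributions whose Sobolev order lies in $(l+1,l+2]$: for the horocycle flow these are exactly the distributions attached to the discrete series of parameter $l+2$, whose multiplicity in $L^2(\Gamma\backslash PSL(2,\R))$ is finite and, on a surface of genus $\ge 2$, positive; for the Kochergin flow they are the finitely many $\mathcal D_{i,j}$ with $j+1-\lambda_i\in(l+1,l+2]$. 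That each such quotient is genuinely nonzero is the content of Lemma~\ref{lemma:CE_Horo} (resp.\ Lemma~\ref{lemma:CE_Koch}). Pulling the chain back along the linear surjection $(f,g)\mapsto f\circ\bar h-g$ and taking the finite union over $\bar h$ yields the second assertion. For the first assertion I would sandwich: by Sobolev embedding a pair admitting a $C^0\cap W^{k_0}$ conjugacy with $k_0>1+\tfrac12\dim M$ admits a $C^1$ conjugacy, while any $C^1$ conjugacy is in particular a $C^0\cap W^1$ conjugacy; since $\mathcal K^{(k_0)}$ has finite codimension in $\mathcal K^{(0)}$ (a finite sum of finite codimensions) and $\mathcal K^{(1)}$ is a proper subspace of $\mathcal K^{(0)}$ by Lemma~\ref{lemma:CE_Horo}/\ref{lemma:CE_Koch}, the set of $C^1$-conjugate pairs lies between two sets of the asserted type and therefore has positive, finite codimension in the set of $C^0$-conjugate pairs.

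The steps I expect to demand the most care are the verification of expansiveness in Lemma~\ref{lemma:conj_skew_shifts2}, straightforward for the horocycle flow but for Kochergin flows needing either the translation-surface argument above or the reduction via orbit-foliation preservation and Lemma~\ref{lemma:Koch_centr}, and the bookkeeping behind the finite-codimension count, which for the horocycle flow rests on the finiteness and positivity of discrete-series multiplicities, and for Kochergin flows on the formula $\mathcal O(\mathcal D_{i,j})=j+1-\lambda_i$ together with the finiteness of the Kontsevich--Zorich spectrum. Matching the Sobolev-order thresholds of the solution operator in Lemmas~\ref{lemma:cocycle_rigidity}, \ref{lemma:CE_Horo} and~\ref{lemma:CE_Koch} to the exact $W^k$-regularity of the conjugacy supplied by Lemmas~\ref{lemma:conj_skew_shifts1}--\ref{lemma:conj_skew_shifts2} is then routine, provided one keeps track of the arbitrarily small losses in Sobolev order appearing in those statements.
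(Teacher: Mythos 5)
Your proposal follows essentially the same route as the paper's proof: Lemma~\ref{lemma:conj_skew_shifts1} to pass from cohomology to conjugacy, Lemma~\ref{lemma:conj_skew_shifts2} plus expansiveness to pass back, Ratner's description of the essential centralizer (resp.\ Lemma~\ref{lemma:Koch_centr}) to control the base automorphism $\bar h$, and Lemmas~\ref{lemma:CE_Horo}/\ref{lemma:CE_Koch} together with the Sobolev-order filtration of the invariant distributions to get the positive, finite codimension at each step of the $W^k$-chain. You actually fill in several details the paper leaves implicit --- the explicit computation that $\mathcal Z_U=\R U$ via the $\mathfrak{sl}(2,\R)$ structure relations and minimality, the reduction of $\bar h$ to the finite group $N(\Gamma)/\Gamma$ modulo time-$s$ maps, the treatment of degenerate pairs where a skew product fails to be (quasi-)minimal, and the Sobolev-embedding sandwich to go from $W^k$-regularity to $C^1$-regularity of the conjugacy. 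One small caveat: your proposed ``alternative'' for Kochergin flows --- bypassing Definition~\ref{def:expansive} by noting that a topological conjugacy of the skew products projects to a homeomorphism of $M$ --- does not actually circumvent expansiveness, since it is precisely expansiveness that guarantees $h_M(x,\theta)$ is, up to the centralizer, independent of $\theta$ and hence descends to the base; you would still need the translation-surface separation argument. But since you also present that argument as your primary route, this is a secondary remark rather than a gap.
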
 
  \begin{proof}
  By Lemma \ref{lemma:CE_Horo}  and Lemma~\ref{lemma:CE_Koch} on the cohomological equations for the horocycle flows and Kochergin flows,
  respectively, there exists $f, g \in C^\infty(M)$, and $f,g \in C^\infty_0(M\setminus \Sigma)$ in the Kochergin case, such that the cohomological equation 
  for $f-g$   has a $C^0$ solution, but no $W^1$ solution. Indeed, it follows from the argument that the pairs $(f,g)$ such that $f-g$ is a $C^0$ coboundary
  has countable codimension (vanishing of all horocycle invariant distributions from the principal and complementary series, and from discrete series
  components of parameter $n=1$) for the horocycle, and finite codimension (equal to the genus) for Kochergin flows.   By Lemma \ref{lemma:conj_skew_shifts1}
  for such pairs $(f,g)$ the skew-product flows $\Phi^f_\R$ and $\Phi^g_\R$ are $C^0$ conjugate.
  
  Conversely,  since horocycle flows and Kochergin flows are expansive,  it follows from~\ref{lemma:conj_skew_shifts2} that for any $C^0$ conjugacy of
  the flows  $\Phi^f_\R$ and $\Phi^g_\R$ there exists an automorphism $\bar h:M\to M$ of the flow $\phi_\R$ such that the conjugacy comes from a $C^0$ 
  transfer function for $f \circ \bar h-g$.  By Ratner's rigidity theorem \cite{Rat82}, Cor. 2,  the essential centralizer of the horocycle flow coincides with the normalizer
  of the lattice $\Gamma$ such that $M := \Gamma \backslash PSL(2, \R)$. Since the action of the normalizer preserves the representation types and the 
  space of horocycle invariant distributions, we can assume that the normalizer is trivial.  Similarly, by Lemma \ref{lemma:Koch_centr}
  a generic Kochergin flow has a trivial essential centralizer.  It follows that the smooth functions $f \circ \bar h-g$ and $f-g$ differ by a smooth coboundary, 
  hence  $f \circ \bar h-g$ and $f-g$ has primitives of exactly the same regularity.
  
  Since there exists pairs $(f,g)$ such that $f-g$ has a $C^0$
  transfer function which is not $W^1$ (hence not $C^1$), it follows that there exist skew-product flows which are $C^0$, but not $C^1$ conjugate.   In fact,
  the set of pairs $(f,g)$ such that $\Phi^f_\R$ and $\Phi^g_\R$ are $C^1$ conjugate has positive, but finite, codimension within the space of pairs such that 
  $\Phi^f_\R$ and $\Phi^g_\R$ are $C^0$ conjugate. 
  
  By a similar argument, we can derive from \ref{lemma:CE_Horo}  and Lemma~\ref{lemma:CE_Koch}  that, for all $k\in \N$ there exist pairs $(f, g)$
  of smooth functions such that $\Phi^f_\R$ and $\Phi^g_\R$ are $C^0$ conjugate, and the conjugacy belongs to the Sobolev space $W^k$, but not to
  its subspace $W^{k+1}$. In fact, the space of smooth pairs $(f,g)$ such that the conjugacy is of class $W^{k+1}$ has positive, but finite, codimension
  in the space of pairs for which skew-product flows are $C^0$ conjugated and the conjugacy is of class $W^k$. 
 \end{proof}
 
 For skew-products over Heisenberg nilflows, we have the following result 
 
  \begin{theorem} 
  Let $\Phi^f_\R$ and $\Phi^g_\R$ be smooth skew-product flows over a given Diophantine Heisenberg nilflow, defined by a cocycle with values in the circle. If the flows $\Phi^f_\R$ and 
  $\Phi^g_\R$ are $C^0$ conjugate, then they are $C^\infty$ conjugate.
   \end{theorem}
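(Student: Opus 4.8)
The plan is to follow the scheme used for skew-products over horocycle and Kochergin flows, replacing Ratner's rigidity theorem by the centralizer rigidity of Heisenberg nilflows (Theorem~\ref{thm:Heis_centr}) and the converse cohomological estimates by the Diophantine regularity result of Lemma~\ref{lemma:CE_Heis}. First I would note that a Diophantine Heisenberg nilflow $\phi_\R$ on $M=\Gamma\backslash H_3$ is uniquely ergodic, hence minimal, and expansive in the sense of Definition~\ref{def:expansive}: its centralizer $\mathcal Z_X$ in the Lie algebra is spanned by the generator $X$ and the central field $Z$, and two points not lying on a common $\exp(\mathcal Z_X)$-orbit are pushed apart because the central component of the relative displacement, which equals $[X,V]$ for the transverse part $V$, drifts linearly in time. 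Assuming for now that $\Phi^f_\R$ and $\Phi^g_\R$ are minimal, Lemma~\ref{lemma:conj_skew_shifts2} applies to any $C^0$ conjugacy between them and produces a $C^0$ automorphism $\bar h\colon M\to M$ of $\phi_\R$ together with a continuous solution $u\in C^0(M)$ of the cohomological equation $u\circ\phi_t-u=\int_0^t(f\circ\bar h-g)\circ\phi_s\,ds$.

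The crux is a two-step regularity upgrade. Since $\bar h$ is a continuous element of the centralizer of $\phi_\R$, Theorem~\ref{thm:Heis_centr} forces it to be the projection to $M=\Gamma\backslash H_3$ of left translation by a normalizer element $g_0\in H_3$; in particular $\bar h$ is affine, hence $C^\infty$, so that $f\circ\bar h\in C^\infty(M)$ is again a positive smooth function. The continuous primitive $u$ lies a fortiori in $L^2(M)$, so Lemma~\ref{lemma:CE_Heis}, applied to the smooth coboundary $f\circ\bar h-g$ over the Diophantine nilflow, upgrades it to $u\in C^\infty(M)$. By Lemma~\ref{lemma:conj_skew_shifts1}, with $f$ replaced by $f\circ\bar h$, the flows $\Phi^{f\circ\bar h}_\R$ and $\Phi^g_\R$ are $C^\infty$ conjugate; and since $\bar h$ commutes with $\phi_\R$, the $C^\infty$ diffeomorphism $(x,\theta)\mapsto(\bar h x,\theta)$ conjugates $\Phi^{f\circ\bar h}_\R$ to $\Phi^f_\R$. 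Composing the two conjugacies yields a $C^\infty$ conjugacy between $\Phi^f_\R$ and $\Phi^g_\R$.

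It then remains to remove the minimality assumption. If the $C^0$-conjugate flows $\Phi^f_\R$ and $\Phi^g_\R$ are not minimal, then, the base being uniquely ergodic, non-minimality of the circle extension forces $ng-c$ to be a measurable coboundary for some $n\neq0$ and some constant $c$, hence by Lemma~\ref{lemma:CE_Heis} a smooth coboundary; thus $f$ and $g$ are smoothly cohomologous to constants and $\Phi^f_\R$, $\Phi^g_\R$ are smoothly conjugate to products of $\phi_\R$ with a linear flow on the circle fibre. For such products a $C^0$ conjugacy is again a skew-product conjugacy, and one checks directly, using that a nonzero constant is not a coboundary for $\phi_\R$ and comparing the (discrete) eigenvalue data of topologically conjugate flows, that it is smooth.

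The main obstacle is exactly the regularity upgrade of $\bar h$: Lemma~\ref{lemma:conj_skew_shifts2} only provides a continuous automorphism, and unless $f\circ\bar h$ is known to be smooth the Diophantine regularity Lemma~\ref{lemma:CE_Heis} cannot be invoked. It is the rigidity of the $C^0$ centralizer of Heisenberg nilflows (Theorem~\ref{thm:Heis_centr}) that closes this gap; note that, in contrast with the horocycle case, this centralizer contains the one-parameter central subgroup, so $\bar h$ need not be a time-$t$ map of $\phi_\R$ alone. Verifying expansiveness of the nilflow and disposing of the non-minimal circle extensions are the remaining, more routine points.
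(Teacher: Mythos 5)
Your proof follows essentially the same route as the paper: apply Lemma~\ref{lemma:conj_skew_shifts2} using expansiveness of the Heisenberg nilflow to extract a $C^0$ automorphism $\bar h$ and a continuous transfer function, upgrade $\bar h$ to a smooth (affine) automorphism via Theorem~\ref{thm:Heis_centr}, upgrade the primitive to $C^\infty$ via Lemma~\ref{lemma:CE_Heis}, and close with Lemma~\ref{lemma:conj_skew_shifts1}. You are in fact more careful than the paper on two points it passes over silently: the minimality hypothesis built into Lemma~\ref{lemma:conj_skew_shifts2}, which you discharge by a separate analysis when the circle extensions are not minimal, and the need to compose with $(x,\theta)\mapsto(\bar h x,\theta)$ to pass from a $C^\infty$ conjugacy between $\Phi^{f\circ\bar h}_\R$ and $\Phi^g_\R$ to one between $\Phi^f_\R$ and $\Phi^g_\R$.
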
 
  \begin{proof} 
  Since any Heisenberg nilflow is expansive, by Lemma~\ref{lemma:conj_skew_shifts2} it follows that if $\Phi^f_\R$ and $\Phi^g_\R$  are $C^0$- conjugate, then there exists
  a $C^0$ automorphism $\bar h:M\to M$ of the Heisenberg nilflows such that the function $f\circ \bar h-g$ is a $C^0$ coboundary.  By Theorem \ref{thm:Heis_centr}, the automorphism $\bar h$ is given by the normalizer of the lattice, hence it is smooth and preserves the invariant distributions of the Heisenberg nilflow.
  By Lemma\ref{lemma:CE_Heis}  on the cohomological equation for Diophantine Heisenberg nilflows, if $f \circ \bar h -g$ is a smooth function which is a 
  $C^0$ coboundary, then it is a $C^\infty$ coboundary, that is, the primitive $u\in C^\infty(M)$. By ~\ref{lemma:conj_skew_shifts1} it follows that $\Phi^f_\R$ and 
  $\Phi^g_\R$ are $C^\infty$- conjugate.
  \end{proof}

 \end{document}